\pgfplotsset{width=8cm,height=4cm,compat=1.9}
\definecolor{darkblue}{rgb}{0.0, 0.0, 0.55}
\definecolor{bordeaux}{rgb}{0.34, 0.01, 0.1}
\newtheorem{theorem}{Theorem}
\newtheorem{lemma}[theorem]{Lemma}
\newtheorem{conjecture}[theorem]{Conjecture}
\newtheorem{definition}[theorem]{Definition}
\newtheorem{example}[theorem]{Example}
\newtheorem{remark}[theorem]{Remark}
\newenvironment{proof}{\noindent{\em Proof:}}{$\Box$~\\}
\def\Z{{\mathbb{Z}}}
\def\Q{{\mathbb{Q}}}
\def\R{{\mathbb{R}}}
\def\N{{\mathbb{N}}}
\def\x{{\mathbf{x}}}
\def\a{{\boldsymbol{\alpha}}}
\def\b{{\boldsymbol{\beta}}}
\def\gg{{\boldsymbol{\gamma}}}
\def\ba{{\boldsymbol{a}}}
\def\bv{{\boldsymbol{v}}}
\def\bu{{\boldsymbol{u}}}
\def\bw{{\boldsymbol{w}}}
\def\A{{\mathscr{A}}}
\def\B{{\mathscr{B}}}
\def\CC{{\mathscr{C}}}
\def\TT{{\mathscr{T}}}
\def\supp{\hbox{\rm{supp}}}
\def\SONC{\hbox{\rm{SONC}}}
\def\int{\hbox{\rm{int}}}
\def\New{\hbox{\rm{New}}}
\def\Conv{\hbox{\rm{conv}}}
\DeclareMathOperator{\bigo}{\mathcal{O}}
\DeclareMathOperator{\bigotilde}{\widetilde{\mathcal{O}}}
\newcommand{\bm}[1]{\mbox{\boldmath{$#1$}}}
\newcommand{\revision}[1]{{{\color{black}#1}}}
\begin{document}
\begin{frontmatter}
\title{SONC Optimization and Exact Nonnegativity Certificates via Second-Order Cone Programming}

\author{Victor Magron}
\address{CNRS LAAS, 7 avenue du colonel Roche 31400 Toulouse; Institute of Mathematics, 31400 Toulouse France.}
\ead{vmagron@laas.fr}
\ead[url]{https://homepages.laas.fr/vmagron}

\author{Jie Wang}
\address{CNRS LAAS, 7 avenue du colonel Roche 31400 Toulouse France}
\ead{jwang@laas.fr}
\ead[url]{https://wangjie212.github.io/jiewang}

\begin{abstract}
The second-order cone (SOC) is a class of simple convex cones and optimizing over them can be done more efficiently than with semidefinite programming. 
It is interesting both in theory and in practice to investigate which convex cones admit a representation using SOCs, given that they have a strong expressive ability. 
In this paper, we prove constructively that the cone of sums of nonnegative circuits (SONC) admits a SOC representation. 
Based on this, we give a new algorithm for unconstrained polynomial optimization via SOC programming.
We also provide a hybrid numeric-symbolic scheme which combines the numerical procedure with a rounding-projection algorithm to obtain exact nonnegativity certificates.
Numerical experiments demonstrate the efficiency of our algorithm for polynomials with fairly large degree and number of variables.
\end{abstract}

\if{
\begin{CCSXML}
<ccs2012>
<concept>
<concept_id>10002950.10003714.10003716.10011138.10010042</concept_id>
<concept_desc>Mathematics of computing~Semidefinite programming</concept_desc>
<concept_significance>500</concept_significance>
</concept>
<concept>
<concept_id>10010147.10010148.10010149.10010150</concept_id>
<concept_desc>Computing methodologies~Algebraic algorithms</concept_desc>
<concept_significance>500</concept_significance>
</concept>
<concept>
<concept_id>10010147.10010148.10010149.10010161</concept_id>
<concept_desc>Computing methodologies~Optimization algorithms</concept_desc>
<concept_significance>500</concept_significance>
</concept>
</ccs2012>
\end{CCSXML}

\ccsdesc[500]{Mathematics of computing~Semidefinite programming}
\ccsdesc[500]{Computing methodologies~Algebraic algorithms}
\ccsdesc[500]{Computing methodologies~Optimization algorithms}
}\fi

\begin{keyword}
sum of nonnegative circuit polynomials, second-order cone representation, second-order cone programming, polynomial optimization, sum of binomial squares, rounding-projection algorithm, exact nonnegativity certificate
\end{keyword}
\end{frontmatter}

\section{Introduction}
A {\em circuit polynomial} is of the form
$\sum_{\a\in\TT}c_{\a}\x^{\a}-d\x^{\b}\in\R[\x]=\R[x_1,\ldots,x_n],$
where $c_{\a}>0$ for all $\a\in\TT$, $\TT\subseteq(2\N)^n$ comprises the vertices of a simplex and $\b$ lies in the relative interior of this simplex.
The set of {\em sums of nonnegative circuit polynomials (SONC)} was introduced by \cite{iw} as a new certificate of nonnegativity for sparse polynomials, which is independent of the well-known set of sums of squares (SOS). 
Another recently introduced alternative certificates by \cite{ca} are sums of arithmetic-geometric-exponentials (SAGE), which can be obtained via relative entropy programming.
The connection between SONC and SAGE polynomials has been recently studied in \cite{mu,wang,ka19}. 
It happens that SONC polynomials and SAGE polynomials are actually equivalent, as proved by \cite{wang} and \cite{mu}, and that both have a cancellation-free representation in terms of generators; see prior research by \cite{wang,mu}.
 
One of the significant differences between SONC and SOS is that SONC decompositions preserve sparsity of polynomials while SOS decompositions of sparse polynomials are not necessarily sparse; see \cite{wang}. 
The set of SONC polynomials with a given support forms a convex cone, called a {\em SONC cone}. Optimization problems over SONC cones can be formulated as geometric programs or more generally relative entropy programs. 
We refer the interested reader to \cite{lw} for the unconstrained case and to \cite{diw} for the constrained case. 
Numerical experiments for unconstrained POPs (polynomial optimization problems) in \cite{se} have demonstrated the advantage of the SONC-based methods compared to the SOS-based methods, especially in the high-degree but fairly sparse case.
Another recent framework by \cite{dressler2020global} relies on the dual SONC cone to compute lower bounds of  polynomials by means of linear programming instead of geometric programming. 
As emphasized in the numerical comparison from   \cite{dressler2020global}, there are no general guarantees that the bounds obtained with this framework are always more or less accurate than the approach based on geometric programming from \cite{se}, and the same holds for performance.

In the SOS case, there have been several attempts to exploit sparsity occurring in POPs. 
The sparse variant developed by \cite{Waki06SparseSOS} of the moment-SOS hierarchy exploits the correlative sparsity pattern among the input variables to reduce the support of the resulting SOS decompositions.
Such sparse representation results have been successfully applied in many fields, such as optimal power flow by \cite{Josz16}, 
roundoff error bounds by \cite{ma17} and recently extended to the noncommutative case by \cite{ncsparse}.
Another way to exploit sparsity is to consider patterns based on terms, rather than variables, yielding an alternative sparse variant of Lasserre's hierarchy. 
The resulting \emph{term-sparsity} SOS (TSSOS) hierarchy has been developed for polynomial optimization by \cite{tssos,tssos2,wang2020sparsejsr}, combined with correlative sparsity in \cite{cstssos}, and extended to the noncommutative case in \cite{nctssos}.

One of the similar features shared by SOS/SONC-based frameworks is their intrinsic connections with conic programming: SOS decompositions are computed via semidefinite programming and SONC decompositions via geometric programming. 
In both cases, the resulting optimization problems are solved with interior-point algorithms, thus output approximate nonnegativity certificates. 
However, one can still obtain an exact certificate from such output via hybrid numerical-symbolic algorithms when the input polynomial lies in the interior of the SOS/SONC cone.
One way is to rely on rounding-projection algorithms adapted to the SOS cone by \cite{pe} and the SONC cone by \cite{ma19}, or alternatively on perturbation-compensation schemes as in \cite{univsos,multivsos18}. 
The latter algorithms are implemented within the {\tt RealCertify} library by \cite{RealCertify}.

In this paper, we study the second-order cone representation of SONC cones. An $n$-dimensional {\em (rotated) second-order cone (SOC)} is defined as
\begin{equation*}
\mathbf{K}:=\{\ba=(a_i)_{i=1}^n\in\R^{n}\mid2a_1a_2\ge\sum_{i=3}^{n}a_i^2,a_1\ge0,a_2\ge0\}.
\end{equation*}
The SOC is well-studied and has mature solvers. Optimizing via second-order cone programming (SOCP) can be handled more efficiently than with semidefinite programming; see the work by \cite{ah} and \cite{al}. On the other hand, despite the simplicity of SOCs, they have a strong ability to express other convex cones. 
Many such examples can be found in Section 3.3 from \cite{ben2001lectures}. 
Therefore, it is interesting in theory and also important from the view of applications to investigate which convex cones can be expressed by SOCs. 

Given sets of lattice points $\A\subseteq(2\N)^n$, $\B_1\subseteq\Conv(\A)\cap(2\N)^n$ and $\B_2\subseteq\Conv(\A)\cap(\N^n\backslash(2\N)^n)$ ($\Conv(\A)$ is the convex hull of $\A$) with $\A\cap\B_1=\varnothing$, let $\SONC_{\A,\B_1,\B_2}$ be the SONC cone supported on $\A,\B_1,\B_2$ (see Definition \ref{def-cone}).  
The first main result of this paper is the following theorem.
\begin{theorem}\label{mthm}
For $\A\subseteq(2\N)^n$, $\B_1\subseteq\Conv(\A)\cap(2\N)^n$ and $\B_2\subseteq\Conv(\A)\cap(\N^n\backslash(2\N)^n)$ with $\A\cap\B_1=\varnothing$, the convex cone $\SONC_{\A,\B_1,\B_2}$ admits a SOC representation.
\end{theorem}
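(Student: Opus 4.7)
My plan is to prove Theorem~\ref{mthm} in three stages: first, decompose $\SONC_{\A,\B_1,\B_2}$ into a Minkowski sum of elementary cones, one for each pair consisting of a simplex $\TT\subseteq\A$ together with an inner lattice point $\b\in\B_1\cup\B_2$; second, identify each elementary cone with the feasibility set of a weighted AM--GM inequality in the circuit coefficients; and third, build an explicit SOC lift of that inequality by a binary-tree-of-square-roots construction. Because a finite Minkowski sum of SOC-representable sets is SOC-representable (introduce auxiliary coefficient-splitting variables $c_{\a}=\sum_{j}c_{\a}^{(j)}$ and lift each summand separately), stages~(i)--(iii) together will give the desired representation.

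Concretely, every $f\in\SONC_{\A,\B_1,\B_2}$ admits, by the cancellation-free SONC decomposition results of \cite{wang,mu}, a decomposition $f=\sum_{\b}f_{\b}$ in which each $f_{\b}=\sum_{\a\in\TT_\b}c_{\a,\b}\x^{\a}+e_{\b}\x^{\b}$ is a nonnegative circuit polynomial supported on a simplex $\TT_{\b}\subseteq\A$ with $\b$ in its relative interior. Writing $\b=\sum_{\a\in\TT_\b}\lambda_{\a}\a$ with $\lambda_{\a}\in\Q_{>0}$ summing to~$1$, the nonnegativity of $f_{\b}$ amounts to $c_{\a,\b}\ge 0$ together with $-e_{\b}\le\Theta_{f_{\b}}$ when $\b\in\B_1$ (even case) or $|e_{\b}|\le\Theta_{f_{\b}}$ when $\b\in\B_2$ (mixed case), where $\Theta_{f_{\b}}=\prod_{\a}(c_{\a,\b}/\lambda_{\a})^{\lambda_{\a}}$ is the circuit number. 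Hence each elementary cone is an affine preimage of the hypograph of a weighted geometric mean
\begin{equation*}
G_{\lambda}=\bigl\{(c_0,\ldots,c_r,t)\in\R_{\ge 0}^{r+2}\,\big|\,t\le\prod_{i=0}^{r}c_i^{\lambda_i}\bigr\},\qquad\lambda\in\Q_{>0}^{r+1},\;\sum_i\lambda_i=1.
\end{equation*}

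The technical heart of the proof is an explicit SOC lift of $G_{\lambda}$ for rational weights. Following the Ben-Tal--Nemirovski scheme (Section~3.3 of \cite{ben2001lectures}), I would write all $\lambda_i=p_i/q$ with common denominator $q$, pad if necessary to a dyadic denominator $2^N$ by appending dummy factors $c_i^{0}\equiv 1$, and realize $t\le\prod_i c_i^{p_i/2^N}$ through a depth-$N$ binary tree whose internal nodes impose the elementary rotated-SOC inequality $\sqrt{uv}\ge w$, i.e.\ $(u,v,w)\in\mathbf{K}$. Each layer halves the exponent denominators; after $N$ layers the variable $t$ is linked to $c_0,\ldots,c_r$ through a chain of rotated SOC constraints exactly describing $G_{\lambda}$. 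Concatenating such lifts over all $\b\in\B_1\cup\B_2$ and assembling them with the linear coefficient-splitting equations yields the desired SOC representation of $\SONC_{\A,\B_1,\B_2}$.

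The principal obstacle I expect is in stage~(i): producing a Minkowski decomposition that is simultaneously \emph{lossless} (every SONC polynomial is captured) and \emph{convex in the coefficient space} (so that the representation is an affine preimage rather than merely a projection of a union), without having to enumerate the (potentially exponentially many) simplices of $\A$. Selecting one fixed simplex $\TT_{\b}\subseteq\A$ for each $\b\in\B_1\cup\B_2$ via a triangulation of $\Conv(\A)$, and invoking the cancellation-free theorems of \cite{wang,mu} to justify that this single choice per $\b$ exhausts $\SONC_{\A,\B_1,\B_2}$, should close this gap, but keeping the final SOC lift polynomial in $|\A|+|\B_1|+|\B_2|$ and the bit-length of the $\lambda_{\a}$ will require careful bookkeeping of the lifted variables produced by the binary-tree construction.
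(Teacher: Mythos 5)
You have essentially reconstructed Averkov's proof of this fact (the paper cites it as \cite{abe}), which is a genuinely different route from the one taken here. You reduce each elementary circuit cone to the hypograph of a weighted geometric mean and lift that hypograph by the Ben-Tal--Nemirovski binary tree of rotated SOC constraints; the paper instead proves (Theorem \ref{sec3-thm5}) that a SONC polynomial is exactly a sum of binomial squares with rational exponents, built from $\TT$-rational mediated sets, so that each binomial square becomes a single $2\times 2$ PSD block, i.e., one $3$-dimensional rotated SOC. The trade-off is concrete: your dyadic-padding tower for a weight with denominator $p$ costs $\bigo(p)$ cones, whereas the mediated-sequence construction costs $\bigo(\log_2^2 p)$ (Lemma \ref{sec4-lm3}), which is what makes the paper's representation concise enough to drive the SOCP algorithm and the exact certification scheme. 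Both routes prove the theorem, since SOC-representability only requires a finite lift; yours is closer to the standard convex-geometry toolbox, the paper's yields the quantitative bounds of Lemmas \ref{bound-denom} and \ref{bound-triple}.

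There is, however, one genuine flaw in your stage (i): your proposed fix of selecting a \emph{single} simplex $\TT_{\b}$ per $\b$ via a triangulation of $\Conv(\A)$ is lossy, not lossless. The cancellation-free decomposition of \cite{wang,mu} (Theorem 5.5 of \cite{wang}, equation \eqref{sec2-eq1} here) writes $f$ as a sum of nonnegative circuits ranging over \emph{all} trellises $\TT\in\CC(\b)$ with $\TT\subseteq\A$ and $\b\in\Conv(\TT)^{\circ}$, and different elements of the cone genuinely require different simplices; restricting to one simplex per $\b$ is exactly the simplex-cover relaxation that the paper uses only as a heuristic (it is why $\xi_{socp}\le\xi_{sonc}$ can be strict). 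The correct resolution is simpler than what you propose: take the Minkowski sum over the full, finite family $\{(\TT,\b):\b\in\B_1\cup\B_2,\ \TT\in\CC(\b)\}$, plus the linear cone of leftover monomial squares. Finiteness of $\CC(\b)$ is all that SOC-representability needs; the potentially exponential size in $\#\A$ is a cost of this theorem in either approach and does not invalidate the statement. With that correction your argument goes through.
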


The fact that SONC cones admit a SOC characterization was firstly proven by \cite{abe} (see Theorem 17 for more details). However, Averkov's result is more theoretical. Even though Averkov's proof theoretically allows one to construct a SOC representation for a SONC cone, the construction is complicated and was not explicitly given in Averkov's paper. Our proof of Theorem \ref{mthm}, which involves writing a SONC polynomial as a sum of binomial squares with rational exponents (Theorem \ref{sec3-thm5}), is totally different from Averkov's and leads to a more concise (hence more efficient) SOC representation for SONC cones. This representation also enables us to propose a practical algorithm, based on SOCP, providing SONC decompositions for a certain class of nonnegative polynomials, which in turn yields lower bounds for unconstrained POPs. We emphasize that the number of SOCs used in the algorithm is {\em linear} in the number of terms (Lemma \ref{bound-triple}).
We test the algorithm on various randomly generated polynomials up to a fairly large size, involving  $n \sim 40$ variables and of degree $d \sim 60$. The numerical results demonstrate the efficiency of our algorithm.

The rest of this paper is organized as follows. In Section \ref{sec:prelim}, we list some preliminaries on SONC polynomials. 
In Section \ref{sec:sobs}, we reveal a key connection between SONC polynomials and sums of binomial squares by introducing the notion of $\TT$-rational mediated sets. 
By virtue of this connection, we obtain second-order cone representations for SONC cones in Section \ref{sec:socrepr}. 
In Section \ref{sec:optim}, we provide a numerical procedure 
for unconstrained polynomial optimization via SOCP. 
A hybrid numerical-symbolic certification algorithm, based on combining this numerical procedure with a symbolic rounding-projection scheme is given and analyzed in Section \ref{sec:exact}.
We evaluate the performance of the resulting certification algorithm in Section \ref{sec:benchs}.

This paper is the follow-up of our previous contribution~\citep{soncsocp}, published in the proceedings of ISSAC'20. 
The main theoretical and practical novelties are threefold: (1) we design a hybrid numeric-symbolic algorithm to compute exact nonnegativity certificates, (2) provide explicit bounds for its arithmetic complexity analysis and (3) compare its efficiency with our numerical optimization procedure.
Another more minor novelty is that we provide detailed statements for the algorithms to build rational mediated sets and simplex covers, respectively in Section \ref{sec:ratmediatedsets} and Section \ref{sec:optim}.
%

\section{Preliminaries}
\label{sec:prelim}
Let $\R[\x]=\R[x_1,\ldots,x_n]$ be the ring of real $n$-variate polynomials, and let $\R_+$ be the set of positive real numbers. For a finite set $\A\subseteq\N^n$, we denote by $\Conv(\A)$ the convex hull of $\A$. Given a finite set $\A\subseteq\N^n$, we consider polynomials $f\in\R[\x]$ supported on $\A\subseteq\N^n$, i.e., $f$ is of the form $f(\x)=\sum_{\a\in \A}f_{\a}\x^{\a}$ with $f_{\a}\in\R, \x^{\a}=x_1^{\alpha_1}\cdots x_n^{\alpha_n}$. The support of $f$ is $\supp(f):=\{\a\in \A\mid f_{\a}\ne0\}$ and the Newton polytope of $f$ is defined as $\New(f):=\Conv(\supp(f))$. For a polytope $P$, we use $V(P)$ to denote the vertex set of $P$ and use $P^{\circ}$ to denote the interior of $P$. For a set $A$, we use $\#A$ to denote the cardinality of $A$.
A polynomial $f\in\R[\x]$ which is nonnegative over $\R^n$ is called a {\em nonnegative polynomial}, or a {\em positive semi-definite (PSD) polynomial}.
The following definition of circuit polynomials was proposed by Iliman and De Wolff in \cite{iw}.
\begin{definition}
A polynomial $f\in\R[\x]$ is called a {\em circuit polynomial} if it is of the form
$f(\x)=\sum_{\a\in\TT}c_{\a}\x^{\a}-d\x^{\b}$
and satisfies the following conditions: 
\begin{enumerate}[(i)]
    \item $\TT\subseteq(2\N)^n$ comprises the vertices of a simplex;
    \item $c_{\a}>0$ for each $\a\in\TT$;
    \item $\b\in\Conv(\TT)^{\circ}\cap\N^n$.
\end{enumerate}
The support $(\TT, \b)$ satisfying $(i),(iii)$ is called a {\em circuit}.
\end{definition}

If $f=\sum_{\a\in\TT}c_{\a}\x^{\a}-d\x^{\b}$ is a circuit polynomial, then from the definition we can uniquely write
$\b=\sum_{\a\in\TT}\lambda_{\a}\a\textrm{ with } \lambda_{\a}>0 \textrm{ and } \sum_{\a\in\TT}\lambda_{\a}=1.$
We define the corresponding {\em circuit number} as $\Theta_f:=\prod_{\a\in\TT}(c_{\a}/\lambda_{\a})^{\lambda_{\a}}$. 
The nonnegativity of the circuit polynomial $f$ is decided by its circuit number alone, that is, $f$ is nonnegative if and only if either $\b\notin(2\N)^n$ and $|d|\le\Theta_f$, or $\b\in(2\N)^n$ and $d\le\Theta_f$, by Theorem 3.8 from \cite{iw}.
%
To provide a concise narrative, we refer to a nonnegative circuit polynomial by a nonnegative circuit and also view a monomial square as a nonnegative circuit.
An explicit representation of a polynomial being a {\em sum of nonnegative circuits}, or {\em SONC} for short, provides a certificate for its nonnegativity. Such a certificate is called a {\em SONC decomposition}. A polynomial that admits a SONC decomposition is called a {\em SONC polynomial}. For simplicity, we denote the set of SONC polynomials by SONC.

For a polynomial $f=\sum_{\a\in\A}f_{\a}\x^{\a}\in\R[\x]$, let
$\Lambda(f):=\{\a\in\A\mid\a\in(2\N)^n\textrm{ and }f_{\a}>0\}$
and $\Gamma(f):=\supp(f)\backslash\Lambda(f)$. Then we can write $f$ as
$f=\sum_{\a\in\Lambda(f)}c_{\a}\x^{\a}-\sum_{\b\in\Gamma(f)}d_{\b}\x^{\b}.$
For each $\b\in\Gamma(f)$, let
\begin{equation}\label{fb}
\CC(\b):=\{\TT\mid\TT\subseteq\Lambda(f)\textrm{ and }(\TT,\b)\textrm{ is a circuit}\}.
\end{equation}
As a consequence of Theorem 5.5 from \cite{wang}, if $f\in\SONC$ then it has a  decomposition
\begin{equation}\label{sec2-eq1}
f=\sum_{\b\in\Gamma(f)}\sum_{\TT\in\CC(\b)}f_{\TT\b}+\sum_{\a\in \tilde{\A}}c_{\a}\x^{\a},
\end{equation}
where $f_{\TT\b}$ is a nonnegative circuit polynomial supported on $\TT\cup\{\b\}$ and $\tilde{\A}=\{\a\in\Lambda(f)\mid\a\notin\cup_{\b\in\Gamma(f)}\cup_{\TT\in\CC(\b)}\TT\}$.

\section{SONC and sums of binomial squares}
\label{sec:sobs}
In this section, we give a characterization of SONC polynomials in terms of sums of binomial squares (SOBS) with rational exponents.

\subsection{Rational mediated sets}
\label{sec:ratmediatedsets}
A lattice point $\a\in\N^n$ is {\em even} if it is in $(2\N)^n$. For a subset $M\subseteq\N^n$, define
$\overline{A}(M):=\{\frac{1}{2}(\bv+\bw)\mid\bv\ne\bw,\bv,\bw\in M\cap(2\N)^n\}$
as the set of averages of distinct even points in $M$. A subset $\TT\subseteq(2\N)^n$ is called a {\em trellis} if $\TT$ comprises the vertices of a simplex. For a trellis $\TT$, we call $M$ a {\em $\TT$-mediated set} if $\TT\subseteq M\subseteq\overline{A}(M)\cup\TT$;  see the work on mediated sets by \cite{har,pow,re}.
\begin{theorem}\label{sec3-thm1}
Let $f=\sum_{\a\in\TT} c_{\a}\x^{\a}-d\x^{\b}\in\R[\x]$ with $d\ne0$ be a nonnegative circuit. Then $f$ is a sum of binomial squares if and only if there exists a $\TT$-mediated set containing $\b$. Moreover, suppose that $\b$ belongs to a $\TT$-mediated set $M$ and for each $\bu\in M\backslash\TT$, let us write $\bu=\frac{1}{2}(\bv_\bu+\bw_\bu)$ for some $\bv_\bu \ne\bw_\bu \in M\cap(2\N)^n$. 
Then one has the decomposition  $f=\sum_{\bu\in M\backslash\TT}(a_{\bu}\x^{\frac{1}{2}\bv_\bu}-b_{\bu}\x^{\frac{1}{2}\bw_\bu})^2+c\x^{\a}$, with $a_{\bu},b_{\bu}\in\R,c\ge0,\a\in\TT$.
\end{theorem}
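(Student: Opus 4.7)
The plan is to prove the equivalence and the explicit decomposition by adapting Reznick's analysis of agiforms to the general circuit setting. A preliminary reduction: after a diagonal rescaling $x_i \mapsto t_i x_i$, the circuit polynomial $f$ may be assumed to be an agiform $\sum_{\a\in\TT}\lambda_\a \x^\a - \x^\b$ when $|d|=\Theta_f$, or decomposed as an agiform plus a monomial square when $|d|<\Theta_f$; since a monomial square is itself a degenerate binomial square (with one coefficient equal to zero), both cases reduce to proving the result for an agiform.

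For the forward direction, I would take a SOBS decomposition $f = \sum_i (a_i \x^{\bv_i/2} - b_i \x^{\bw_i/2})^2$ with a minimal number of nontrivial squares, and set $M$ to be the union of all endpoints $\bv_i, \bw_i \in (2\N)^n$ together with all midpoints $(\bv_i+\bw_i)/2$. Then $\TT \subseteq M$ since each $c_\a > 0$ must arise from some squared endpoint, and $\b \in M$ because $-d\x^\b$ with $d \ne 0$ can only appear as a cross term. Midpoints belong to $\overline{A}(M)$ tautologically. For any even $\bv \in M \setminus \TT$ distinct from $\b$, the coefficient of $\x^\bv$ in $f$ vanishes, so the positive squared contributions $a_i^2, b_j^2$ from squares with endpoint $\bv$ must be offset by a nontrivial cross term from some square of midpoint $\bv$---otherwise minimality of the decomposition is violated---placing $\bv \in \overline{A}(M)$. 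An analogous bookkeeping handles $\b$ when $\b$ is itself even. Hence $M \subseteq \overline{A}(M) \cup \TT$ is a $\TT$-mediated set containing $\b$.

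For the reverse direction together with the explicit form, I would argue by induction on $|M \setminus \TT|$. The base case $|M \setminus \TT| = 1$ forces $\TT = \{\bv, \bw\}$ and $\b = \tfrac{1}{2}(\bv+\bw)$, because $\b \in \Conv(\TT)^\circ$ cannot lie on a proper face of the simplex $\Conv(\TT)$, and the identity $c_\bv \x^\bv + c_\bw \x^\bw - d\x^\b = (\sqrt{c_\bv}\x^{\bv/2} - \sqrt{c_\bw}\x^{\bw/2})^2$ (plus a residual monomial square absorbing the AM-GM slack when $d < \Theta_f$) provides the required binomial square. In the inductive step, I would pick an extremal midpoint $\bu_0 \in M \setminus \TT$ that is not an endpoint $\bv_\bu$ or $\bw_\bu$ of any other $\bu \in M \setminus \TT$---such a ``leaf'' of the mediation DAG on $M$ always exists, for instance any odd element of $M \setminus \TT$ is automatically extremal---then select scalars $a_{\bu_0}, b_{\bu_0}$ so that the single square $(a_{\bu_0}\x^{\bv_{\bu_0}/2} - b_{\bu_0}\x^{\bw_{\bu_0}/2})^2$ supplies the correct coefficient at $\x^{\bu_0}$ in $f$, and verify that the residual $f - (a_{\bu_0}\x^{\bv_{\bu_0}/2} - b_{\bu_0}\x^{\bw_{\bu_0}/2})^2$ is a SONC polynomial whose mediated support is contained in $M \setminus \{\bu_0\}$, so that the inductive hypothesis applies.

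The main obstacle will be the inductive step of the reverse direction: one must guarantee the existence of an extremal midpoint in every nontrivial mediated set and then verify that peeling off the chosen binomial square leaves a residual that is still a SONC with the required circuit structure on $M \setminus \{\bu_0\}$. The decisive arithmetic ingredient is the AM-GM inequality applied along the mediations, combined with the identity $\Theta_f = \prod_{\a \in \TT}(c_\a/\lambda_\a)^{\lambda_\a}$, which controls how circuit numbers update after each reduction and certifies that the intermediate polynomials remain nonnegative throughout the induction.
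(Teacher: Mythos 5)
The paper does not prove this statement at all: it is quoted verbatim from Theorem~5.2 of Iliman--De~Wolff \citep{iw} (which in turn rests on Reznick's agiform machinery), so you are attempting a from-scratch argument that the authors deliberately outsource. Your reduction to agiforms and your forward direction are essentially sound: in any SOBS decomposition into genuine binomials, an even exponent outside $\TT$ (or an even $\b$ with $d>0$, or an odd midpoint) carries a nonpositive coefficient in $f$ and therefore must be cancelled by a cross term, which exhibits it as an average of two distinct even endpoints; this is the standard argument and needs no minimality beyond discarding unused squares.

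The reverse direction, however, has a genuine gap, and it is exactly at the point you flag as ``the main obstacle.'' First, the extremal midpoint you need does not always exist: take $n=1$, $\TT=\{0,6\}$, $M=\{0,2,4,6\}$ with the forced mediations $2=\tfrac{1}{2}(0+4)$ and $4=\tfrac{1}{2}(2+6)$. Here $M\setminus\TT=\{2,4\}$ and each element is an endpoint of the other's mediation, so the mediation relation is cyclic and your ``leaf of the DAG'' does not exist (and there is no odd element to fall back on when $\b$ is even). Second, even when a leaf exists --- say $\b$ is odd and you peel off $(a_{\b}\x^{\bv_\b/2}-b_{\b}\x^{\bw_\b/2})^2$ with $2a_\b b_\b=d$ --- the residual is $\sum_{\a}c_\a\x^\a-a_\b^2\x^{\bv_\b}-b_\b^2\x^{\bw_\b}$, a polynomial with \emph{two} negative terms at interior even points, so the inductive hypothesis for a single circuit does not apply; one must split the residual into two circuits and choose $(a_\b,b_\b)$ on the hyperbola $2a_\b b_\b=d$ so that \emph{both} pieces remain nonnegative with mediated sets inside $M\setminus\{\b\}$, and this choice is globally coupled across all squares. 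In the example above the coefficients solve a quartic whose solvability is equivalent to $|d|\le\Theta_f$ only after an AM--GM computation over the whole chain $0,2,4,6$; no local peeling produces it. Reznick's actual proof resolves this coupling by a global construction (a weighted linear system over the mediated set), not by induction on leaves, and your proposal would need to be rebuilt along those lines --- or, as the paper does, simply cite \citep{iw}.
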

\begin{proof}
It follows from Theorem 5.2 in \cite{iw}.
\end{proof}

By Theorem \ref{sec3-thm1}, if we want to represent a nonnegative circuit polynomial as a sum of binomial squares, we need to first decide if there exists a $\TT$-mediated set containing a given lattice point and then to compute one if there exists. However, there are obstacles for each of these two steps: (1) there may not exist such a $\TT$-mediated set containing a given lattice point; (2) even if such a set exists, there is no efficient algorithm to compute it. In order to overcome these two difficulties, we introduce the concept of $\TT$-rational mediated sets as a replacement of $\TT$-mediated sets by admitting rational numbers in coordinates. 

Concretely, for a subset $M\subseteq\Q^n$, let us define
$\widetilde{A}(M):=\{\frac{1}{2}(\bv+\bw)\mid\bv\ne\bw,\bv,\bw\in M\}$
as the set of averages of distinct rational points in $M$. Let us assume that $\TT\subseteq\Q^n$ comprises the vertices of a simplex. We say that $M$ is a {\em $\TT$-rational mediated set} if $\TT\subseteq M\subseteq\widetilde{A}(M)\cup\TT$. We shall see that for a trellis $\TT$ and a lattice point $\b\in\Conv(\TT)^{\circ}$, a $\TT$-rational mediated set containing $\b$ always exists and moreover, there is an effective algorithm to compute it.

First, let us consider the one dimensional case. For a sequence of integer numbers $A=\{s,q_1,\ldots,q_m,p\}$ (arranged from small to large), if every $q_i$ is an average of two distinct numbers in $A$, then we say $A$ is an {\em $(s,p)$-mediated sequence}. Note that the property of $(s,p)$-mediated sequences is preserved under translations, that is, there is a one-to-one correspondence between $(s,p)$-mediated sequences and $(s+r,p+r)$-mediated sequences for any integer number $r$. So it suffices to consider the case of $s=0$.

For a fixed $p$ and an integer $q$ with $0<q<p$, a {\em minimal $(0,p)$-mediated sequence} containing $q$ is a $(0,p)$-mediated sequence containing $q$ with the least number of elements. 
\begin{example}
Consider the set $A=\{0,2,4,5,8,11\}$. One can easily check by hand that $A$ is a minimal $(0,11)$-mediated sequence containing $2,4,5,8$.
\end{example}
Denote the number of elements in a minimal $(0,p)$-mediated sequence containing $q$ by $N(\frac{q}{p})$. One can then easily show that $N(\frac{1}{p})=\left\lceil\log_2(p)\right\rceil+2$ by induction on $p$. We conjecture that this formula holds for general $q$.
\begin{conjecture}\label{con}
If gcd$(p,q) = 1$, then $N(\frac{q}{p})=\left\lceil\log_2(p)\right\rceil+2$.
\end{conjecture}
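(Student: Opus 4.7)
The plan is to establish $N(q/p) = \lceil \log_2 p \rceil + 2$ by proving matching lower and upper bounds. For the lower bound $N(q/p) \geq \lceil \log_2 p \rceil + 2$, one approach is to bootstrap from the already proven case $q = 1$ via a symmetry or reduction argument: given any $(0,p)$-mediated sequence $S$ containing $q$, transform it into one of the same size containing $1$. A more direct approach is to view the internal elements of $S$ as unknowns in a linear system of midpoint equations over the $\mathbb{Z}$-span of the endpoints $0$ and $p$, and argue that the denominators of solutions to such systems are bounded by $2^{|S|-2}$; the coprimality of $q$ with $p$ then forces $p \leq 2^{|S|-2}$, hence $|S| \geq \lceil \log_2 p \rceil + 2$. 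The delicate point is that cyclic dependencies among midpoint relations prevent a naive dyadic-depth argument (for instance, the sequence $\{0, 1, 2, 3, 5\}$ realises the non-dyadic fraction $1/5$), so a careful determinant or Cramer-style estimate on the associated linear system is needed.

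For the upper bound I would proceed by strong induction on $p$, splitting on parity. The even case is straightforward: if $p = 2p'$, then $\gcd(p, q) = 1$ forces $q$ to be odd and in particular $q \neq p'$; up to the reflection $q \mapsto p - q$ (which preserves coprimality with $p$) we may assume $q < p'$. The inductive hypothesis yields a $(0, p')$-mediated sequence $S'$ of size $\lceil \log_2 p' \rceil + 2$ containing $q$; then $S' \cup \{p\}$ is a $(0, p)$-mediated sequence of size $\lceil \log_2 p \rceil + 2$ containing $q$, with $p'$ justified as $(0 + p)/2$ and all other elements as in $S'$.

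The real obstacle is the odd case $p \equiv 1 \pmod 2$. Here neither endpoint can be split directly (since $p/2$ is not an integer), so internal elements must depend on siblings added later via cyclic mediated relations, ruling out any top-down recursion. A natural template is a Euclidean-style reduction, relating mediated sequences for $(p, q)$ to those for smaller pairs obtained via the Euclidean algorithm on numerator and denominator; but this can require up to $\log_\varphi p \approx 1.44 \log_2 p$ recursive steps on inputs $q/p$ close to the golden ratio, overshooting the budget of $\lceil \log_2 p \rceil + 2$ elements. Overcoming this obstacle likely requires a sharper construction, for instance one guided by the non-adjacent (signed binary) expansion of $q/p$, or by a carefully chosen sequence of mediants in the Stern--Brocot tree that halves $p$ on average every $\bigo(1)$ steps, thereby reconciling the Euclidean nature of the parity obstruction with the binary nature of the target bound. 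The need for such a reconciliation is precisely why the statement is left as a conjecture in the paper rather than a theorem.
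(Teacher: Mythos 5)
The statement you are addressing is left as an \emph{open conjecture} in the paper: the authors claim only the special case $q=1$ (``one can easily show $N(\frac{1}{p})=\lceil\log_2(p)\rceil+2$ by induction on $p$'') and, for general $q$, prove in Lemma \ref{sec4-lm3} only an upper bound of $\frac{1}{2}(\log_2(p)+\frac{3}{2})^2$, which is quadratic rather than linear in $\log_2(p)$. So there is no proof in the paper to compare yours against, and your proposal --- as you yourself acknowledge --- is not a proof either, but a plan with two open gaps. On the lower bound, the Cramer-style estimate is not carried out, and the naive version fails: the midpoint relations $2q_i=a_i+b_i$ give an $m\times m$ system whose rows have $\ell_2$-norm up to $\sqrt{6}$, so Hadamard's inequality only bounds the determinant (hence the achievable denominators) by $6^{m/2}\approx 2.45^m$, not by the $2^m$ your argument needs. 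Closing this would require exploiting the combinatorial structure of the relation graph, and your own example $\{0,1,2,3,5\}$ shows that the cyclic dependencies which defeat a dyadic-depth argument really do occur; to my knowledge even the lower bound $N(\frac{q}{p})\ge\lceil\log_2(p)\rceil+2$ is not established for general $q$.

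On the upper bound, your even case is correct (and is essentially Case 1 of Lemma \ref{sec4-lm3}), but the induction cannot close because the odd case is precisely where the paper's own construction loses: there one peels off the dyadic part $q=2^kr$ and recurses on a subproblem of size $\frac{p+r-q}{2}$, paying $k+1$ extra elements per round, which is exactly what produces the quadratic bound. Your proposed remedies (signed binary expansions, Stern--Brocot mediants) are plausible directions but are not developed, and the paper's empirical data --- an average size growing like $1.29\log_2(p)$ for its construction, versus the conjectured $\lceil\log_2(p)\rceil+2$ --- indicates that any construction attaining the conjectured value must differ genuinely from the greedy/Euclidean ones. In short, you have correctly diagnosed why Conjecture \ref{con} is stated as a conjecture, but neither you nor the paper proves it; a complete argument would have to supply both the determinant-type lower bound and a new odd-case construction.
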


Generally we do not know how to compute a minimal $(0,p)$-mediated sequence containing a given $q$.
However, we can design a procedure to compute an approximately minimal $(0,p)$-mediated sequence containing a given $q$, based on the following lemma.
\begin{lemma}\label{sec4-lm3}
For integers $p,q$ such that $0<q<p$, there exists a $(0,p)$-mediated sequence containing $q$ with the cardinality less than $\frac{1}{2}(\log_2(p)+\frac{3}{2})^2$.
\end{lemma}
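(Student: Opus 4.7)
My plan is to proceed by strong induction on $p$, combining a simple halving reduction when $p$ is even with a ``backbone-plus-chain'' construction when $p$ is odd.

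\emph{Even $p=2m$.} The inductive hypothesis would yield a $(0,m)$-mediated sequence $S'$ of size at most $\tfrac12(\log_2 m+\tfrac32)^2$ containing $q$ (if $q\le m$) or $q-m$ (if $q>m$, in which case I would translate $S'$ by $m$). Then $S'\cup\{p\}$, respectively $\{0\}\cup(S'+m)$, is a $(0,p)$-mediated sequence of size $|S'|+1$ containing $q$: the former endpoint $m$ of $S'$ becomes the average of $0$ and $p$, and all previously justified averages persist. The target bound is preserved since $\tfrac12(\log_2 p+\tfrac32)^2-\tfrac12(\log_2 m+\tfrac32)^2=\log_2 m+2\ge 1$.

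\emph{Odd $p$.} I would set $k=\lceil\log_2 p\rceil$ (so $2^{k-1}<p<2^k$) and take as backbone $B=\{0,1,2,4,\ldots,2^{k-1},p\}$. Each $2^i$ with $i\le k-2$ is automatically the average of $0$ and $2^{i+1}\in B$. For $2^{k-1}$ itself I would use $2^{k-1}=(r+p)/2$ with $r=2^k-p\in(0,2^{k-1})$, add $r$ to the sequence, and express it by peeling off bits of $2r$ one at a time, each peel adding at most one new element. I would then add $q$ and express it through a doubling chain: if $q<2^{k-1}$, strip the top bit via $q=(2(q-2^h)+2^{h+1})/2$ with $h=\lfloor\log_2 q\rfloor$, so that $2^{h+1}\in B$ and the new element $2(q-2^h)$ has strictly fewer bits and again lies in $(0,2^{k-1})$; if $q\ge 2^{k-1}$, reflect via $q=((2q-p)+p)/2$ and iterate on the strictly smaller value $2q-p$.

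\emph{Size analysis and main obstacle.} The backbone contributes $k+2$ elements, the expression of $r$ at most $k$ more, and the $q$-chain at most $O(\log p)$ reflections (each strictly decreasing $q$) followed by at most $k$ bit-strips (each strictly decreasing bit count), each step adding one element, for a total of $O(\log p)$, comfortably below $\tfrac12(\log_2 p+\tfrac32)^2$. The main obstacle will be showing that the bit-strip chain, once it descends into the ``lower half'' $(0,2^{k-1})$, stays there and can always be realised using powers of $2$ from $B$; this follows from the estimate $2(q-2^h)<2^{h+1}\le 2^{k-1}$ whenever $q<2^{k-1}$ with $h=\lfloor\log_2 q\rfloor$, which guarantees the reflection branch is never triggered again once we have descended. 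A loose bookkeeping via the triangular sum $\sum_{i=1}^{k}i=\tfrac{k(k+1)}{2}$ absorbs any duplication between the $r$-chain and the $q$-chain and fits the claimed bound with slack.
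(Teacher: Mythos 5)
Your construction is genuinely different from the paper's. The paper proceeds purely by induction: for even $p$ it halves, and for odd $p$ it first makes $q$ even (replacing $q$ by $p-q$ if necessary), writes $q=2^kr$ with $r$ odd, builds the doubling chain $0,\tfrac12 q,\tfrac34 q,\dots,q$, and recurses on the shorter interval between $q-r$ and $\tfrac{q-r+p}{2}$ (or between $\tfrac{q-r+p}{2}$ and $p$); the quadratic bound falls out as ``at most $\log_2 p$ new points per level over at most $\log_2 p$ levels.'' You instead install a non-recursive power-of-two backbone $\{0,1,2,\dots,2^{k-1},p\}$, justify $2^{k-1}$ via $r=2^k-p$, and reach $q$ by reflections $q\mapsto 2q-p$ followed by bit-peeling. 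Your construction is valid — I checked the invariants you flag ($2(q-2^h)<2^{h+1}\le 2^{k-1}$, distinctness of the averaged pairs, termination of the peeling by decreasing popcount) — and it actually yields an $O(\log_2 p)$ sequence, asymptotically stronger than the lemma's quadratic bound and consistent with Conjecture \ref{con}.

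However, the size analysis as written does not establish the stated bound. First, ``each reflection strictly decreases $q$'' only gives $O(p)$ reflections; the correct reason the reflection phase has length $O(\log_2 p)$ is that $p-q_{i+1}=2(p-q_i)$ doubles, so after $j$ steps $p-q_j\ge 2^j$, forcing $j<k$. That is a one-line fix. Second, and more seriously, the final comparison with $\tfrac12(\log_2 p+\tfrac32)^2$ is asserted (``comfortably below,'' ``fits with slack'') rather than proved. The crude worst-case count of your construction is roughly $4k-2$ with $k=\lceil\log_2 p\rceil$ (backbone $k+2$, at most $k-2$ points for the $r$-chain, at most $2k-2$ for the $q$-chain), and $4k-2$ exceeds $\tfrac12(\log_2 p+\tfrac32)^2>\tfrac12(k+\tfrac12)^2$ for all $k\le 6$, i.e.\ for all odd $p\le 63$ (e.g.\ $p=9$: crude count $14$ versus bound $\approx 10.9$). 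The invocation of the triangular sum $\tfrac{k(k+1)}{2}$ does not help: it is never derived as an upper bound on your construction, and in any case $4k-2>\tfrac{k(k+1)}{2}$ for $k\le 6$. So you must either sharpen the count (exploiting that the reflection phase and the two peeling chains cannot all be maximal simultaneously) or verify the finitely many small $p$ directly. A smaller loose end: in the even case with $q=p/2$ the inductive hypothesis (which requires $q'<p'$ strictly) does not apply; you need the explicit sequence $\{0,p/2,p\}$ there, as the paper does via its reduction to $\gcd(p,q)=1$.
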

\begin{proof}
We can assume gcd$(p,q) = 1$ (otherwise one can consider $p/$\textrm{gcd}$(p,q),q/$\textrm{gcd}$(p,q)$ instead). Let us do induction on $p$. Assume that for any $p',q'\in\N,0<q'<p'<p$, there exists a $(0,p')$-mediated sequence containing $q'$ with the number of elements less than $\frac{1}{2}(\log_2(p')+\frac{3}{2})^2$.

{\bf Case 1:} Suppose that $p$ is an even number. If $q=\frac{p}{2}$, then by gcd$(p,q) = 1$, we have $q=1$ and $A=\{0,1,2\}$ is a $(0,p)$-mediated sequence containing $q$. Otherwise, we have either $0<q<\frac{p}{2}$ or $\frac{p}{2}<q<p$. For $0<q<\frac{p}{2}$, by the induction hypothesis, there exists a $(0,\frac{p}{2})$-mediated sequence $A'$ containing $q$. For $\frac{p}{2}<q<p$, since the property of mediated sequences is preserved under translations, one can first subtract $\frac{p}{2}$ and obtain a $(0,\frac{p}{2})$-mediated sequence containing $q-\frac{p}{2}$ by the induction hypothesis. Then by adding $\frac{p}{2}$, one obtains a $(\frac{p}{2},p)$-mediated sequence $A'$ containing $q$. It follows that $A=A'\cup\{p\}$ or $A=\{0\}\cup A'$ is a $(0,p)$-mediated sequence containing $q$.
\begin{center}
\begin{tikzpicture}
\draw (0,0)--(10,0);
\fill (0,0) circle (1pt);
\node[above] (1) at (0,0) {$0$};
\fill (3,0) circle (1pt);
\node[above] (2) at (3,0) {$q$};
\fill (5,0) circle (1pt);
\node[above] (6) at (5,0) {$\frac{p}{2}$};
\fill (7,0) circle (1pt);
\node[above] (3) at (7,0) {$(q)$};
\fill (10,0) circle (1pt);
\node[above] (7) at (10,0) {$p$};
\end{tikzpicture}
\end{center}
Moreover, we have
$$\#A=1+\#A'<1+\frac{1}{2}(\log_2(\frac{p}{2})+\frac{3}{2})^2<\frac{1}{2}(\log_2(p)+\frac{3}{2})^2.$$

{\bf Case 2:} Suppose that $p$ is an odd number. Without loss of generality, assume that $q$ is an even number (otherwise one can consider $p-q$ instead and then obtain a $(0,p)$-mediated sequence containing $q$ through the map $x\mapsto p-x$ which clearly preserves the property of mediated sequences).

Let $q=2^kr$ for some $k,r\in\N\backslash\{0\}$ and $2\nmid r$. If $q=p-r$, then $q=\frac{q-r+p}{2}$. Since gcd$(p,q) = 1$, we have $r=1$. Let $A=\{0,\frac{1}{2}q,\frac{3}{4}q,\ldots,(1-\frac{1}{2^{k}})q,q,p\}$. For $i=1,\ldots,k$, we have $(1-\frac{1}{2^{i}})q=\frac{1}{2}(1-\frac{1}{2^{i-1}})q+\frac{1}{2}q$. Therefore, $A$ is a $(0,p)$-mediated sequence containing $q$.
\begin{center}
{\small
\begin{tikzpicture}
\draw (0,0)--(10,0);
\fill (0,0) circle (1pt);
\node[above] (1) at (0,0) {$0$};
\fill (4,0) circle (1pt);
\node[above] (2) at (4,0) {$\frac{1}{2}q$};
\node[above] (3) at (7.2,0) {$\cdots$};
\fill (6.7,0) circle (1pt);
\node[above] (4) at (6.7,0) {$\frac{3}{4}q$};
\fill (8,0) circle (1pt);
\node[above] (5) at (8,0) {$(1-\frac{1}{2^{k}})q$};
\node[below] (8) at (8,0) {$q-r$};
\fill (8.8,0) circle (1pt);
\node[above] (6) at (8.8,0) {$q$};
\fill (10,0) circle (1pt);
\node[above] (7) at (10,0) {$p$};
\end{tikzpicture}}
\end{center}
Moreover, we have
$$\#A=k+3<\frac{1}{2}(\log_2(2^k+1)+\frac{3}{2})^2=\frac{1}{2}(\log_2(p)+\frac{3}{2})^2.$$

If $q<p-r$, then $q$ lies on the line segment between $q-r$ and $\frac{q-r+p}{2}$. Since $\frac{q-r+p}{2}-(q-r)=\frac{p+r-q}{2}<p$, then by the induction hypothesis, there exists a $(q-r,\frac{q-r+p}{2})$-mediated sequence $A'$ containing $q$ (using translations). It follows that $A=\{0,\frac{1}{2}q,\frac{3}{4}q,\ldots,(1-\frac{1}{2^{k-1}})q,p\}\cup A'$ is a $(0,p)$-mediated sequence containing $q$.
\begin{center}
{\small
\begin{tikzpicture}
\draw (0,0)--(10,0);
\fill (0,0) circle (1pt);
\node[above] (1) at (0,0) {$0$};
\fill (3.7,0) circle (1pt);
\node[above] (2) at (3.7,0) {$\frac{1}{2}q$};
\node[above] (3) at (6.2,0) {$\cdots$};
\fill (5.7,0) circle (1pt);
\node[above] (4) at (5.7,0) {$\frac{3}{4}q$};
\fill (7,0) circle (1pt);
\node[above] (5) at (7,0) {$(1-\frac{1}{2^{k}})q$};
\node[below] (8) at (7,0) {$q-r$};
\fill (7.7,0) circle (1pt);
\node[above] (6) at (7.7,0) {$q$};
\fill (8.5,0) circle (1pt);
\node[above] (8) at (8.5,0) {$\frac{q-r+p}{2}$};
\fill (10,0) circle (1pt);
\node[above] (7) at (10,0) {$p$};
\end{tikzpicture}}
\end{center}
Moreover, we have
\begin{align*}
\#A=k+1+\#A'&<\log_2(\frac{q}{r})+1+\frac{1}{2}(\log_2(\frac{p+r-q}{2})+\frac{3}{2})^2\\
&<\log_2(p)+1+\frac{1}{2}(\log_2(\frac{p}{2})+\frac{3}{2})^2\\
&=\frac{1}{2}(\log_2(p)+\frac{3}{2})^2.
\end{align*}

If $q>p-r$, then $q$ lies on the line segment between $\frac{q-r+p}{2}$ and $p$. Since $p-\frac{q-r+p}{2}=\frac{p+r-q}{2}<p$, then by the induction hypothesis, there exists a $(\frac{q-r+p}{2},p)$-mediated sequence $A'$ containing $q$ (using translations). It follows that the set $A=\{0,\frac{1}{2}q,\frac{3}{4}q,\ldots,(1-\frac{1}{2^{k}})q\}\cup A'$ is a $(0,p)$-mediated sequence containing $q$.
\begin{center}
{\small
\begin{tikzpicture}
\draw (0,0)--(10,0);
\fill (0,0) circle (1pt);
\node[above] (1) at (0,0) {$0$};
\fill (3.7,0) circle (1pt);
\node[above] (2) at (3.7,0) {$\frac{1}{2}q$};
\node[above] (3) at (6.1,0) {$\cdots$};
\fill (5.6,0) circle (1pt);
\node[above] (4) at (5.6,0) {$\frac{3}{4}q$};
\fill (6.9,0) circle (1pt);
\node[above] (5) at (6.9,0) {$(1-\frac{1}{2^{k}})q$};
\node[below] (8) at (6.9,0) {$q-r$};
\fill (7.9,0) circle (1pt);
\node[above] (6) at (7.9,0) {$\frac{q-r+p}{2}$};
\fill (8.4,0) circle (1pt);
\node[above] (8) at (8.4,0) {$q$};
\fill (10,0) circle (1pt);
\node[above] (7) at (10,0) {$p$};
\end{tikzpicture}}
\end{center}
As previously, we have $\#A =k+1+\#A' < \frac{1}{2}(\log_2(p)+\frac{3}{2})^2 $.
\end{proof}


According to the proof of Lemma \ref{sec4-lm3}, the procedure to compute an approximately minimal $(0,p)$-mediated sequence containing a given $q$ is stated as Algorithm \ref{alg:medseq}.

\begin{algorithm}
\renewcommand{\algorithmicrequire}{\textbf{Input:}}
\renewcommand{\algorithmicensure}{\textbf{Output:}}
\caption{${\tt MedSeq}(p,q)$}\label{alg:medseq}
\begin{algorithmic}[1]
\REQUIRE
$p,q\in\N,0<q<p$
\ENSURE
A set of triples $\{(u_i,v_i,w_i)\}_i$ with $u_i=\frac{1}{2}(v_i+w_i)$ such that $\{0,p\}\cup\{u_i\}_i$ is a $(0,p)$-mediated sequence containing $q$
\STATE $u:=p$, $v:=q$, $w:=$gcd$(p,q)$;
\STATE $u:=\frac{u}{w}, v:=\frac{v}{w}$;
\IF{$2|u$}
  \IF{$v=\frac{u}{2}$}
  \STATE $A:=\{(1,0,2)\}$;
  \ELSE
  \IF{$v<u/2$}
    \STATE $A:={\tt MedSeq}(\frac{u}{2},v)\cup\{(\frac{u}{2},0,u)\}$;
    \ELSE
    \STATE $A:=\{(\frac{u}{2},0,u)\}\cup({\tt MedSeq}(\frac{u}{2},v-\frac{u}{2})+\frac{u}{2})$;
  \ENDIF
  \ENDIF
\ELSE
  \IF{$2|v$}
  \STATE Let $k,r\in\N\backslash\{0\}$ such that $v=2^kr$ and $2\nmid r$;
    \IF{$v=u-r$}
    \STATE $A:=\{(\frac{1}{2}v,0,v),(\frac{3}{4}v,\frac{1}{2}v,v),\ldots,(v,v-r,u)\}$;
    \ELSE
      \IF{$v<u-r$}
      \STATE $A:=\{(\frac{1}{2}v,0,v),\ldots,(\frac{v-r+u}{2},v-r,u)\}\cup({\tt MedSeq}(\frac{u+r-v}{2},r)+v-r)$;
      \ELSE
      \STATE $A:=\{(\frac{1}{2}v,0,v),\ldots,(\frac{v-r+u}{2},v-r,u)\}\cup({\tt MedSeq}(\frac{u+r-v}{2},\frac{v+r-u}{2})+\frac{v+u-r}{2})$;
      \ENDIF
    \ENDIF
  \ELSE
  \STATE $A:=u-{\tt MedSeq}(u,u-v)$;
  \ENDIF
\ENDIF
\STATE \textbf{return} $wA$;
\end{algorithmic}
\end{algorithm}

\begin{lemma}\label{sec4-lm2}
Suppose that $\a_1$ and $\a_2$ are two rational points, and $\b$ is any rational point on the line segment between $\a_1$ and $\a_2$. Then there exists an $\{\a_1,\a_{2}\}$-rational mediated set $M$ containing $\b$. Furthermore, if the denominators of coordinates of $\a_1,\a_2,\b$ are odd numbers, and the numerators of coordinates of $\a_1,\a_2$ are even numbers, then we can ensure that the denominators of coordinates of points in $M$ are odd numbers and the numerators of coordinates of points in $M\backslash\{\b\}$ are even numbers.
\end{lemma}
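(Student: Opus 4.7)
The plan is to reduce the statement to a one-dimensional combinatorial problem by parametrizing the segment, invoke Algorithm \ref{alg:medseq} for bare existence, and supply an explicit mediated sequence to obtain the parity refinement. Parametrize $[\a_1,\a_2]$ by $\phi(t) := (1-t)\a_1 + t\a_2$ and write $\b = \phi(q/p)$ with $\gcd(p,q)=1$. The lifting $s \mapsto \phi(s/p)$ converts each $(0,p)$-mediated sequence containing $q$ into an $\{\a_1,\a_2\}$-rational mediated set containing $\b$, because every identity $u = \tfrac{1}{2}(v+w)$ in $\Z$ gives $\phi(u/p) = \tfrac{1}{2}(\phi(v/p)+\phi(w/p))$ in $\Q^n$. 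Taking $S$ to be the output of ${\tt MedSeq}(p,q)$ (which exists by Lemma \ref{sec4-lm3}) therefore settles the unconditional existence claim.

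For the parity refinement, fix a common odd denominator $D$ and write $\a_j = 2\bu_j/D$ with $\bu_j \in \Z^n$. Each lifted point takes the form
\begin{equation*}
\phi(s/p) \;=\; \frac{2\bigl[\,p\,\bu_1 + s(\bu_2-\bu_1)\,\bigr]}{pD}.
\end{equation*}
When $p$ is odd, the numerator already carries the factor $2$ and the denominator $pD$ is odd, so any $(0,p)$-mediated sequence containing $q$ yields lifted points with odd denominators and even numerators. When $p$ is even, write $p = 2^k m$ with $m$ odd; since $\gcd(p,q)=1$, $q$ is odd. A 2-adic valuation count applied to $\b = \phi(q/p)$ shows that the hypothesis that $\b$ has odd denominators forces $2^{k-1}\mid(\bu_2-\bu_1)$ componentwise. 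Setting $\tilde{\bu} := (\bu_2-\bu_1)/2^{k-1}\in\Z^n$, every lifted point rewrites as $(2m\bu_1 + s\tilde{\bu})/(mD)$, with $mD$ odd; the numerator is even exactly when $s$ is, because $2m\bu_1$ has even components.

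It remains to exhibit, in the case $p$ even, a $(0,p)$-mediated sequence containing $q$ whose unique odd element is $q$. Algorithm \ref{alg:medseq} does not always achieve this---for instance ${\tt MedSeq}(6,1)$ returns $\{0,1,2,3,6\}$, whose element $3$ is odd but distinct from $q$. We therefore use $S := \{0,2,4,\ldots,p\}\cup\{q\}$: every even $2i$ in $(0,p)$ equals $\tfrac{1}{2}(0+4i)$ when $i\le p/4$ or $\tfrac{1}{2}((4i-p)+p)$ otherwise, and $q$ itself equals $\tfrac{1}{2}(0+2q)$ when $q\le p/2$ or $\tfrac{1}{2}((2q-p)+p)$ otherwise, so $S$ is a valid $(0,p)$-mediated sequence in which $q$ is the only odd element. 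Lifting this $S$ through $\phi$ yields the mediated set $M$ required by the lemma. The main obstacle I expect is recognizing that Algorithm \ref{alg:medseq} does not automatically satisfy the parity requirement and supplying the explicit even-plus-$q$ sequence in its place; the 2-adic bookkeeping above is the technical heart of the argument.
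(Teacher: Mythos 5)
Your proof is correct, and the first half (reduction to a one-dimensional $(0,p)$-mediated sequence via the affine parametrization $s\mapsto(1-\frac{s}{p})\a_1+\frac{s}{p}\a_2$) is exactly the paper's argument. Where you genuinely diverge is the parity refinement. The paper clears denominators by the odd lcm $r$, and splits on whether $r\b$ is even: if so it halves everything and rescales by $\frac{2}{r}$; if not it replaces $\b$ by $\b'=2\b-\a_1$ (for which $r\b'$ is even), builds the mediated set for $\b'$, and appends $\b=\frac{1}{2}(\a_1+\b')$ as a single extra point. You instead do a direct $2$-adic analysis of the lifted points and, in the problematic case $p$ even, discard ${\tt MedSeq}$ entirely in favour of the explicit sequence $\{0,2,4,\ldots,p\}\cup\{q\}$. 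Your $2$-adic bookkeeping checks out (the hypothesis on $\b$ does force $2^{k-1}\mid(\bu_2-\bu_1)$, and the reduced fractions keep odd denominators and even numerators since any cancelled gcd is odd), your counterexample ${\tt MedSeq}(6,1)=\{0,1,2,3,6\}$ is accurate, and the all-even-plus-$q$ sequence is a valid mediated sequence; so the lemma as stated is proved. The trade-off is cardinality: your sequence in the even-$p$ case has $\Theta(p)$ elements, whereas the paper's detour through $\b'$ preserves the $\frac{1}{2}(\log_2(p)+\frac{3}{2})^2$ bound of Lemma \ref{sec4-lm3} up to one extra point. That bound is not part of the statement of Lemma \ref{sec4-lm2}, but the paper's proofs of Lemma \ref{bound-triple} and Theorem \ref{th:arithcost} cite ``the proof of Lemma \ref{sec4-lm2}'' precisely for it, so substituting your construction would break the downstream complexity analysis (and the practicality of the SOCP formulation) even though it suffices for the existence and parity claims here.
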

\begin{proof}
Suppose $\b=(1-\frac{q}{p})\a_1+\frac{q}{p}\a_2$, $p,q\in\N,0<q<p$,gcd$(p,q) = 1$. We then construct a one-to-one correspondence between the points on the one-dimensional number axis and the points on the line across $\a_1$ and $\a_2$ via the map:
$s\mapsto(1-\frac{s}{p})\a_1+\frac{s}{p}\a_2,$
such that $\a_1$ corresponds to the origin, $\a_2$ corresponds to $p$ and $\b$ corresponds to $q$. 
Then it is clear that a $(0,p)$-mediated sequence containing $q$ corresponds to a $\{\a_1,\a_{2}\}$-rational mediated set containing $\b$.
Hence by Lemma \ref{sec4-lm3}, there exists a $\{\a_1,\a_{2}\}$-rational mediated set $M$ containing $\b$ with the number of elements less than $\frac{1}{2}(\log_2(p)+\frac{3}{2})^2$. Moreover, we can see that if $\a_1,\a_2,\b$ are lattice points, then the elements in $M$ are also lattice points.

If the denominators of coordinates of $\a_1,\a_2,\b$ are odd numbers, and the numerators of coordinates of $\a_1,\a_2$ are even numbers, assume that the least common multiple of denominators appearing in the coordinates of $\a_1,\a_2,\b$ is $r$ and then remove the denominators by multiplying the coordinates of $\a_1,\a_2,\b$ by $r$ such that $r\a_1,r\a_2$ are even lattice points. If $r\b$ is even, let $M'$ be the $\{\frac{r}{2}\a_1,\frac{r}{2}\a_{2}\}$-rational mediated set containing $\frac{r}{2}\b$ obtained as above (the elements in $M'$ are lattice points). Then $M=\frac{2}{r}M':=\{\frac{2}{r}\bu\mid\bu\in M'\}$ is an $\{\a_1,\a_{2}\}$-rational mediated set containing $\b$ such that the denominators of coordinates of points in $M$ are odd numbers and the numerators of coordinates of points in $M\backslash\{\b\}$ are even numbers.

If $r\b$ is not even, assume without loss of generality that $\b$ lies on the line segment between $\a_1$ and $\frac{\a_1+\a_2}{2}$. Let $\b'=2\b-\a_1$ with $r\b'$ an even lattice point. Let $M'$ be the $\{\frac{r}{2}\a_1,\frac{r}{2}\a_{2}\}$-rational mediated set containing $\frac{r}{2}\b'$ obtained as above (note that the elements in $M'$ are lattice points). Then $M=\frac{2}{r}M'\cup\{\b\}$ is an $\{\a_1,\a_{2}\}$-rational mediated set containing $\b$ such that the denominators of coordinates of points in $M$ are odd numbers and the numerators of coordinates of points in $M\backslash\{\b\}$ are even numbers as desired.
\end{proof}

The procedure based on Lemma \ref{sec4-lm2} to obtain an $\{\a_1,\a_{2}\}$-rational mediated set containing $\b$ is stated in Algorithm \ref{alg:lmedset}.

\begin{algorithm}
\renewcommand{\algorithmicrequire}{\textbf{Input:}}
\renewcommand{\algorithmicensure}{\textbf{Output:}}
\caption{${\tt LMedSet}(\a_1,\a_2,\b)$}\label{alg:lmedset}
\begin{algorithmic}[1]
\REQUIRE
$\a_1,\a_2,\b\in\Q^n$ such that $\b$ lies on the line segment between $\a_1$ and $\a_2$
\ENSURE
A sequence of triples $\{(\bu_i,\bv_i,\bw_i)\}_i$ with $\bu_i=\frac{1}{2}(\bv_i+\bw_i)$ such that $\{\a_1,\a_2\}\cup\{\bu_i\}_i$ is a $\{\a_1,\a_2\}$-rational mediated set containing $\b$
\STATE Let $\b=(1-\frac{q}{p})\a_1+\frac{q}{p}\a_2$, $p,q\in\N,0<q<p$,gcd$(p,q) = 1$;
\STATE $A:={\tt MedSeq}(p,q)$;
\STATE $M:=\cup_{(u,v,w)\in A}\{((1-\frac{u}{p})\a_1+\frac{u}{p}\a_2,(1-\frac{v}{p})\a_1+\frac{v}{p}\a_2,(1-\frac{w}{p})\a_1+\frac{w}{p}\a_2)\}$;
\STATE \textbf{return} $M$;
\end{algorithmic}
\end{algorithm}

\begin{lemma}\label{sec4-lm5}
For a trellis $\TT=\{\a_1,\ldots,\a_m\}$ and a lattice point $\b\in\Conv(\TT)^{\circ}$, there exists a $\TT$-rational mediated set $M_{\TT\b}$ containing $\b$ such that the denominators of coordinates of points in $M_{\TT\b}$ are odd numbers and the numerators of coordinates of points in $M_{\TT\b}\backslash\{\b\}$ are even numbers.
\end{lemma}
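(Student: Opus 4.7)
My plan is to induct on the number of vertices $m$ of the trellis $\TT$, allowing $\b$ to be any rational point of $\Conv(\TT)^{\circ}$ whose coordinates have odd denominators; the lemma as stated is the special case where $\b$ is a lattice point and every denominator equals $1$. The base case $m=2$ follows immediately from Lemma \ref{sec4-lm2}: since $\a_1,\a_2\in(2\N)^n$, the parity hypotheses of that lemma (odd denominators and even numerators for the two endpoints) are automatically satisfied.

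For the inductive step $m\geq 3$, I would fix an apex $\a_m$ and intersect the ray from $\a_m$ through $\b$ with the opposite face $\Conv(\{\a_1,\ldots,\a_{m-1}\})$, obtaining a rational point $\b'$ lying strictly inside that face with $\b$ strictly between $\a_m$ and $\b'$ (the relative-interior location of $\b'$ follows from $\b\in\Conv(\TT)^{\circ}$ upon writing $\b$ in barycentric coordinates). Applying the inductive hypothesis to the smaller trellis $\{\a_1,\ldots,\a_{m-1}\}$ and the point $\b'$ yields a rational mediated set $M'\ni\b'$, and applying Lemma \ref{sec4-lm2} to the segment $[\a_m,\b']$ containing $\b$ yields a rational mediated set $M''\ni\b$. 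The candidate is $M_{\TT\b}:=M'\cup M''$. The mediated-set axiom is easy to check: every non-vertex point of $M'$ is a midpoint of two distinct points of $M'$; every non-endpoint point of $M''$ is a midpoint of two distinct points of $M''$; the endpoint $\a_m$ already lies in $\TT$; and the shared point $\b'\notin\TT$ is witnessed as a midpoint already inside $M'$, since $\b'$ is not a vertex of the smaller trellis.

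The main obstacle is propagating the parity conditions through this gluing. The inductive hypothesis gives $M'$ with odd denominators everywhere and with even numerators outside $\{\b'\}$, while the quantitative version of Lemma \ref{sec4-lm2} delivers the analogous guarantee for $M''$ only when both endpoints of $[\a_m,\b']$ have odd denominators and even numerators. The odd-denominator condition on $\b'$ is inherited from $M'$, but the even-numerator condition on $\b'$ may fail, for instance when the reduced fraction $t=q/p$ defining $\b=(1-t)\a_m+t\b'$ has both $p$ and $q$ odd together with $\b$ having an odd numerator. To handle this, I would adapt the reflection trick already used in the proof of Lemma \ref{sec4-lm2}: replace $\b$ by an auxiliary even-numerator point $\widetilde{\b}$ whose midpoint with a suitably chosen even-numerator anchor inside $\Conv(\TT)$ equals $\b$, run the construction above for $\widetilde{\b}$ in place of $\b$, and then adjoin $\b$ only as the midpoint of the anchor and $\widetilde{\b}$. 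A case analysis on the relative position of $\b$ within $\Conv(\TT)$, combined with a rescaling factor analogous to the $\tfrac{2}{r}$ appearing in Lemma \ref{sec4-lm2}, should ensure that $\b$ remains the unique element of $M_{\TT\b}$ permitted to carry odd-numerator coordinates; this bookkeeping is the true technical difficulty of the argument.
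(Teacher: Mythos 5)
Your recursion (peel off one vertex, project $\b$ onto the opposite face, glue the two mediated sets via Lemma \ref{sec4-lm2}) is essentially the paper's proof of the \emph{weaker} Lemma \ref{sec4-lm4}, where no parity conditions are required. The entire content of Lemma \ref{sec4-lm5} is the parity bookkeeping, and that is exactly the part your proposal leaves open; moreover, the specific recursion you chose cannot be repaired by a local reflection trick, because it already fails at the level of \emph{denominators}, not just numerators. Write $\b=\sum_{i=1}^m\frac{q_i}{p}\a_i$ with $p=\sum_i q_i$ and $\gcd(p,q_1,\dots,q_m)=1$. Projecting from the apex $\a_m$ gives $\b'=\sum_{i<m}\frac{q_i}{p-q_m}\a_i$. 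If $p$ is odd and every $q_i$ is odd (e.g.\ $q=(1,3,3)$, $p=7$), then $p-q_i$ is even for \emph{every} choice of apex, and $\b'$ can genuinely acquire even denominators (take $\a_1=(0,\dots)$, $\a_2=(2,\dots)$, so the first coordinate of $\tfrac14\a_1+\tfrac34\a_2$ is $\tfrac32$). Your strengthened inductive hypothesis assumes the interior point has odd denominators, so it simply does not apply to $\b'$ in this case, and no choice of apex avoids it. The reflection trick you invoke from Lemma \ref{sec4-lm2} addresses the parity of the numerators of the \emph{interior} point $\b$, not the parity of the denominators or numerators of an \emph{endpoint} of the segment, which is what breaks here.

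The paper's proof resolves precisely this obstruction by making the splitting itself parity-sensitive: if $p$ is even, some $q_i$ is odd and that vertex is peeled off (so $p-q_i$ stays odd); if $p$ is odd and some $q_i$ is even, that vertex is peeled off; and in the remaining case ($p$ and all $q_i$ odd) it abandons the apex-projection entirely and instead writes $\b=\frac{q_1}{q_1+q_2}\b_1+\frac{q_2}{q_1+q_2}\b_2$ with $\b_1=\frac{q_1+q_2}{p}\a_1+\sum_{i\ge3}\frac{q_i}{p}\a_i$ and symmetrically for $\b_2$, so that both new points keep the odd denominator $p$ and have even numerators (each weight numerator $q_1+q_2$ or $q_i$ multiplies an even lattice point). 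Every intermediate point produced this way has odd denominators and even numerators, so the refined form of Lemma \ref{sec4-lm2} applies to each segment with both endpoints satisfying its hypotheses, and only $\b$ itself may carry odd numerators. Without this case analysis --- in particular without the two-point splitting in the all-odd case --- your induction does not close.
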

\begin{proof}
Suppose $\b=\sum_{i=1}^m\frac{q_i}{p}\a_i$, where $p=\sum_{i=1}^mq_i$, $p,q_i\in\N\backslash\{0\}$, gcd$(p,q_1,\ldots,q_m)=1$. If $p$ is an even number, then because gcd$(p,q_1,\ldots,q_m)=1$, there must exist an odd number among the $q_i$'s. Without loss of generality assume $q_1$ is an odd number. If $p$ is an odd number and there exists an even number among the $q_i$'s, then without loss of generality assume $q_1$ is an even number. In any of these two cases, we have
$$\b=\frac{q_1}{p}\a_1+\frac{p-q_1}{p}(\frac{q_2}{p-q_1}\a_2+\cdots+\frac{q_m}{p-q_1}\a_m).$$
Let $\b_{1}=\frac{q_2}{p-q_1}\a_2+\cdots+\frac{q_m}{p-q_1}\a_m$. Then $\b=\frac{q_1}{p}\a_1+\frac{p-q_1}{p}\b_{1}$.

If $p$ is an odd number and all $q_i$'s are odd numbers, then we have
\begin{align*}
\b&=\frac{q_1}{q_1+q_2}(\frac{q_1+q_2}{p}\a_1+\frac{q_3}{p}\a_3+\cdots+\frac{q_m}{p}\a_m)\\
&+\frac{q_2}{q_1+q_2}(\frac{q_1+q_2}{p}\a_2+\frac{q_3}{p}\a_3+\cdots+\frac{q_m}{p}\a_m).
\end{align*}
Let $\b_{1}=\frac{q_1+q_2}{p}\a_1+\frac{q_3}{p}\a_3+\cdots+\frac{q_m}{p}\a_m$ and $\b_{2}=\frac{q_1+q_2}{p}\a_2+\frac{q_3}{p}\a_3+\cdots+\frac{q_m}{p}\a_m$. Then $\b=\frac{q_1}{q_1+q_2}\b_{1}+\frac{q_2}{q_1+q_2}\b_{2}$.

Apply the same procedure for $\b_1$ (and $\b_2$), and continue iteratively. Eventually we obtain a set of points $\{\b_i\}_{i=1}^l$ such that for each $i$, $\b_i=\lambda_i\b_j+\mu_i\b_k$ or $\b_i=\lambda_i\b_j+\mu_i\a_k$ or $\b_i=\lambda_i\a_j+\mu_i\a_k$, where $\lambda_i+\mu_i=1,\lambda_i,\mu_i>0$. We claim that the denominators of coordinates of $\b_i$ are odd numbers, and the numerators of coordinates of $\b_i$ are even numbers. This is because for each $\b_i$, we have the expression $\b_i=\sum_{j}\frac{s_j}{r}\a_j$, where $r$ is an odd number and all $\a_j$'s are even lattice points. For $\b_i=\lambda\b_j+\mu\b_k$ (or $\b_i=\lambda\b_j+\mu\a_k$, $\b_i=\lambda\a_j+\mu\a_k$ respectively), let $M_i$ be the $\{\b_{j},\b_{k}\}$- (or $\{\b_{j},\a_{k}\}$-, $\{\a_{j},\a_{k}\}$- respectively) rational mediated set containing $\b_i$ obtained by Lemma \ref{sec4-lm2} such that the denominators of coordinates of points in $M_i$ are odd numbers and the numerators of coordinates of points in $M_i\backslash\{\b\}$ are even numbers for $i=0,\ldots,l$ (set $\b_0=\b$). Let $M_{\TT\b}=\cup_{i=0}^lM_i$. Then $M_{\TT\b}$ is clearly a $\TT$-rational mediated set containing $\b$ with the desired property.
\end{proof}

\subsection{Decomposing SONC with binomial squares}
\label{sec:soncbinomial}
For $r\in\N$ and $f(\x)\in\R[\x]$, let $f(\x^r):=f(x_1^r,\ldots,x_n^r)$. For an odd $r\in\N$,  it is clear that $f(\x)=\sum_{\a\in\TT}c_{\a}\x^{\a}-d\x^{\b}$ is a nonnegative circuit if and only if  $f(\x^r)=\sum_{\a\in\TT}c_{\a}\x^{r\a}-d\x^{r\b}$ is a nonnegative circuit.
\begin{theorem}\label{sec3-thm3}
Let $f=\sum_{\a\in\TT} c_{\a}\x^{\a}-d\x^{\b}\in\R[\x]$ with $d\ne0$ be a circuit polynomial. Assume that $M_{\TT\b}$ is a $\TT$-rational mediated set containing $\b$ provided by Lemma \ref{sec4-lm5}.
For each $\bu\in M_{\TT\b}\backslash\TT$, let $\bu=\frac{1}{2}(\bv_{\bu}+\bw_{\bu}), \bv_{\bu}\ne\bw_{\bu}\in M_{\TT\b}$. Then $f$ is nonnegative if and only if $f$ can be written as $f=\sum_{\bu\in M_{\TT\b}\backslash\TT}(a_{\bu}\x^{\frac{1}{2}\bv_{\bu}}-b_{\bu}\x^{\frac{1}{2}\bw_{\bu}})^2+c\x^{\a}$, $a_{\bu},b_{\bu}\in\R,c\ge0,\a\in\TT$.
\end{theorem}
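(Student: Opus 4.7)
My plan is to reduce the claim to Theorem \ref{sec3-thm1} via a global change of variable that promotes $M_{\TT\b}$ to a classical integer mediated set. First, I would let $r$ be the least common multiple of the denominators appearing in the coordinates of all points of $M_{\TT\b}$. By Lemma \ref{sec4-lm5} these denominators are all odd, hence $r$ itself is odd and the coordinatewise map $x_i\mapsto x_i^r$ is a bijection of $\R$. Consequently $f$ is nonnegative on $\R^n$ if and only if $g(\x):=f(\x^r)=\sum_{\a\in\TT}c_{\a}\x^{r\a}-d\x^{r\b}$ is, and $g$ is now an integer-exponent circuit polynomial with trellis $r\TT\subseteq(2\N)^n$ and inner exponent $r\b\in\N^n$.

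The second step is to verify that $N:=rM_{\TT\b}$ is an $r\TT$-mediated set in the classical sense containing $r\b$, with admissible even parents $(r\bv_{\bu},r\bw_{\bu})$ obtained by scaling those prescribed by the rational mediated set structure. By Lemma \ref{sec4-lm5}, every point of $M_{\TT\b}\setminus\{\b\}$ has even numerators, so its $r$-scaling lies in $(2\N)^n$; hence $N\setminus\{r\b\}\subseteq(2\N)^n$. If $\b\in(2\N)^n$, then $r\b\in(2\N)^n$ as well and the condition is automatic. Otherwise $r\b$ carries an odd coordinate, and the crucial parity observation is that $\b$ cannot appear as a parent of any $\bu\in M_{\TT\b}\setminus\{\b\}$: assuming $\bu=\tfrac{1}{2}(\b+\bw_{\bu})$, multiplying by $2r$ gives $r\b=2r\bu-r\bw_{\bu}$, which is coordinatewise even because $r\bu$ and $r\bw_{\bu}$ both lie in $(2\N)^n$, contradicting the odd coordinate of $r\b$. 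Applied to $\bu=\b$, the equality $\b=\tfrac{1}{2}(\bv_{\b}+\bw_{\b})$ together with $\bv_{\b}\ne\bw_{\b}$ rules out either parent equaling $\b$, so $\bv_{\b},\bw_{\b}\in M_{\TT\b}\setminus\{\b\}$ have even numerators and $r\bv_{\b},r\bw_{\b}\in N\cap(2\N)^n$. Thus every non-vertex of $N$ is the average of two distinct points of $N\cap(2\N)^n$, and the hypotheses of Theorem \ref{sec3-thm1} are met.

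Applying Theorem \ref{sec3-thm1} to $g$ with this mediated set and parent assignment produces an identity $g(\x)=\sum_{\bu\in M_{\TT\b}\setminus\TT}(a_{\bu}\x^{r\bv_{\bu}/2}-b_{\bu}\x^{r\bw_{\bu}/2})^2$ with $a_{\bu},b_{\bu}\in\R$, and substituting back via $\x\mapsto\x^{1/r}$, which is globally defined on $\R^n$ since $r$ is odd, yields the desired representation $f(\x)=\sum_{\bu\in M_{\TT\b}\setminus\TT}(a_{\bu}\x^{\frac{1}{2}\bv_{\bu}}-b_{\bu}\x^{\frac{1}{2}\bw_{\bu}})^2$. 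The converse implication is immediate, since every summand is nonnegative on $\R_+^n$, whence $f\ge 0$ on $\R_+^n$; because $\TT\subseteq(2\N)^n$, replacing $x_i$ by $|x_i|$ propagates the inequality to all of $\R^n$. The main obstacle is the parity argument above, which legitimizes the reduction: absent that contradiction, an odd coordinate of $r\b$ appearing as a parent of an even scaled point would invalidate the classical mediated set hypothesis of Theorem \ref{sec3-thm1} and the reduction would collapse.
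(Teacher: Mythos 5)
Your overall route is the same as the paper's: let $r$ be the odd least common multiple of the denominators, pass to $f(\x^r)$, check that $rM_{\TT\b}$ is a genuine $r\TT$-mediated set containing $r\b$, invoke Theorem \ref{sec3-thm1}, and undo the substitution using that $\x\mapsto\x^r$ is a bijection of $\R^n$ for odd $r$. Your explicit parity verification that no point of $rM_{\TT\b}\setminus r\TT$ needs the (possibly odd) point $r\b$ as a parent is more careful than the paper, which asserts the mediated-set property without proof; that part is correct.

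The step that does not hold as written is the converse. From the decomposition you infer $f\ge 0$ on $\R_+^n$ and then claim that, because $\TT\subseteq(2\N)^n$, replacing $x_i$ by $|x_i|$ propagates the inequality to all of $\R^n$. For a circuit polynomial $\sum_{\a\in\TT}c_{\a}\x^{\a}-d\x^{\b}$ this propagation amounts to $f(\x)\ge f(|\x|)$, i.e. $d\x^{\b}\le d|\x|^{\b}$, which is guaranteed only when $d\ge 0$; the theorem allows $d<0$ with $\b\notin(2\N)^n$, and then nonnegativity on the positive orthant does not imply global nonnegativity. For instance $x^2+y^2+3xy$ is a circuit polynomial with $d=-3$, is nonnegative on $\R_+^2$, yet equals $-1$ at $(1,-1)$. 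The repair is already contained in your own setup and is what the paper does: after the substitution $\x\mapsto\x^r$ every summand $(a_{\bu}\x^{\frac{r}{2}\bv_{\bu}}-b_{\bu}\x^{\frac{r}{2}\bw_{\bu}})^2$ is a genuine binomial square with integer exponents (since $r\bv_{\bu},r\bw_{\bu}\in(2\N)^n$), so $f(\x^r)\ge 0$ on all of $\R^n$, and bijectivity of the odd power map yields $f\ge 0$ on $\R^n$. With that one-line replacement the argument is complete.
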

\begin{proof}
Assume that the least common multiple of denominators appearing in the coordinates of points in $M_{\TT\b}$ is $r$, which is odd. Then $f(\x)$ is nonnegative if and only if $f(\x^r)$ is nonnegative. Multiply all coordinates of points in $M_{\TT\b}$ by $r$ to remove the denominators, and the obtained $rM_{\TT\b}:=\{r\bu\mid\bu\in M_{\TT\b}\}$ is an $r\TT$-mediated set containing $r\b$. Hence by Theorem \ref{sec3-thm1}, $f(\x^r)$ is nonnegative if and only if $f(\x^r)$ can be written as $f(\x^r)=\sum_{\bu\in M_{\TT\b}\backslash \TT}(a_{\bu}\x^{\frac{r}{2}\bv_{\bu}}-b_{\bu}\x^{\frac{r}{2}\bw_{\bu}})^2+c\x^{r\a}$, $a_{\bu},b_{\bu}\in\R,c\ge0,\a\in\TT$, which is equivalent to $f(\x)=\sum_{\bu\in M_{\TT\b}\backslash\TT}(a_{\bu}\x^{\frac{1}{2}\bv_{\bu}}-b_{\bu}\x^{\frac{1}{2}\bw_{\bu}})^2+c\x^{\a}$.
\end{proof}

\begin{example}
Let $f=x^4y^2+x^2y^4+1-3x^2y^2$ be the Motzkin polynomial and $\TT=\{\a_1=(0,0),\a_2=(4,2),\a_3=(2,4)\}$, $\b=(2,2)$. 
Let $\b_1=\frac{1}{3}\a_1+\frac{2}{3}\a_2$ and $\b_2=\frac{1}{3}\a_1+\frac{2}{3}\a_3$ such that $\b=\frac{1}{2}\b_1+\frac{1}{2}\b_2$. Let $\b_3=\frac{2}{3}\a_1+\frac{1}{3}\a_2$ and $\b_4=\frac{2}{3}\a_1+\frac{1}{3}\a_3$. Then  $M=\{\a_1,\a_2,\a_3,\b,\b_1,\b_2,\b_3,\b_4\}$ is a $\TT$-rational mediated set containing $\b$.

\begin{center}
\begin{tikzpicture}
\draw (0,0)--(2,4);
\draw (0,0)--(4,2);
\draw (2,4)--(4,2);
\draw (4/3,8/3)--(8/3,4/3);
\fill (0,0) circle (2pt);
\node[above left] (1) at (0,0) {$(0,0)$};
\node[below left] (1) at (0,0) {$\a_1$};
\fill (2,4) circle (2pt);
\node[above right] (2) at (2,4) {$(2,4)$};
\node[above left] (2) at (2,4) {$\a_3$};
\fill (4,2) circle (2pt);
\node[above right] (3) at (4,2) {$(4,2)$};
\node[below right] (3) at (4,2) {$\a_2$};
\fill (2,2) circle (2pt);
\node[above right] (4) at (2,2) {$(2,2)$};
\node[below left] (4) at (2,2) {$\b$};
\fill (4/3,8/3) circle (2pt);
\node[above left] (5) at (4/3,8/3) {$(\frac{4}{3},\frac{8}{3})$};
\node[below left] (5) at (4/3,8/3) {$\b_2$};
\fill (8/3,4/3) circle (2pt);
\node[below right] (6) at (8/3,4/3) {$(\frac{8}{3},\frac{4}{3})$};
\node[below left] (6) at (8/3,4/3) {$\b_1$};
\fill (2/3,4/3) circle (2pt);
\node[above left] (5) at (2/3,4/3) {$(\frac{2}{3},\frac{4}{3})$};
\node[below left] (5) at (2/3,4/3) {$\b_4$};
\fill (4/3,2/3) circle (2pt);
\node[below right] (6) at (4/3,2/3) {$(\frac{4}{3},\frac{2}{3})$};
\node[below left] (6) at (4/3,2/3) {$\b_3$};
\end{tikzpicture}
\end{center}

By Theorem \ref{sec3-thm3}, one has  $f=x^4y^2+x^2y^4+1-3x^2y^2=(a_1x^{\frac{2}{3}}y^{\frac{4}{3}}-b_1x^{\frac{4}{3}}y^{\frac{2}{3}})^2+(a_2xy^2-b_2x^{\frac{1}{3}}y^{\frac{2}{3}})^2+(a_3x^{\frac{2}{3}}y^{\frac{4}{3}}-b_3)^2
+(a_4x^2y-b_4x^{\frac{2}{3}}y^{\frac{1}{3}})^2+(a_5x^{\frac{4}{3}}y^{\frac{2}{3}}-b_5)^2$.
Comparing coefficients yields  $f=\frac{3}{2}(x^{\frac{2}{3}}y^{\frac{4}{3}}-x^{\frac{4}{3}}y^{\frac{2}{3}})^2+(xy^2-x^{\frac{1}{3}}y^{\frac{2}{3}})^2+\frac{1}{2}(x^{\frac{2}{3}}y^{\frac{4}{3}}-1)^2
+(x^2y-x^{\frac{2}{3}}y^{\frac{1}{3}})^2+\frac{1}{2}(x^{\frac{4}{3}}y^{\frac{2}{3}}-1)^2$, a sum of five binomial squares with rational exponents.
\end{example}

\begin{lemma}\label{sec3-lm6}
Let $f(\x)\in\R[\x]$. For an odd number $r$, $f(\x)\in\SONC$ if and only if $f(\x^r)\in\SONC$.
\end{lemma}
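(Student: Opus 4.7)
The proof splits into two implications. For the forward direction, the plan is to start from any SONC decomposition $f = \sum_i f_i$ and substitute $\x \mapsto \x^r$ to obtain $f(\x^r) = \sum_i f_i(\x^r)$. The remark stated just before Theorem \ref{sec3-thm3}---that a circuit polynomial $\sum_{\a \in \TT} c_\a \x^\a - d\x^\b$ is a nonnegative circuit if and only if $\sum_{\a \in \TT} c_\a \x^{r\a} - d\x^{r\b}$ is, which is valid since $r$ being odd ensures $\TT \subseteq (2\N)^n \Leftrightarrow r\TT \subseteq (2\N)^n$ and $\b \in (2\N)^n \Leftrightarrow r\b \in (2\N)^n$, while the simplex-vertex structure and the circuit number (read from the same barycentric coefficients and coefficients $c_\a$) are visibly invariant under scaling by $r$---immediately closes this direction.

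For the converse, the idea is to invert each nonnegative circuit appearing in the canonical SONC decomposition (\ref{sec2-eq1}) of $f(\x^r)$. Assume $f(\x^r) \in \SONC$. Because $r$ is odd, $\Lambda(f(\x^r)) = r\cdot \Lambda(f)$ and $\Gamma(f(\x^r)) = r\cdot \Gamma(f)$. Applying (\ref{sec2-eq1}) to $f(\x^r)$ produces a decomposition whose nonnegative-circuit summands $h_{\tilde{\TT}\tilde{\b}}$ each have support $\tilde{\TT} \cup \{\tilde{\b}\} \subseteq r\cdot\supp(f)$. This support restriction is the crucial point: writing $\tilde{\TT} = r\TT$ and $\tilde{\b} = r\b$ with $\TT \subseteq \Lambda(f)$ and $\b \in \Gamma(f)$, I would define $g_{\TT\b}$ by the inverse monomial substitution $\x^{r\a} \mapsto \x^{\a}$ on $h_{\tilde{\TT}\tilde{\b}}$, so that $g_{\TT\b}(\x^r) = h_{\tilde{\TT}\tilde{\b}}(\x)$; applying the preliminary remark in the converse direction then shows that each $g_{\TT\b}$ is itself a nonnegative circuit. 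The same unscaling is performed termwise on the tail $\sum_{\tilde{\a}\in\tilde{\A}} c_{\tilde{\a}}\x^{\tilde{\a}}$, which yields monomial squares since $\tilde{\a}/r \in \Lambda(f) \subseteq (2\N)^n$.

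Reassembling these pieces produces a polynomial $f'$ that is visibly a sum of nonnegative circuits and monomial squares, and satisfies $f'(\x^r) = f(\x^r)$ by construction. Since the substitution $\x \mapsto \x^r$ sends distinct monomials $\x^{\a}$ to distinct monomials $\x^{r\a}$, the induced ring map on $\R[\x]$ is injective, so $f = f'$, giving the desired SONC decomposition of $f$. The main subtlety, and the reason for invoking (\ref{sec2-eq1}) rather than an arbitrary SONC decomposition, is exactly this support constraint: a generic SONC decomposition of $f(\x^r)$ could in principle employ nonnegative circuits supported on lattice points outside $r\cdot\supp(f)$, and such circuits could not be directly unscaled into polynomials of $\R[\x]$. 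Formula (\ref{sec2-eq1}) enforces precisely the support control that makes the unscaling step well-defined, so the entire converse argument hinges on it.
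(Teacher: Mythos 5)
Your proof is correct and rests on exactly the same key fact the paper's one-line proof cites, namely the per-circuit equivalence of $f(\x)$ and $f(\x^r)$ being nonnegative circuits for odd $r$. The only difference is that you make the converse direction fully explicit by invoking the support-controlled decomposition \eqref{sec2-eq1} to justify the unscaling step --- a genuine subtlety that the paper's proof leaves implicit --- so this is the same approach, carried out in more detail.
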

\begin{proof}
It comes from the fact that $f(\x)$ is a nonnegative circuit if and only if $f(\x^r)$ is a nonnegative circuit  for an odd number $r$.
\end{proof}

\begin{theorem}\label{sec3-thm5}
Let $f=\sum_{\a\in\Lambda(f)}c_{\a}\x^{\a}-\sum_{\b\in\Gamma(f)}d_{\b}\x^{\b}\in\R[\x]$. Let $\CC(\b)$ be as in \eqref{fb}. For every $\b\in\Gamma(f)$ and every trellis $\TT\in\CC(\b)$, let $M_{\TT\b}$ be a $\TT$-rational mediated set containing $\b$ provided by Lemma \ref{sec4-lm5}.
Let $M=\cup_{\b\in\Gamma(f)}\cup_{\TT\in\CC(\b)}M_{\TT\b}$. For each $\bu\in M\backslash\Lambda(f)$, let $\bu=\frac{1}{2}(\bv_{\bu}+\bw_{\bu}),\bv_{\bu}\ne\bw_{\bu}\in M$. Let $\tilde{\A}=\{\a\in\Lambda(f)\mid\a\notin\cup_{\b\in\Gamma(f)}\cup_{\TT\in\CC(\b)}\TT\}$. Then $f\in\SONC$ if and only if $f$ can be written as $f=\sum_{\bu\in M\backslash\Lambda(f)}(a_{\bu}\x^{\frac{1}{2}\bv_{\bu}}-b_{\bu}\x^{\frac{1}{2}\bw_{\bu}})^2+\sum_{\b\in\Gamma(f)}c'_{\a_{\b}}\x^{\a_{\b}}+\sum_{\a\in \tilde{\A}}c_{\a}\x^{\a}$, $a_{\bu},b_{\bu}\in\R,c'_{\a_{\b}}\ge0,\a_{\b}\in\CC(\b),\b\in\Gamma(f)$.
\end{theorem}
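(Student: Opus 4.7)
My plan is to reduce Theorem~\ref{sec3-thm5} to the single-circuit case (Theorem~\ref{sec3-thm3}): for the ($\Rightarrow$) implication I would exploit the canonical decomposition~\eqref{sec2-eq1} splitting $f$ into nonnegative circuits, while for the ($\Leftarrow$) implication I would use the substitution $\x\mapsto\x^r$ together with Lemma~\ref{sec3-lm6} to reduce to the integer-exponent setting.

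\emph{Forward direction.} Assume $f\in\SONC$. By~\eqref{sec2-eq1}, one writes $f=\sum_{\b\in\Gamma(f)}\sum_{\TT\in\CC(\b)} f_{\TT\b}+\sum_{\a\in\tilde{\A}} c_{\a}\x^{\a}$, where each $f_{\TT\b}$ is a nonnegative circuit polynomial supported on $\TT\cup\{\b\}$. I would apply Theorem~\ref{sec3-thm3} to each $f_{\TT\b}$ with the rational mediated set $M_{\TT\b}$, obtaining a sum-of-binomial-squares decomposition of $f_{\TT\b}$ indexed by $\bu\in M_{\TT\b}\setminus\TT$. Summing across all $(\TT,\b)$ and using $\bigcup_{\TT,\b}\TT\subseteq\Lambda(f)$, the resulting binomial squares are indexed by a subset of $M\setminus\Lambda(f)$, matching the required form (with the $\tilde{\A}$-monomials absorbing any term whose exponent lies in $\Lambda(f)$).

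\emph{Backward direction.} Assume $f$ admits the claimed decomposition and let $r$ be the least common multiple of all denominators appearing in the coordinates of points of $M$; by Lemma~\ref{sec4-lm5}, $r$ is odd. Substituting $\x\mapsto\x^r$ gives
\[
f(\x^r)=\sum_{\bu\in M\setminus\Lambda(f)}\bigl(a_{\bu}\x^{r\bv_{\bu}/2}-b_{\bu}\x^{r\bw_{\bu}/2}\bigr)^2+\sum_{\a\in\tilde{\A}} c_{\a}\x^{r\a}.
\]
By Lemma~\ref{sec4-lm5}, the numerators of coordinates of points in $M\setminus\{\b\in\Gamma(f)\}$ are even, so $r\bv_\bu/2$ and $r\bw_\bu/2$ are lattice points in $\N^n$; similarly $r\a\in(2\N)^n$ since $\a\in\Lambda(f)\subseteq(2\N)^n$ and $r$ is odd. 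Thus each binomial square in the identity is a genuine nonnegative two-term circuit polynomial and each $c_\a\x^{r\a}$ is a monomial square, whence $f(\x^r)\in\SONC$. Lemma~\ref{sec3-lm6} then transfers this back to $f(\x)\in\SONC$.

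\emph{Main obstacle.} The delicate point lies in the forward direction: a given $\bu\in M\setminus\Lambda(f)$ may belong to several mediated sets $M_{\TT\b}$, each naturally suggesting its own splitting $(\bv_\bu,\bw_\bu)$, while the statement asks for a single splitting per $\bu$. I would commit a priori to the splittings produced by the recursive construction of Lemma~\ref{sec4-lm5}, which remain compatible across the various $M_{\TT\b}$ sharing $\bu$, and use the two degrees of freedom in $(a_\bu,b_\bu)$ per $\bu$ to align the coefficients of $f$ with the expansion of the binomial squares.
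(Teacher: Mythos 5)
Your proposal follows essentially the same route as the paper's proof: the forward direction uses the canonical SONC decomposition \eqref{sec2-eq1} together with Theorem \ref{sec3-thm3} applied to each nonnegative circuit $f_{\TT\b}$, and the backward direction uses the substitution $\x\mapsto\x^r$ with $r$ the odd least common multiple of the denominators, followed by Lemma \ref{sec3-lm6}. The consistency-of-splittings issue you flag is real but is glossed over by the paper as well, and your resolution (fixing the splittings from Lemma \ref{sec4-lm5} once and for all) is in the same spirit, so this is the same argument.
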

\begin{proof}
Suppose $f\in\SONC$. By Theorem 5.5 in \cite{wang}, we can write $f$ as
\[
f=\sum_{\b\in\Gamma(f)}\sum_{\TT\in\CC(\b)}f_{\TT\b}+\sum_{\a\in \tilde{\A}}c_{\a}\x^{\a} \,,
\]
such that every $f_{\TT\b}$ is a nonnegative circuit supported on $\TT\cup\{\b\}$. 
We have 
\[
f_{\TT\b}=\sum_{\bu\in M_{\TT\b}\backslash\TT}(a_{\bu}\x^{\frac{1}{2}\bv_{\bu}}-b_{\bu}\x^{\frac{1}{2}\bw_{\bu}})^2+c'_{\a_{\b}}\x^{\a_{\b}} \,,
\] 
for some $a_{\bu},b_{\bu}\in\R,c'_{\a_{\b}}\ge0,\a_{\b}\in\CC(\b)$ by Theorem \ref{sec3-thm3}. 
Thus there exist $a_{\bu},b_{\bu}\in\R,c'_{\a_{\b}}\ge0,\a_{\b}\in\CC(\b),\b\in\Gamma(f)$ such that 
\[
f=\sum_{\bu\in M\backslash\Lambda(f)}(a_{\bu}\x^{\frac{1}{2}\bv_{\bu}}-b_{\bu}\x^{\frac{1}{2}\bw_{\bu}})^2+\sum_{\b\in\Gamma(f)}c'_{\a_{\b}}\x^{\a_{\b}}+\sum_{\a\in \tilde{\A}}c_{\a}\x^{\a} \,.
\]
Now suppose that $f$ has the desired form. 
Assume that the least common multiple of denominators appearing in the coordinates of points in $M$ is an odd positive integer $r$. 
Then there exist $a_{\bu},b_{\bu}\in\R,c'_{\a_{\b}}\ge0,\a_{\b}\in\CC(\b),\b\in\Gamma(f)$ such that
\[
f(\x^r)=\sum_{\bu\in M\backslash\Lambda(f)}(a_{\bu}\x^{\frac{r}{2}\bv_{\bu}}-b_{\bu}\x^{\frac{r}{2}\bw_{\bu}})^2+\sum_{\b\in\Gamma(f)}c'_{\a_{\b}}\x^{r\a_{\b}}+\sum_{\a\in \tilde{\A}}c_{\a}\x^{r\a} \,.
\]
Thus $f(\x^r)\in\SONC$ since every binomial or monomial square is a nonnegative circuit. 
Hence by Lemma \ref{sec3-lm6}, $f(\x)\in\SONC$.
\end{proof}

\section{SOC representations of SONC cones}
\label{sec:socrepr}

SOCP plays an important role in convex optimization and can be handled via very efficient algorithms. If a SOC representation exists for a given convex cone, then it is possible to design efficient algorithms for optimization problems over the convex cone. In \cite{fa}, Fawzi proved that PSD cones do not admit any SOC representations in general, which implies that SOS cones do not admit any SOC representations in general. In this section, we prove that dramatically unlike the SOS cones, SONC cones always admit SOC representations. Let us discuss it in more details. 
Let $\mathcal{Q}^k:=\mathcal{Q}\times\cdots \times \mathcal{Q}$ be the Cartesian product of $k$ copies of a SOC $\mathcal{Q}$. A {\em linear slice} of $\mathcal{Q}^k$ is an intersection of $\mathcal{Q}^k$ with a linear subspace.
\begin{definition}
A convex cone $C\subseteq\R^m$ has a {\em SOC lift of size $k$} (or simply a {\em $\mathcal{Q}^k$-lift}) if it can be written as the projection of a slice of $\mathcal{Q}^k$, that is, there is a subspace $L$ of $\mathcal{Q}^k$ and a linear map $\pi\colon\mathcal{Q}^k\rightarrow\R^m$ such that $C=\pi(\mathcal{Q}^k\cap L)$.
\end{definition}

We give the following definition of SONC cones supported on given lattice points.
\begin{definition}\label{def-cone}
Given sets of lattice points $\A\subseteq(2\N)^n$, $\B_1\subseteq\Conv(\A)\cap(2\N)^n$ and $\B_2\subseteq\Conv(\A)\cap(\N^n\backslash(2\N)^n)$ such that $\A\cap\B_1=\varnothing$, define the SONC cone supported on $\A,\B_1,\B_2$ as
\begin{align*}
\SONC_{\A,\B_1,\B_2}:=& \{(\mathbf{c}_{\A},\mathbf{d}_{\B_1},\mathbf{d}_{\B_2})\in\R_+^{|\A|}\times\R_+^{|\B_1|}\times\R^{|\B_2|}\\
&\mid\sum_{\a\in\A} c_{\a}\x^{\a}-\sum_{\b\in\B_1\cup\B_2}d_{\b}\x^{\b}\in\SONC \},
\end{align*}
where $\mathbf{c}_{\A}=(c_{\a})_{\a\in\A}$, $\mathbf{d}_{\B_1}=(d_{\b})_{\b\in\B_1}$ and $\mathbf{d}_{\B_2}=(d_{\b})_{\b\in\B_2}$. It is easy to check that $\SONC_{\A,\B_1,\B_2}$ is indeed a convex cone.
\end{definition}

Let $\mathbb{S}^2_+$ be the convex cone of $2\times2$ positive semidefinite matrices
\[\mathbb{S}^2_+:=\left\{\begin{bmatrix}a&b\\b&c\end{bmatrix}\in\R^{2\times2}\mid\begin{bmatrix}a&b\\b&c\end{bmatrix}\textrm{ is positive semidefinite}\right\}.\]
\begin{lemma}
$\mathbb{S}^2_+$ is a $3$-dimensional rotated SOC.
\end{lemma}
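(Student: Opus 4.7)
The plan is to exhibit an explicit linear isomorphism between $\mathbb{S}^2_+$ (viewed as a subset of $\R^3$ via the three independent entries $a,b,c$) and the $3$-dimensional rotated second-order cone defined at the beginning of the paper, namely
\[
\mathbf{K}_3 = \{(a_1,a_2,a_3)\in\R^3 \mid 2a_1 a_2 \ge a_3^2,\ a_1 \ge 0,\ a_2 \ge 0\}.
\]

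First I would recall the elementary characterization of positive semidefiniteness for a symmetric $2\times 2$ matrix: $\begin{bmatrix}a&b\\b&c\end{bmatrix}\in\mathbb{S}^2_+$ if and only if $a\ge 0$, $c\ge 0$, and $\det = ac - b^2 \ge 0$. (One direction is Sylvester's criterion; the other is immediate by testing against the standard basis vectors and a suitable $(1,t)$ direction.)

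Next, I would define the linear map $\varphi\colon \mathbb{S}^2_+ \to \R^3$ by
\[
\varphi\!\left(\begin{bmatrix}a&b\\b&c\end{bmatrix}\right) = (a,\, c,\, \sqrt{2}\,b).
\]
Under this identification, the PSD conditions $a\ge 0$, $c\ge 0$, $ac\ge b^2$ are equivalent to $a\ge 0$, $c\ge 0$, $2 a c \ge (\sqrt{2}\,b)^2$, which is exactly membership in $\mathbf{K}_3$. Since $\varphi$ is a linear bijection of $\R^3$ onto $\R^3$ carrying $\mathbb{S}^2_+$ onto $\mathbf{K}_3$, the cones are linearly isomorphic, so $\mathbb{S}^2_+$ is itself a $3$-dimensional rotated SOC.

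There is essentially no obstacle here; the only subtlety worth flagging is the factor $\sqrt{2}$ in the off-diagonal coordinate, which is needed precisely to absorb the factor $2$ appearing on the left-hand side of the rotated-cone inequality $2a_1 a_2 \ge a_3^2$. Without this rescaling one would only get the weaker statement that $\mathbb{S}^2_+$ is isomorphic to $\mathbf{K}_3$ up to a diagonal scaling, which is of course sufficient as well, but the $\sqrt{2}$ normalization makes the identification literal.
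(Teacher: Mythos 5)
Your proposal is correct and follows essentially the same route as the paper, which simply notes that positive semidefiniteness of the $2\times 2$ matrix is equivalent to $a\ge 0$, $c\ge 0$, $ac\ge b^2$ and concludes "by the definition." Your version is just slightly more explicit in writing out the linear identification $(a,c,\sqrt{2}\,b)$ that reconciles $ac\ge b^2$ with the rotated-cone inequality $2a_1a_2\ge a_3^2$.
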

\begin{proof}
The condition for $\begin{bmatrix}a&b\\b&c\end{bmatrix}$ to be a positive semidefinite matrix is $a\ge0,c\ge0,ac\ge b^2$. 
Thus $\mathbb{S}^2_+$ is a rotated SOC by the definition.
\end{proof}

Now we are ready to prove the main theorem.
\begin{theorem}\label{socr}
For $\A\subseteq(2\N)^n$, $\B_1\subseteq\Conv(\A)\cap(2\N)^n$ and $\B_2\subseteq\Conv(\A)\cap(\N^n\backslash(2\N)^n)$ such that $\A\cap\B_1=\varnothing$, the convex cone $\SONC_{\A,\B_1,\B_2}$ has an $(\mathbb{S}^2_+)^k$-lift for some $k\in\N$.
\end{theorem}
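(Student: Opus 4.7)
The plan is to construct the $(\mathbb{S}^2_+)^k$-lift explicitly by reading Theorem \ref{sec3-thm5} as a parametrization of the SONC cone by binomial squares: each binomial square $(a \x^{\bv/2} - b \x^{\bw/2})^2 = a^2\x^{\bv} - 2ab\x^{\bu} + b^2\x^{\bw}$ has coefficient triple $(a^2, -ab, b^2)$ that coincides with the entries of the rank-one element $X = (a,-b)^{\top}(a,-b)$ of $\mathbb{S}^2_+$. Relaxing to arbitrary PSD matrices gives a convex family of contributions, and summing over a finite collection of such slots should realize the whole SONC cone.

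First I would enumerate the slots. For every trellis $\TT \subseteq \A$ and every $\b \in (\B_1 \cup \B_2) \cap \Conv(\TT)^{\circ}$, fix a $\TT$-rational mediated set $M_{\TT\b}$ containing $\b$ via Lemma \ref{sec4-lm5}. For every $\bu \in M_{\TT\b} \setminus \TT$, fix a representation $\bu = \tfrac{1}{2}(\bv_\bu + \bw_\bu)$ with $\bv_\bu, \bw_\bu \in M_{\TT\b}$, and collect these triples in a finite index set $\Omega$. Introduce a PSD variable $X_\omega \in \mathbb{S}^2_+$ for each $\omega \in \Omega$ and $Y_\a \in \mathbb{S}^2_+$ for each $\a \in \A$ (the latter to handle pure monomial-square terms), giving $k = |\Omega| + |\A|$ variables in total. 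Form the formal sum
\[
F \;=\; \sum_{\omega \in \Omega} \Bigl[(X_\omega)_{11} \x^{\bv_\omega} + 2(X_\omega)_{12} \x^{\bu_\omega} + (X_\omega)_{22} \x^{\bw_\omega}\Bigr] + \sum_{\a \in \A} (Y_\a)_{11} \x^{\a},
\]
whose exponents range over the finite set $E := \A \cup \bigcup_{\omega}\{\bv_\omega, \bu_\omega, \bw_\omega\} \subseteq \Q^n$. Define $L \subseteq (\mathbb{S}^2_+)^k$ by the linear conditions $(Y_\a)_{12} = (Y_\a)_{22} = 0$ for all $\a \in \A$ and by requiring the coefficient of $\x^{\e}$ in $F$ to vanish for every $\e \in E \setminus (\A \cup \B_1 \cup \B_2)$, and let $\pi$ be the linear map reading off the coefficients of $\x^{\a}$ for $\a \in \A$ together with the negatives of the coefficients of $\x^{\b}$ for $\b \in \B_1 \cup \B_2$.

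Then I would show $\pi\bigl(L \cap (\mathbb{S}^2_+)^k\bigr) = \SONC_{\A,\B_1,\B_2}$. Inclusion $\supseteq$ is a direct application of Theorem \ref{sec3-thm5}: given $f \in \SONC_{\A,\B_1,\B_2}$, the theorem hands out coefficients $(a_\bu, b_\bu)$ and $c_\a$ decomposing $f$ along the very same mediated sets, so one sets $X_\omega$ to the corresponding rank-one PSD matrix on the relevant slots (zero elsewhere) and $Y_\a = c_\a e_1 e_1^{\top}$. Inclusion $\subseteq$ requires proving that every feasible $(X_\omega, Y_\a)$ maps to a SONC polynomial: I would decompose each $X_\omega = vv^{\top} + ww^{\top}$ as a sum of two rank-one PSD matrices, each contributing a formal binomial square $(v_1 \x^{\bv_\omega/2} + v_2 \x^{\bw_\omega/2})^2$ with rational exponents, and rewrite $(Y_\a)_{11}\x^{\a}$ as $\bigl(\sqrt{(Y_\a)_{11}}\,\x^{\a/2}\bigr)^2$. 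Letting $r$ be the (odd) least common multiple of all denominators appearing in $E$, the substitution $\x \mapsto \x^r$ then turns $F(\x^r)$ into a sum of honest binomial/monomial squares with integer exponents, hence $F(\x^r) \in \SONC$, and Lemma \ref{sec3-lm6} transfers this back to $F(\x) \in \SONC$; combined with the support constraints this places $\pi((X,Y))$ inside $\SONC_{\A,\B_1,\B_2}$.

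The hardest step will be confirming that the post-substitution binomial squares really qualify as nonnegative circuits in the sense of the paper, i.e.\ that their outer exponents $r\bv_\omega$ and $r\bw_\omega$ always lie in $(2\N)^n$. This hinges on the fine-grained parity guarantees of Lemma \ref{sec4-lm5}: the odd-denominator property of $M_{\TT\b}$ makes $r$ odd (so Lemma \ref{sec3-lm6} is available), the even-numerator property of $M_{\TT\b} \setminus \{\b\}$ ensures $r\bv_\omega, r\bw_\omega \in (2\N)^n$ whenever neither endpoint equals $\b$, and a direct inspection of the constructions in Lemma \ref{sec4-lm5} and Lemma \ref{sec4-lm2} shows that when $\b \notin (2\N)^n$ the point $\b$ is inserted only as a midpoint and never as an endpoint of a representation. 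The remainder is routine bookkeeping with linear relations on PSD-matrix entries.
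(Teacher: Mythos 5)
Your proposal is correct and follows essentially the same route as the paper's proof: both reduce the statement to Theorem \ref{sec3-thm5}, identify each binomial square's coefficient triple $(a^2,-ab,b^2)$ with a (rank-one) element of $\mathbb{S}^2_+$ over the triples coming from the rational mediated sets of Lemma \ref{sec4-lm5}, and take $\pi$ to be the coefficient-reading map. The only difference is that you spell out details the paper leaves implicit (the explicit linear slice $L$, splitting a rank-two $Q_i$ into two rank-one matrices for the reverse inclusion, and the parity check that the substitution $\x\mapsto\x^r$ yields genuine nonnegative circuits), all of which are consistent with the paper's argument.
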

\begin{proof}
For every $\b\in\B_1\cup\B_2$, let $\CC(\b)$ be as in \eqref{fb}. Then
for every $\b\in\B_1\cup\B_2$ and every $\TT\in\CC(\b)$, let $M_{\TT\b}$ be the $\TT$-rational mediated set containing $\b$ provided by Lemma \ref{sec4-lm5}.
Let $M=\cup_{\b\in\B_1\cup\B_2}\cup_{\TT\in\CC(\b)}M_{\TT\b}$. 
For each $\bu_i\in M\backslash\A$, let us write  $\bu_i=\frac{1}{2}(\bv_i+\bw_i)$. Let $\tilde{\A}=\{\a\in\A\mid\a\notin\cup_{\b\in\B_1\cup\B_2}\cup_{\TT\in\CC(\b)}\TT\}$ and $k=\#M\backslash\A+\#\tilde{\A}$.

Then by Theorem \ref{sec3-thm5}, a polynomial $f$ is in $\SONC_{\A,\B_1,\B_2}$ if and only if $f$ can be written as $f=\sum_{\bu_i\in M\backslash\A}(a_{i}\x^{\frac{1}{2}\bv_i}-b_{i}\x^{\frac{1}{2}\bw_i})^2+\sum_{\b\in\Gamma(f)}c'_{\a_{\b}}\x^{\a_{\b}}+\sum_{\a\in \tilde{\A}}c_{\a}\x^{\a}$, $a_i,b_i\in\R,c'_{\a_{\b}}\ge0,\a_{\b}\in\CC(\b),\b\in\Gamma(f)$, which is equivalent to the existence of matrices $Q_1, \cdots, Q_k\in\mathbb{S}^2_+$ such that 
\begin{equation}\label{sobs}
f=\sum_{\bu_i\in M\backslash\A}\left[\frac{1}{2}\bv_i,\frac{1}{2}\bw_i\right]Q_i\left[\frac{1}{2}\bv_i,\frac{1}{2}\bw_i\right]^T+\sum_{\a\in \tilde{\A}\cup\{\a_{\b}\}_{\b\in\Gamma(f)},Q_i}\left[\frac{1}{2}\a,\mathbf{0}\right]Q_i\left[\frac{1}{2}\a,\mathbf{0}\right]^T.
\end{equation}

Let $\pi:(\mathbb{S}^2_+)^{k}\rightarrow\SONC_{\A,\B_1,\B_2}$
be the linear map that maps an element $(Q_1, \cdots, Q_k)$ in $(\mathbb{S}^2_+)^{k}$ to the coefficient vector of $f$ which is in $\SONC_{\A,\B_1,\B_2}$ via the equality \eqref{sobs}. Hence $\pi$ yields an $(\mathbb{S}^2_+)^k$-lift for $\SONC_{\A,\B_1,\B_2}$.
\end{proof}



\revision{{\bf Comparison with Averkov's construction on the size of SOC lifts.} Given a SONC cone, the SOC lift due to \cite{abe} is built on SOC lifts for hypographs of weighted geometric means. In his paper, Averkov appealed to the construction given in \cite{ben2001lectures} for SOC lifts of weighted geometric means, which is of size $\bigo(p)$ where $p$ is the least common denominator of the weights (i.e., the barycentric coordinates $\{\lambda_i\}$). 
It is possible to produce such lifts of smaller size. Indeed, \cite{sagnol2013} presented a construction on matrix geometric means, which yields a SOC lift of size $2\lfloor\log_2(p)\rfloor+1$ for bivariate weighted geometric means with $p$ being the denominator of the weight; see also \cite{fawzi2017lieb}. 
Our construction of mediated sequences also equivalently yields a SOC lift for bivariate weighted geometric means. 
Even though we can only prove a quadratic bound in terms of $\log_2(p)$ for the size of this construction (Lemma \ref{sec4-lm3}), we empirically found that it actually depends linearly on $\log_2(p)$, which is approximately $1.29\log_2(p)-0.26$. The following table provides a comparison on the sizes of different constructions for (hypographs of) SOC lifts of bivariate weighted geometric means for $p=10,10^2,\ldots,10^8$, where the average size of our construction is taken over all integers $q$ such that the weight $t=q/p$, $0<q<p$ and gcd$(p,q)=1$.

\begin{center}
\begin{tabular}{|c|c|c|c|c|c|c|c|c|}
\hline
the denominator $p$&$10$&$10^2$&$10^3$&$10^4$&$10^5$&$10^6$&$10^7$&$10^8$\\
\hline
the size predicted by Conjecture \ref{con}&4&7&10&14&17&20&24&27\\
\hline
the average size of our construction&4.0&8.4&12.5&16.8&21.2&25.4&29.7&34.0\\
\hline
the size of Sagnol's construction&7&13&19&27&33&39&47&53\\
\hline
\end{tabular}
\end{center}
}

\section{SONC optimization via SOCP}
\label{sec:optim}
In this section, we tackle the following unconstrained polynomial optimization problem via SOCP, based on the representation of SONC cones derived in the previous section:
%
\begin{equation}\label{sec3-eq00}
(\textrm{P}):\quad
 \sup \{\xi : f(\x)-\xi\ge0,\quad\x\in\R^n
\} \,.
\end{equation}
\if{
\begin{equation}\label{sec3-eq00}
(\textrm{P}):\quad\begin{cases}
\sup&\xi\\
\textrm{s.t.}&f(\x)-\xi\ge0,\quad\x\in\R^n.
\end{cases}
\end{equation}
}\fi
Let us  denote by $\xi^*$ the optimal value of (\ref{sec3-eq00}).
Replace the nonnegativity constraint in (\ref{sec3-eq00}) by the following one to obtain a SONC relaxation with optimal value denoted by $\xi_{sonc}$:
\begin{equation}\label{sec3-eq5}
(\textrm{SONC}):\quad
\sup \{\xi : f(\x)-\xi\in\SONC \} \,.
\end{equation}
\if{
\begin{equation}\label{sec3-eq5}
(\textrm{SONC}):\quad\begin{cases}
\sup&\xi\\
\textrm{s.t.}&f(\x)-\xi\in\SONC.
\end{cases}
\end{equation}
}\fi

\subsection{Conversion to PN-polynomials}
Suppose $f=\sum_{\a\in\Lambda(f)}c_{\a}\x^{\a}-\sum_{\b\in\Gamma(f)}d_{\b}\x^{\b}\in\R[\x]$. If $d_{\b}>0$ for all $\b\in\Gamma(f)$, then we call $f$ a {\em PN-polynomial}. The prefix
``PN" in PN-polynomial is short for ``positive part plus negative part".
The positive part is given by $\sum_{\a\in\Lambda(f)}c_{\a}\x^{\a}$ and the negative part is given by $-\sum_{\b\in\Gamma(f)}d_{\b}\x^{\b}$. 
For a PN-polynomial $f(\x)$, it is clear that
$f(\x)\ge0\textrm{ for all }\x\in\R^n$ if and only if  $f(\x)\ge0\textrm{ for all }\x\in\R_+^n$.
Moreover, we have the following lemma.
\begin{lemma}\label{sec4-lm}
Let $f(\x)\in\R[\x]$ be a PN-polynomial. Then for any positive integer $k$, $f(\x)\in\SONC$ if and only if $f(\x^k)\in\SONC$.
\end{lemma}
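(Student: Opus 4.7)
The plan is to treat the two directions separately; the forward direction holds for any polynomial, while the reverse essentially uses the PN hypothesis.

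For $f(\x) \in \SONC \Rightarrow f(\x^k) \in \SONC$, I would write $f = \sum_i g_i$ with each $g_i = \sum_{\a \in \TT_i} c_{\a,i}\x^\a - d_i \x^{\b_i}$ a nonnegative circuit (or a monomial square), and observe that the substitution $\x \mapsto \x^k$ sends $g_i$ to $g_i(\x^k) = \sum_\a c_{\a,i}\x^{k\a} - d_i \x^{k\b_i}$. This remains a circuit polynomial: $k\TT_i \subseteq (2\N)^n$, the point $k\b_i$ lies in the interior of $\Conv(k\TT_i)$ with the same barycentric coordinates as $\b_i$ in $\Conv(\TT_i)$, and consequently the circuit number is preserved, $\Theta_{g_i(\x^k)} = \Theta_{g_i}$. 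Nonnegativity of $g_i$ transfers to $g_i(\x^k)$, since the sign condition on $d_i$ can only get weaker when $k\b_i$ happens to be even while $\b_i$ is not. Summing yields $f(\x^k) \in \SONC$.

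For $f(\x^k) \in \SONC \Rightarrow f(\x) \in \SONC$, I would apply the cancellation-free decomposition \eqref{sec2-eq1} to $f(\x^k)$. The PN hypothesis yields the clean identifications $\Lambda(f(\x^k)) = k\Lambda(f)$ and $\Gamma(f(\x^k)) = k\Gamma(f)$, and every trellis $\TT' \in \CC(k\b)$ (for $f(\x^k)$) has the form $k\tilde{\TT}$ for some $\tilde{\TT} \in \CC(\b)$ (for $f$). Each circuit $f_{k\tilde{\TT},k\b}(\x) = \sum_{\a \in \tilde{\TT}} c_{\a}^{(\tilde{\TT})} \x^{k\a} - d^{(\tilde{\TT})} \x^{k\b}$ factors as $g_{\tilde{\TT},\b}(\x^k)$ with $g_{\tilde{\TT},\b}(\y) = \sum_{\a \in \tilde{\TT}} c_{\a}^{(\tilde{\TT})} \y^\a - d^{(\tilde{\TT})} \y^\b$, and likewise the leftover positive monomials rewrite as $H(\x^k)$ for some positive monomial polynomial $H$. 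Equating coefficients in $f(\x^k) = (\sum_{\tilde{\TT},\b} g_{\tilde{\TT},\b} + H)(\x^k)$ gives the identity $f = \sum_{\tilde{\TT},\b} g_{\tilde{\TT},\b} + H$ as polynomials in $\x$.

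It remains to verify that each $g_{\tilde{\TT},\b}$ is a nonnegative circuit, and this is where I expect the main obstacle to lie. Support conditions and the circuit number transfer verbatim, so nonnegativity of $f_{k\tilde{\TT},k\b}$ only yields $d^{(\tilde{\TT})} \leq \Theta^{(\tilde{\TT})}$, whereas $g_{\tilde{\TT},\b}$ requires the stronger $|d^{(\tilde{\TT})}| \leq \Theta^{(\tilde{\TT})}$ when $\b \notin (2\N)^n$ and $k$ is even. To exclude the case $d^{(\tilde{\TT})} < 0$, I would normalize the decomposition: fix $\b \in \Gamma(f)$, set $S^\pm := \sum\{|d^{(\tilde{\TT})}| : \pm d^{(\tilde{\TT})} > 0\}$, and note that matching the coefficient of $\x^{k\b}$ gives $\sum_{\tilde{\TT}} d^{(\tilde{\TT})} = d_\b$, so the PN hypothesis $d_\b > 0$ forces $S^+ > S^-$. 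Rescaling every positive $d^{(\tilde{\TT})}$ by the factor $1 - S^-/S^+ \in (0,1]$ while keeping the $c_\a^{(\tilde{\TT})}$ unchanged (which only strengthens the bound $d \leq \Theta$) and absorbing the positive monomials from the circuits with $d^{(\tilde{\TT})} \leq 0$ into the leftover produces an equivalent SONC decomposition of $f(\x^k)$ in which every $d^{(\tilde{\TT})} \geq 0$. Under this normalization, $|d^{(\tilde{\TT})}| = d^{(\tilde{\TT})} \leq \Theta^{(\tilde{\TT})}$, hence each $g_{\tilde{\TT},\b}$ is a nonnegative circuit and $f \in \SONC$.
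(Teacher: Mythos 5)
Your proof is correct, and it is considerably more explicit than the paper's, which disposes of the lemma in one line by citing the fact that a polynomial with \emph{exactly one negative term} is a nonnegative circuit if and only if its image under $\x\mapsto\x^k$ is. That one-liner implicitly relies on the decomposition \eqref{sec2-eq1} of the PN-polynomial $f(\x^k)$ being cancellation-free, so that every circuit appearing in it already has a genuinely negative inner coefficient; your proof does not assume this and instead repairs an arbitrary decomposition. The underlying mechanism is the same in both arguments (barycentric coordinates, hence the circuit number $\Theta$, are invariant under $\a\mapsto k\a$, and for $d>0$ the nonnegativity criterion reads $d\le\Theta$ regardless of the parity of the inner exponent), but your normalization step is the genuinely new content: you correctly isolate the only problematic case ($\b$ odd, $k$ even, some circuit with $d^{(\tilde\TT)}<-\Theta^{(\tilde\TT)}$ possible a priori) and your choice of scaling factor $1-S^-/S^+=d_\b/S^+$ is exactly right — the reduction $S^+-\lambda S^+=S^-$ in the inner coefficients of the positive circuits precisely cancels the inner terms $+|d^{(\tilde\TT)}|\x^{k\b}$ of the discarded circuits, so the identity $f(\x^k)=\sum_{\tilde\TT}g'_{\tilde\TT}+H'$ is preserved. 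One small presentational caveat: ``absorbing the positive monomials from the circuits with $d^{(\tilde\TT)}\le0$ into the leftover'' should be read as absorbing only their outer parts $\sum_{\a}c_\a^{(\tilde\TT)}\x^{k\a}$ (which are monomial squares since $\tilde\TT\subseteq\Lambda(f)\subseteq(2\N)^n$); their inner terms $+|d^{(\tilde\TT)}|\x^{k\b}$ must be cancelled against the rescaling rather than kept, otherwise the coefficient of $\x^{k\b}$ would come out as $-d_\b+S^-$. A one-line verification of this bookkeeping would make the step airtight, but the argument as designed is sound.
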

\begin{proof}
It comes from the fact that a polynomial $f(\x)$ with exactly one negative term is a nonnegative circuit if and only if $f(\x^k)$ is a nonnegative circuit for any positive integer $k$.
\end{proof}

\begin{theorem}
\label{sec4-thm}
Let $f=\sum_{\a\in\Lambda(f)}c_{\a}\x^{\a}-\sum_{\b\in\Gamma(f)}d_{\b}\x^{\b}\in\R[\x]$ be a PN-polynomial. Let $\CC(\b)$ be as in \eqref{fb}. For every $\b\in\Gamma(f)$ and every $\TT\in\CC(\b)$, let $M_{\TT\b}$ be a $\TT$-rational mediated set containing $\b$. 
Let $M=\cup_{\b\in\Gamma(f)}\cup_{\TT\in\CC(\b)}M_{\TT\b}$ and $\tilde{\A}=\{\a\in\Lambda(f)\mid\a\notin\cup_{\b\in\Gamma(f)}\cup_{\TT\in\CC(\b)}\TT\}$. For each $\bu\in M\backslash\Lambda(f)$, let $\bu=\frac{1}{2}(\bv+\bw)$. 
Then $f\in\SONC$ if and only if $f$ can be written as $f=\sum_{\bu\in M\backslash\Lambda(f)}(a_{\bu}\x^{\frac{1}{2}\bv}-b_{\bu}\x^{\frac{1}{2}\bw})^2+\sum_{\b\in\Gamma(f)}c'_{\a_{\b}}\x^{\a_{\b}}+\sum_{\a\in \tilde{\A}}c_{\a}\x^{\a}$, $a_{\bu},b_{\bu}\in\R,c'_{\a_{\b}}\ge0,\a_{\b}\in\CC(\b),\b\in\Gamma(f)$.
\end{theorem}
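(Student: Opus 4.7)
\medskip
\noindent\textbf{Proof plan.} The plan is to mimic the proof of Theorem~\ref{sec3-thm5}, but to replace every appeal to Lemma~\ref{sec3-lm6} (which forces the substitution $\x\mapsto\x^r$ to use an \emph{odd} exponent $r$) by Lemma~\ref{sec4-lm} (which, for PN-polynomials, allows \emph{any} positive integer exponent $k$). The extra flexibility afforded by the PN hypothesis is precisely what lets us drop the parity requirements on $M_{\TT\b}$ imposed by Lemma~\ref{sec4-lm5}; this is the only substantive difference between the two statements and is the heart of the proof.

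\medskip
\noindent\textbf{Forward direction.} Assume $f\in\SONC$. By (\ref{sec2-eq1}), write $f=\sum_{\b\in\Gamma(f)}\sum_{\TT\in\CC(\b)}f_{\TT\b}+\sum_{\a\in\tilde{\A}}c_{\a}\x^{\a}$, where each $f_{\TT\b}$ is a nonnegative circuit supported on $\TT\cup\{\b\}$. Each such $f_{\TT\b}$ is itself a PN-polynomial, since it has the unique negative term $-d_{\TT\b}\x^{\b}$. For every pair $(\TT,\b)$, pick a positive integer $k_{\TT\b}$ (e.g.\ twice the least common multiple of denominators of coordinates of points in $M_{\TT\b}$) such that $k_{\TT\b} M_{\TT\b}\subseteq(2\N)^n$. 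Then $k_{\TT\b} M_{\TT\b}$ is a genuine $(k_{\TT\b}\TT)$-mediated set containing the lattice point $k_{\TT\b}\b$, so Theorem~\ref{sec3-thm1} applies to $f_{\TT\b}(\x^{k_{\TT\b}})$ and yields a sum-of-binomial-squares decomposition with integer exponents drawn from $\frac{1}{2}(k_{\TT\b} M_{\TT\b})$. Pulling back by $\x\mapsto\x^{1/k_{\TT\b}}$ (purely formally on the exponents) produces a decomposition $f_{\TT\b}=\sum_{\bu\in M_{\TT\b}\setminus\TT}(a_{\bu}\x^{\frac{1}{2}\bv_{\bu}}-b_{\bu}\x^{\frac{1}{2}\bw_{\bu}})^2$ with half-rational exponents from $M_{\TT\b}$. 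Summing these identities over $\b$ and $\TT$ and appending the remaining terms $\sum_{\a\in\tilde{\A}}c_{\a}\x^{\a}$ gives the desired decomposition of $f$.

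\medskip
\noindent\textbf{Backward direction and obstacle.} Conversely, suppose $f$ admits such a decomposition. Choose a positive integer $k$ such that $kM\subseteq(2\N)^n$; this is possible for any rational $M$. Substituting $\x\mapsto\x^k$ yields $f(\x^k)=\sum_{\bu\in M\setminus\Lambda(f)}(a_{\bu}\x^{\frac{k}{2}\bv_{\bu}}-b_{\bu}\x^{\frac{k}{2}\bw_{\bu}})^2+\sum_{\a\in\tilde{\A}}c_{\a}\x^{k\a}$, and each binomial square is now a genuine polynomial whose three exponents $k\bv_{\bu},k\bw_{\bu}\in(2\N)^n$ and $k\bu_{\bu}=\tfrac{k}{2}(\bv_{\bu}+\bw_{\bu})$ form a (degenerate) circuit — so it is either a nonnegative circuit (AM-GM case, $a_{\bu}b_{\bu}>0$) or a sum of two monomial squares (case $a_{\bu}b_{\bu}\le 0$), both of which lie in $\SONC$. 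The extra terms $c_{\a}\x^{k\a}$ are monomial squares. Therefore $f(\x^k)\in\SONC$, and Lemma~\ref{sec4-lm} (applied to the PN-polynomial $f$) gives $f\in\SONC$. The main subtlety is ensuring that the integer $k$ can be chosen to make all scaled exponents \emph{even} simultaneously, which is trivial here since the PN property removes the odd-parity constraint that was necessary in Theorem~\ref{sec3-thm5}; this is precisely why the hypothesis on $M_{\TT\b}$ can be weakened from Lemma~\ref{sec4-lm5} to an arbitrary $\TT$-rational mediated set.
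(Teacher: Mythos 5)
Your proposal is correct and follows exactly the route the paper intends: the paper's own proof is the one-line remark that the result ``follows easily from Lemma~\ref{sec4-lm} and Theorem~\ref{sec3-thm1}'', and your argument is precisely that reduction (scaling by an arbitrary positive integer $k$ to make all exponents even, applying Theorem~\ref{sec3-thm1}, and invoking Lemma~\ref{sec4-lm} for the converse), fleshed out along the lines of the proof of Theorem~\ref{sec3-thm5}. You have correctly identified that the PN hypothesis is what permits an even scaling factor and hence removes the odd-denominator requirement of Lemma~\ref{sec4-lm5}.
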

\begin{proof}
It follows easily from Lemma \ref{sec4-lm} and Theorem \ref{sec3-thm1}.
\end{proof}

The significant difference between Theorem \ref{sec3-thm5} and Theorem \ref{sec4-thm} is that to represent a SONC PN-polynomial as a sum of binomial squares, we do not require the denominators of coordinates of points in $\TT$-rational mediated sets to be odd. 
By virtue of this fact, for given trellis $\TT=\{\a_1,\ldots,\a_m\}$ and lattice point $\b\in\Conv(\TT)^{\circ}$, we can then construct a $\TT$-rational mediated set $M_{\TT\b}$ containing $\b$ which is smaller than that the one from Lemma \ref{sec4-lm5}.
\begin{lemma}\label{sec4-lm4}
For a trellis $\TT$ and a lattice point $\b\in\Conv(\TT)^{\circ}$, there is a $\TT$-rational mediated set $M_{\TT\b}$ containing $\b$.
\end{lemma}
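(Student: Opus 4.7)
The plan is to mimic the proof of Lemma \ref{sec4-lm5} while dropping the parity bookkeeping on numerators and denominators, which was needed there only to enable the reduction from rational to integer exponents in Theorem \ref{sec3-thm3}. For PN-polynomials, Lemma \ref{sec4-lm} removes that need: the odd-denominator requirement in Lemma \ref{sec4-lm5} was used to pass from $f(\x)$ to $f(\x^r)$, but for PN-polynomials any positive integer exponent works, so any $\TT$-rational mediated set containing $\b$ will do.

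Concretely, I would write $\b = \sum_{i=1}^m \frac{q_i}{p}\a_i$ with $p = \sum_{i=1}^m q_i$, $p, q_i \in \N\backslash\{0\}$, and $\gcd(p,q_1,\ldots,q_m) = 1$, and then peel off one vertex at a time: set $\b = \frac{q_1}{p}\a_1 + \frac{p-q_1}{p}\b_1$ where $\b_1 = \sum_{i\ge 2}\frac{q_i}{p-q_1}\a_i$ lies in the face spanned by $\a_2,\ldots,\a_m$, and apply the same reduction recursively to $\b_1$ and to any auxiliary rational point it produces. This yields a finite list of auxiliary rational points $\b_0 = \b, \b_1, \ldots, \b_s$ together with, for each $i$, a relation expressing $\b_i$ as an interior point of a line segment whose endpoints lie in $\TT \cup \{\b_j\}_{j \ne i}$.

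For each of these line segments, I would invoke Lemma \ref{sec4-lm2} (using only its first assertion, without the additional parity-preservation clause) to produce a rational mediated set of that segment containing the relevant $\b_i$. Taking the union $M_{\TT\b}$ of all these segment-level mediated sets gives $\TT \subseteq M_{\TT\b}$ and $M_{\TT\b} \setminus \TT \subseteq \widetilde{A}(M_{\TT\b})$, so $M_{\TT\b}$ is a $\TT$-rational mediated set containing $\b$. I expect no serious obstacle: the argument is structurally identical to, but strictly simpler than, the proof of Lemma \ref{sec4-lm5}, since one no longer needs the separate case analysis for when $p$ is odd and all of the $q_i$ are odd. The only point to verify carefully is that, when splicing the segment-level mediated sets, each non-vertex element of $M_{\TT\b}$ is genuinely the midpoint of two other elements of the union, which is automatic from the way the $\b_i$'s are chained.
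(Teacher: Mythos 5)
Your proposal is correct and follows essentially the same route as the paper: write $\b=\sum_i\frac{q_i}{p}\a_i$, peel off one vertex at a time to produce the chained points $\b_0=\b,\b_1,\ldots$, apply Lemma \ref{sec4-lm2} (without the parity clause) to each resulting segment, and take the union. The final verification you flag --- that each $\b_{i+1}$, being an interior point of the next segment's mediated set, is itself an average of two points of the union --- is exactly the point the paper's proof dismisses with ``clearly,'' so your treatment is, if anything, slightly more careful.
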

\begin{proof}
Suppose that $\b=\sum_{i=1}^m\frac{q_i}{p}\a_i$, where $p=\sum_{i=1}^mq_i$, $p,q_i\in\N\backslash\{0\}$, gcd$(p,q_1,\ldots,q_m)=1$. We can write $$\b=\frac{q_1}{p}\a_1+\frac{p-q_1}{p}(\frac{q_2}{p-q_1}\a_2+\cdots+\frac{q_m}{p-q_1}\a_m).$$
Let $\b_{1}=\frac{q_2}{p-q_1}\a_2+\cdots+\frac{q_m}{p-q_1}\a_m$. Then $\b=\frac{q_1}{p}\a_1+\frac{p-q_1}{p}\b_{1}$. Apply the same procedure for $\b_1$, and continue like this. Eventually we obtain a set of points $\{\b_i\}_{i=0}^{m-2}$ (set $\b_0=\b$) such that $\b_i=\lambda_i\a_{i+1}+\mu_i\b_{i+1}$, $i=0,\ldots,m-3$ and $\b_{m-2}=\lambda_{m-2}\a_{m-1}+\mu_{m-2}\a_m$, where $\lambda_i+\mu_i=1,\lambda_i,\mu_i>0$, $i=0,\ldots,m-2$. For $\b_i=\lambda_i\a_{i+1}+\mu_i\b_{i+1}$ (resp. $\b_{m-2}=\lambda_{m-2}\a_{m-1}+\mu_{m-2}\a_m$), let $M_i$ be the $\{\a_{i+1},\b_{i+1}\}$- (resp. $\{\a_{m-1},\a_{m}\}$-) rational mediated set containing $\b_i$ obtained by Lemma \ref{sec4-lm2}, $i=0,\ldots,m-2$. Let $M_{\TT\b}=\cup_{i=0}^{m-2}M_i$. Then clearly $M_{\TT\b}$ is a $\TT$-rational mediated set containing $\b$.
\end{proof}

The procedure based on Lemma \ref{sec4-lm4} to obtain a $\TT$-rational mediated sets is stated in Algorithm \ref{alg:medset}.

\begin{algorithm}
\renewcommand{\algorithmicrequire}{\textbf{Input:}}
\renewcommand{\algorithmicensure}{\textbf{Output:}}
\caption{${\tt MedSet}(\TT,\b)$}\label{alg:medset}
\begin{algorithmic}[1]
\REQUIRE
A trellis $\TT=\{\a_1,\ldots,\a_m\}$ and a lattice point $\b\in\Conv(\TT)^{\circ}$
\ENSURE
A set of triples $\{(\bu_i,\bv_i,\bw_i)\}_i$ with $\bu_i=\frac{1}{2}(\bv_i+\bw_i)$ such that $\TT\cup\{\bu_i\}_i$ is a $\TT$-rational mediated set containing $\b$
\STATE Let $\b=\sum_{i=1}^m\frac{q_i}{p}\a_i$, where $p=\sum_{i=1}^mq_i$, $p,q_i\in\N\backslash\{0\}$, gcd$(p,q_1,\ldots,q_m)=1$;
\STATE $k:=1, \b_0:=\b$;
\WHILE{$k<m-1$}
\STATE $\b_k:=\frac{q_{k+1}}{p-(q_1+\cdots+q_k)}\a_{k+1}+\cdots+\frac{q_{m}}{p-(q_1+\cdots+q_k)}\a_{m}$;
\STATE $M_{k-1}:={\tt LMedSeq}(\a_{k},\b_{k},\b_{k-1})$;
\ENDWHILE
\STATE $M_{m-2}:={\tt LMedSeq}(\a_{m-1},\a_{m},\b_{m-2})$;
\STATE $M:=\cup_{i=0}^{m-2}M_i$;
\STATE \textbf{return} $M$;
\end{algorithmic}
\end{algorithm}

\begin{example}\label{ex2}
Let $f=x^4y^2+x^2y^4+1-3x^2y^2$ be the Motzkin polynomial and $\TT=\{\a_1=(4,2),\a_2=(2,4),\a_3=(0,0)\}$, $\b=(2,2)$. Then $\b=\frac{1}{3}\a_1+\frac{1}{3}\a_2+\frac{1}{3}\a_3=\frac{1}{3}\a_1+\frac{2}{3}(\frac{1}{2}\a_2+\frac{1}{2}\a_3)$. Let $\b_1=\frac{1}{2}\a_2+\frac{1}{2}\a_3$ such that  $\b=\frac{1}{3}\a_1+\frac{2}{3}\b_1$. Let $\b_2=\frac{2}{3}\a_1+\frac{1}{3}\b_1$. Then it is easy to check that $M=\{\a_1,\a_2,\a_3,\b,\b_1,\b_2\}$ is a $\TT$-rational mediated set containing $\b$.
\begin{center}
\begin{tikzpicture}
\draw (0,0)--(2,4);
\draw (0,0)--(4,2);
\draw (2,4)--(4,2);
\draw (1,2)--(4,2);
\fill (0,0) circle (2pt);
\node[above left] (1) at (0,0) {$(0,0)$};
\node[below left] (1) at (0,0) {$\a_3$};
\fill (2,4) circle (2pt);
\node[above right] (2) at (2,4) {$(2,4)$};
\node[above left] (2) at (2,4) {$\a_2$};
\fill (4,2) circle (2pt);
\node[above right] (3) at (4,2) {$(4,2)$};
\node[below right] (3) at (4,2) {$\a_1$};
\fill (2,2) circle (2pt);
\node[above] (4) at (2,2) {$(2,2)$};
\node[below] (4) at (2,2) {$\b$};
\fill (1,2) circle (2pt);
\node[above left] (5) at (1,2) {$(1,2)$};
\node[below left] (5) at (1,2) {$\b_1$};
\fill (3,2) circle (2pt);
\node[above] (6) at (3,2) {$(3,2)$};
\node[below] (6) at (3,2) {$\b_2$};
\end{tikzpicture}
\end{center}
By a simple computation, we have $f=(1-xy^2)^2+2(x^{\frac{1}{2}}y-x^{\frac{3}{2}}y)^2+(xy-x^2y)^2$. Here we represent $f$ as a sum of three binomial squares with rational exponents.
\end{example}

We associate to a polynomial $f=\sum_{\a\in\Lambda(f)}c_{\a}\x^{\a}-\sum_{\b\in\Gamma(f)}d_{\b}\x^{\b}$, the PN-polynomial $\tilde{f}=\sum_{\a\in\Lambda(f)}c_{\a}\x^{\a}-\sum_{\b\in\Gamma(f)}|d_{\b}|\x^{\b}$.
\begin{lemma}\label{sec4-lm1}
Suppose $f=\sum_{\a\in\Lambda(f)}c_{\a}\x^{\a}-\sum_{\b\in\Gamma(f)}d_{\b}\x^{\b}\in\R[\x]$. If $\tilde{f}$ is nonnegative, then $f$ is nonnegative. Moreover, $\tilde{f}\in\SONC$ if and only if $f\in\SONC$.
\end{lemma}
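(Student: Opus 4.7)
The plan is to prove the two assertions of the lemma separately. The first, a nonnegativity implication, will follow from the pointwise domination $f(\x) \geq \tilde{f}(|\x|)$ with $|\x| := (|x_1|, \ldots, |x_n|)$. The second, a SONC equivalence, will follow by a sign-flipping construction applied to the individual circuits in the structural decomposition \eqref{sec2-eq1}, exploiting the fact established in Section~\ref{sec:prelim} that the nonnegativity criterion for a circuit polynomial whose inner exponent $\b$ lies outside $(2\N)^n$ depends only on the absolute value of the coefficient at $\x^\b$.

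For the first part, every $\a \in \Lambda(f)$ satisfies $\x^\a = |\x|^\a$ since $\Lambda(f) \subseteq (2\N)^n$, and every $\b$ satisfies $|\x^\b| = |\x|^\b$. Hence $-d_\b \x^\b \geq -|d_\b| \, |\x|^\b$ for every $\b \in \Gamma(f)$ and every $\x \in \R^n$; summing yields $f(\x) \geq \tilde{f}(|\x|)$. If $\tilde{f} \geq 0$ on $\R^n$, then in particular $\tilde{f}(|\x|) \geq 0$, so $f(\x) \geq 0$ as required.

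For the SONC equivalence, the key preliminary observation is that any $\b \in \Gamma(f) \cap (2\N)^n$ must have $d_\b > 0$ (else $\b$ would lie in $\Lambda(f)$), so at such $\b$ the coefficients of $\x^\b$ in $f$ and $\tilde{f}$ already coincide. The discrepancy is confined to non-even $\b \in \Gamma(f)$, where the coefficient in $f$ is $-d_\b$ and in $\tilde{f}$ is $-|d_\b|$. Assuming $f \in \SONC$, I would apply \eqref{sec2-eq1} to write $f = \sum_{\b,\TT} f_{\TT\b} + \sum_{\a \in \tilde{\A}} c_\a \x^\a$ with $f_{\TT\b} = \sum_{\a\in\TT} c_{\TT\b,\a}\x^\a - d_{\TT\b}\x^\b$ a nonnegative circuit and $\sum_{\TT \in \CC(\b)} d_{\TT\b} = d_\b$, then define $\tilde{f}_{\TT\b} := \sum_{\a\in\TT} c_{\TT\b,\a}\x^\a - \operatorname{sign}(d_\b)\, d_{\TT\b}\,\x^\b$ for non-even $\b$ and $\tilde{f}_{\TT\b} := f_{\TT\b}$ for even $\b$. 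Each $\tilde{f}_{\TT\b}$ remains a nonnegative circuit because $|\operatorname{sign}(d_\b)\, d_{\TT\b}| = |d_{\TT\b}|$, and summation over $\TT \in \CC(\b)$ yields the coefficient $-\operatorname{sign}(d_\b)\, d_\b = -|d_\b|$ at $\x^\b$, matching $\tilde{f}$. The converse direction is completely symmetric thanks to $\operatorname{sign}(d_\b)^2 = 1$. The main difficulty throughout is careful sign bookkeeping across the circuit decomposition; once formalized, both directions of the biconditional follow at once from the $|d|$-based invariance of circuit nonnegativity at non-even inner exponents.
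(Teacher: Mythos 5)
Your proposal is correct and follows essentially the same route as the paper's proof: the pointwise bound $f(\x)\ge\tilde{f}(|\x|)$ for the first assertion, and for the second a sign flip of the inner-term coefficients within the circuit decomposition \eqref{sec2-eq1}, justified by the fact that nonnegativity of a circuit with non-even inner exponent depends only on $|d|$. The only cosmetic differences are that you prove the direction $f\in\SONC\Rightarrow\tilde{f}\in\SONC$ first and phrase the flip via $\operatorname{sign}(d_{\b})$ rather than isolating the set of non-even $\b$ with $d_{\b}<0$ as the paper does.
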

\begin{proof}
For any $\x\in\R^n$, we have
\begin{align*}
f(\x)&=\sum_{\a\in\Lambda(f)}c_{\a}\x^{\a}-\sum_{\b\in\Gamma(f)}d_{\b}\x^{\b}\\
&\ge\sum_{\a\in\Lambda(f)}c_{\a}|\x|^{\a}-\sum_{\b\in\Gamma(f)}|d_{\b}||\x|^{\b} =\tilde{f}(|\x|),
\end{align*}
where $|\x|=(|x_1|,\ldots,|x_n|)$. It follows that the nonnegativity of $\tilde{f}$ implies the nonnegativity of $f$.

For every $\b\in\Gamma(f)$, let $\CC(\b)$ be as in \eqref{fb}. Let $\B=\{\b\in\Gamma(f)\mid\b\notin(2\N)^n\textrm{ and }d_{\b}<0\}$ and  $\tilde{\A}=\{\a\in\Lambda(f)\mid\a\notin\cup_{\b\in\Gamma(f)}\cup_{\TT\in\CC(\b)}\TT\}$. Assume $\tilde{f}\in\SONC$. Then we can write
\begin{align*}
\tilde{f}=&\sum_{\b\in\Gamma(f)\backslash\B}\sum_{\TT\in\CC(\b)}\left(\sum_{\a\in \TT}c_{\TT\b\a}\x^{\a}-d_{\TT\b}\x^{\b}\right)\\
&+\sum_{\b\in\B}\sum_{\TT\in\CC(\b)}\left(\sum_{\a\in \TT}c_{\TT\b\a}\x^{\a}-\tilde{d}_{\TT\b}\x^{\b}\right)+\sum_{\a\in \tilde{\A}}c_{\a}\x^{\a}
\end{align*}
such that each $\sum_{\a\in \TT}c_{\TT\b\a}\x^{\a}-d_{\TT\b}\x^{\b}$ and each $\sum_{\a\in \TT}c_{\TT\b\a}\x^{\a}-\tilde{d}_{\TT\b}\x^{\b}$ are nonnegative circuits. Note that $\sum_{\a\in \TT}c_{\TT\b\a}$ $\x^{\a}+\tilde{d}_{\TT\b}\x^{\b}$ is also a nonnegative circuit and $\sum_{\TT\in\TT(\b)}\tilde{d}_{\TT\b}$ $=|d_{\b}|=-d_{\b}$ for any $\b\in\B$. Hence,
\begin{align*}
f=&\sum_{\b\in\Gamma(f)\backslash\B}\sum_{\TT\in\CC(\b)}\left(\sum_{\a\in \TT}c_{\TT\b\a}\x^{\a}-d_{\TT\b}\x^{\b}\right)\\
&+\sum_{\b\in\B}\sum_{\TT\in\CC(\b)}\left(\sum_{\a\in \TT}c_{\TT\b\a}\x^{\a}+\tilde{d}_{\TT\b}\x^{\b}\right)+\sum_{\a\in \tilde{\A}}c_{\a}\x^{\a}\in\SONC.
\end{align*}

The inverse follows similarly.
\end{proof}

Hence by Lemma \ref{sec4-lm1}, if we replace the polynomial $f$ in (\ref{sec3-eq5}) by its associated PN-polynomial $\tilde{f}$, then this does not affect the optimal value of (\ref{sec3-eq5}):
\begin{equation}\label{sec3-eq6}
(\textrm{SONC-PN}):\quad
\begin{cases}
\sup&\xi\\
\textrm{s.t.}&\tilde{f}(\x)-\xi\in\SONC\,.
\end{cases}
\end{equation}

\begin{remark}
Lemma \ref{sec4-lm1} tells us that the SONC formulation for the polynomial optimization problem \eqref{sec3-eq5} always provides the optimal value w.r.t. the corresponding PN-polynomial. See also Remark \ref{quality}.
\end{remark}

\subsection{Compute a simplex cover}
Given a polynomial $f=\sum_{\a\in\Lambda(f)}c_{\a}\x^{\a}-\sum_{\b\in\Gamma(f)}d_{\b}\x^{\b}\in\R[\x]$, in order to obtain a SONC decomposition of $f$, we use all simplices covering $\b$ for each $\b\in\Gamma(f)$ in Theorem \ref{sec3-thm5}. 
In practice, we do not need that many simplices, as illustrated by the following example.
\begin{example}
Let $f=50x^4y^4+x^4+3y^4+800-100xy^2-100x^2y$. Let $\a_1=(0,0),\a_2=(4,0),\a_3=(0,4),\a_4=(4,4)$ and $\b_1=(2,1),\b_2=(1,2)$. 
There are two simplices which cover $\b_1$: one with vertices $\{\a_1,\a_2\,\a_3\}$, denoted by $\Delta_1$, and one with vertices $\{\a_1,\a_2\,\a_4\}$, denoted by $\Delta_2$. 
There are two simplices which cover $\b_2$: $\Delta_1$ and one with vertices $\{\a_1,\a_3\,\a_4\}$, denoted by $\Delta_3$. One can check that $f$ admits a SONC decomposition $f=g_1+g_2$, where $g_1=20x^4y^4+x^4+400-100x^2y$, supported on $\Delta_2$, and $g_2=30x^4y^4+3y^4+400-100xy^2$, supported on $\Delta_3$, are both nonnegative circuit polynomials. Hence the simplex $\Delta_1$ is not needed in this SONC decomposition of $f$.
\begin{center}
\begin{tikzpicture}
\draw (0,0)--(0,2);
\draw (0,0)--(2,0);
\draw (2,0)--(2,2);
\draw (0,2)--(2,2);
\draw (0,0)--(2,2);
\draw (0,2)--(2,0);
\fill[fill=green,fill opacity=0.3] (0,0)--(2,0)--(2,2)--(0,0);
\fill[fill=blue,fill opacity=0.3] (0,0)--(0,2)--(2,2)--(0,0);
\fill (0,0) circle (2pt);
\node[below left] (1) at (0,0) {$\a_1$};
\fill (2,0) circle (2pt);
\node[below right] (2) at (2,0) {$\a_2$};
\fill (0,2) circle (2pt);
\node[above left] (3) at (0,2) {$\a_3$};
\fill (2,2) circle (2pt);
\node[above right] (4) at (2,2) {$\a_4$};
\fill (1,0.5) circle (2pt);
\node[right] (5) at (1,0.5) {$\b_1$};
\fill (0.5,1) circle (2pt);
\node[above] (6) at (0.5,1) {$\b_2$};
\end{tikzpicture}
\end{center}
\end{example}
A recent study by \cite{papp} proposes a systematic method to compute an optimal simplex cover. It would be worth trying to combine this framework with our SOC characterization for SONC cones to achieve a more accurate algorithm. Here we rely on a heuristics to compute a set of simplices with vertices coming from $\Lambda(f)$ and that covers $\Gamma(f)$.  
For $\b\in\Gamma(f)$ and $\a_0\in\Lambda(f)$, define an auxiliary linear program:
\begin{align*}
\textrm{SimSel}(\b,\Lambda(f),\a_0):=\,&\arg\max\quad\lambda_{\a_0}\\
&\,\textrm{s.t. }\left\{\sum_{\a\in\Lambda(f)}\lambda_{\a}\cdot\a=\b,\sum_{\a\in\Lambda(f)}\lambda_{\a}=1,\lambda_{\a}\ge0, \forall\a\in\Lambda(f)\right\}.
\end{align*}

Following \cite{se}, we can ensure that the output of $\textrm{SimSel}(\b,\Lambda(f),\a_0)$\footnote{If $\b$ lies on a face of $\New(f)$, then we assume $\a_0$ lies on the same face.} corresponds to a trellis which contains $\a_0$ and covers $\b$. 
%

\begin{algorithm}
\renewcommand{\algorithmicrequire}{\textbf{Input:}}
\renewcommand{\algorithmicensure}{\textbf{Output:}}
\caption{{\tt SimplexCover($\Lambda(f), \Gamma(f)$)}}\label{alg:simplexcover}
\begin{algorithmic}[1]
\REQUIRE
$\Lambda(f), \Gamma(f)$
\ENSURE
$\{(\TT_k,\b_k)\}_k$: a set of circuits such that $\cup_{k}\TT_k=\Lambda(f)$ and $\{\b_k\}_k=\Gamma(f)$
\STATE $U:=\Lambda(f), V:=\Gamma(f)$, $k:=0$;
\WHILE{$U\ne\emptyset$ and $V\ne\emptyset$}
\STATE $k:=k+1$;
\STATE Choose $\a_0\in U$ and $\b_k\in V$;
\STATE $\boldsymbol{\lambda}:=\textrm{SimSel}(\b_k,\Lambda(f),\a_0)$;
\STATE $\TT_k:=\{\a\in\Lambda(f)\mid\lambda_{\a}>0\}$;
\STATE $U:=U\backslash\TT_k$, $V:=V\backslash\{\b_k\}$;
\ENDWHILE
\IF{$V\ne\emptyset$}
\WHILE{$V\ne\emptyset$}
\IF{$U=\emptyset$}
\STATE $U:=\Lambda(f)$;
\ENDIF
\STATE $k:=k+1$;
\STATE Choose $\TT_0\in U$ and $\b_k\in V$;
\STATE $\boldsymbol{\lambda}:=\textrm{SimSel}(\b_k,\Lambda(f),\a_0)$;
\STATE $\TT_k:=\{\a\in\Lambda(f)\mid\lambda_{\a}>0\}$;
\STATE $U:=U\backslash\TT_k$, $V:=V\backslash\{\b_k\}$;
\ENDWHILE
\ELSE
\WHILE{$U\ne\emptyset$}
\IF{$V=\emptyset$}
\STATE $V:=\Gamma(f)$;
\ENDIF
\STATE $k:=k+1$;
\STATE Choose $\a_0\in U$ and $\b_k\in V$;
\STATE $\boldsymbol{\lambda}:=\textrm{SimSel}(\b_k,\Lambda(f),\a_0)$;
\STATE $\TT_k:=\{\a\in\Lambda(f)\mid\lambda_{\a}>0\}$;
\STATE $U:=U\backslash\TT_k$, $V:=V\backslash\{\b_k\}$;
\ENDWHILE
\ENDIF
\STATE \textbf{return} $\{(\TT_k,\b_k)\}_k$
\end{algorithmic}
\end{algorithm}

Suppose $\tilde{f}=\sum_{\a\in\Lambda(f)}c_{\a}\x^{\a}-\sum_{\b\in\Gamma(f)}d_{\b}\x^{\b}\in\R[\x]$ and assume that $\tilde{\A}=\{\a\in\Lambda(f)\mid\a\notin\cup_{\b\in\Gamma(f)}\cup_{\TT\in\CC(\b)}\TT\}=\varnothing$. The procedure {\tt SimplexCover} stated in Algorithm \ref{alg:simplexcover} allows one to compute a simplex cover $\{(\TT_k,\b_k)\}_{k=1}^l$ for $\tilde{f}$. 
For each $k$, let $M_{k}$ be a $\TT_{k}$-rational mediated set containing $\b_k$ and $s_k=\#M_k\backslash\TT_{k}$. 
For each $\bu_i^k\in M_k\backslash\TT_{k}$, let us write  $\bu_i^k=\frac{1}{2}(\bv_i^k+\bw_i^k)$. Let $\mathbf{K}$ be the $3$-dimensional rotated SOC, i.e.,
\begin{equation}
\mathbf{K}:=\{(a,b,c)\in\R^3\mid2ab\ge c^2,a\ge0,b\ge0\}.
\end{equation}

Then we can relax (SONC-PN) to a SOCP problem as follows:{\small
\begin{equation}\label{eq:soncsocp}
(\textrm{SONC-SOCP}):\quad\begin{cases}
\sup&\xi\\
\textrm{s.t.}&\tilde{f}(\x)-\xi=\sum_{k=1}^l\sum_{i=1}^{s_k}(2a_i^k\x^{\bv_i^k}+b_i^k\x^{\bw_i^k}-2c_i^k\x^{\bu_i^k}),\\
&(a_i^k,b_i^k,c_i^k)\in\mathbf{K},\quad\forall i,k.
\end{cases}
\end{equation}}
Let us denote by $\xi_{socp}$ the optimal value of (\ref{eq:soncsocp}). Then, we have $\xi_{socp}\le\xi_{sonc}\le\xi^*$.

\begin{remark}
\label{quality}
The quality of obtained SONC lower bounds depends on two successive steps: the relaxation to the corresponding PN-polynomial (from $\xi^*$ to $\xi_{sonc}$) and the relaxation to a specific simplex cover (from $\xi_{sonc}$ to $\xi_{socp}$). 
The loss of bound-quality at the second step can be improved by computing a more optimal simplex cover. Nevertheless, it may happen that the loss of bound-quality at the first step is already big, as shown in Example \ref{ex:quality}, which indicates that the gap between nonnegative polynomials and SONC PN-polynomials (see figure \ref{fig-poly}) may greatly affect the quality of SONC lower bounds.
\end{remark}

\begin{example}
\label{ex:quality}
Let $f=1+x_1^4+x_2^4-x_1x_2^2-x_1^2x_2+5x_1x_2$. Since $\Lambda(f)$ forms a trellis, the simplex cover for $f$ is unique. One obtains  $\xi_{socp}=\xi_{sonc}\approx-6.916501$ while $\xi^*\approx-2.203372$. 
Hence the relative optimality gap is near $214\%$.
\end{example}

\begin{figure}[htbp]
\begin{center}
\begin{tikzpicture} 
\node at (-3.2,0) {{\footnotesize PSD polynomials}};
\node at (3.2,0) {{\footnotesize PN-polynomials}};
\node at (-0.6,0) {{\footnotesize SONC polynomials}};
\draw[color=blue] (-1.7,0) ellipse (3 and 2);
\draw[color=red] (1.7,0) ellipse (3 and 2);
\draw[color=green] (-0.6,0) circle (1.4);
\end{tikzpicture}
\end{center}
\caption{Relationship of different classes of polynomials}\label{fig-poly}
\end{figure}
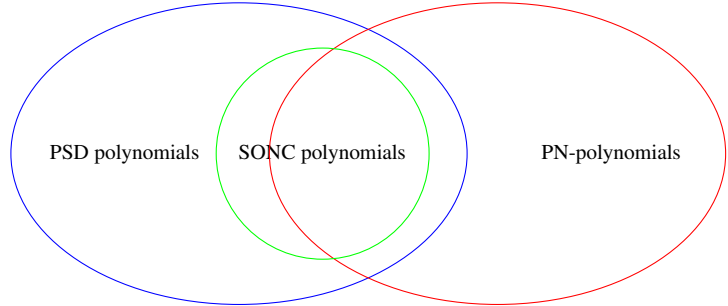

\section{Exact SOBS decompositions of SONC polynomials}
\label{sec:exact}

\subsection{The procedure ${\tt ExactSOBS}$}

Here, we present a procedure, called ${\tt ExactSOBS}$, to compute exact rational SOBS decompositions (with rational exponents) of PN-polynomials lying in the interior of the SONC cone (if $f$ itself is not a PN-polynomial, then by Lemma \ref{sec4-lm1}, an exact rational SOBS decompositions of its associated PN-polynomial $\tilde{f}$ also provides a nonnegativity certificate for $f$).
The procedure, stated in Algorithm \ref{alg:roundproject}, is a rounding-projection procedure, in the spirit of the one by \cite{pe} to obtain exact SOS decompositions for polynomials in the interior of the SOS cone and by \cite{ma19} to obtain exact SAGE decompositions of polynomials in the interior of the SAGE cone. 

\begin{algorithm}
\renewcommand{\algorithmicrequire}{\textbf{Input:}}
\renewcommand{\algorithmicensure}{\textbf{Output:}}
\newcommand{\roundfun}[2]{\texttt{round}(#1,#2)}
\caption{${\tt ExactSOBS}(f, \hat\delta, \tilde\delta)$}\label{alg:roundproject}
\begin{algorithmic}[1]
\REQUIRE
A SONC PN-polynomial $f=\sum_{\a\in\A}f_{\a}\x^{\a}\in\Q[\x]$ with $\tilde{\A}=\varnothing$, a precision parameter $\tilde \delta$ for the SOCP solver, a rounding precision $\hat \delta$
\ENSURE
An exact SOBS decomposition of $f$
\STATE Determine the sets $\Lambda(f),\Gamma(f)$ such that $f=\sum_{\a\in\Lambda(f)}c_{\a}\x^{\a}-\sum_{\b\in\Gamma(f)}d_{\b}\x^{\b}$;
\STATE $\{(\TT_k,\b_k)\}_{k=1}^l:={\tt SimplexCover}(\Lambda(f),\Gamma(f))$; \label{line:simplexcover}
\FOR {$k=1:l$}
\STATE $\{(\bu_i^k,\bv_i^k,\bw_i^k)\}_{i=1}^{s_k}:={\tt MedSet}(\TT_k,\b_k)$; \label{line:medset}
\ENDFOR
\STATE Obtain $\{(\tilde a_i^k,\tilde b_i^k,\tilde c_i^k)\}_{i,k}$ by solving (\textrm{SONC-SOCP}) with $\xi=0$ at precision $\tilde{\delta}$; \label{line:socp}
\STATE $\{(\hat{a}_i^k,\hat{b}_i^k,\hat{c}_i^k)\}_{i,k}:= \roundfun{\{(\tilde a_i^k, \tilde b_i^k, \tilde c_i^k)\}_{i,k}}{\tilde \delta}$; \COMMENT{rounding step} \label{line:round}
\STATE $B:=\cup_{k=1}^l\cup_{i=1}^{s_k}\{\bu_i^k,\bv_i^k,\bw_i^k\}$;
\FOR{$\gg\in B$}
\STATE $\eta(\gg):=\#\{(i,k)\mid\gg \in \{ \bu_i^k,\bv_i^k,\bw_i^k \} ,i=1,\ldots,s_k,k=1,\ldots,l\}$;
\ENDFOR
\FOR {$k=1:l,i=1:s_k$}
\STATE ${a}_i^k:=\hat{a}_i^k-\frac{1}{2\eta(\bv_i^k)}\Bigl( \sum_{\bu_p^q=\bv_i^k}2\hat{a}_p^q+\sum_{\bv_p^q=\bv_i^k}\hat{b}_p^q-\sum_{\bw_p^q=\bv_i^k}2\hat{c}_p^q-f_{\bv_i^k}\Bigr)$; \COMMENT{projection step} \label{line:proj}
\STATE ${b}_i^k:=\hat{b}_i^k-\frac{1}{\eta(\bw_i^k)}\Bigl( \sum_{\bu_p^q=\bw_i^k}2\hat{a}_p^q+\sum_{\bv_p^q=\bw_i^k}\hat{b}_p^q-\sum_{\bw_p^q=\bw_i^k}2\hat{c}_p^q-f_{\bw_i^k}\Bigr)$; \COMMENT{projection step}
\STATE ${c}_i^k:=\hat{c}_i^k+\frac{1}{2\eta(\bu_i^k)}\Bigl( \sum_{\bu_p^q=\bu_i^k}2\hat{a}_p^q+\sum_{\bv_p^q=\bu_i^k}\hat{b}_p^q-\sum_{\bw_p^q=\bu_i^k}2\hat{c}_p^q-f_{\bu_i^k}\Bigr)$; \COMMENT{projection step} \label{line:projend}
\ENDFOR
\FOR {$k=1:l$}
\STATE $f_k:=\sum_{i=1}^{s_k} (2{a}_i^k\x^{\bv_i^k}+{b}_i^k\x^{\bw_i^k}-2{c}_i^k\x^{\bu_i^k})$;
\ENDFOR
\STATE\textbf{return} $\sum_{k=1}^lf_k$;
\end{algorithmic}
\end{algorithm}

After computing a simplex cover in Line \ref{line:simplexcover} with Algorithm \ref{alg:simplexcover} and the corresponding rational mediated sets in Line \ref{line:medset} with Algorithm \ref{alg:medset}, the procedure calls a SOCP solver in Line \ref{line:socp} to compute a $\tilde{\delta}$-approximation $(\tilde{a}_i^k, \tilde{b}_i^k,  \tilde{c}_i^k)_{i,k}$ of the coefficients involved in the SOBS decomposition of $f$ from \eqref{eq:soncsocp}.
This approximation is then rounded to rational points in Line \ref{line:round} with a prescribed accuracy of $\hat{\delta}$.
The three projection steps from Line \ref{line:proj} to Line \ref{line:projend} ensure that the rational points $({a}_i^k, {b}_i^k, {c}_i^k)_{i,k}$ satisfy exactly the equality constraints from \eqref{eq:soncsocp}, namely that for each $k=1,\dots,\ell$ and each $i=1,\dots,s_k$, one has 
\begin{align}
\label{eq:eqsocp}
f_{\gg} = \left( \sum_{ \underset{ \bv_p^q=\gg}{p,q}}2a_p^q+\sum_{\underset{ \bw_p^q=\gg}{p,q}}b_p^q-\sum_{\underset{ \bu_p^q=\gg}{p,q}}2c_p^q\right) \,,
\end{align}
for each $\gg \in \{\bu_i^k,\bv_i^k,\bw_i^k\}_{i,k}$.
As proved later on, if both numerical SOCP solver accuracy $\tilde \delta$ and rounding accuracy $\hat{\delta}$ are high enough, then each triple $({a}_i^k, {b}_i^k, {c}_i^k)$ belongs to the second-order cone $\mathbf{K} = \{(a,b,c): a,b \geq 0 \,, 2 a b \geq c^2 \}$.
\subsection{Arithmetic complexity}

The {\textit{bit size}} of $i \in \Z$ is denoted by ${\tau(i)} := \lfloor \log_2 (|i|) \rfloor + 1$ with ${\tau(0)} := 1$.  
Given $i \in \Z$ and $j \in \Z \backslash \{0\}$ with gcd$(i,j) = 1$, we define ${\tau(i/j)} := \max \{\tau(i), \tau(j)\}$. 
For two mappings $g,h : \N^m \to \R$, we use the notation {$g(v) \in \bigo{(h(v))}$} to state the existence of $i \in \N$ such that $g(v) \leq i h(v)$, for all $v \in \N^m$.
The expression ``$g(v) \in \bigotilde{(h(v))}$'' means that there exists $c \in \N$ such that $g(v) \leq  h(v) \log_2 (h(v))^c$, for all $v \in \N^m$.

\begin{lemma}\label{bound-denom}
Let $f\in\R[\x]$ be a polynomial of degree $d$ and let $D$ be the maximal value of the denominators involved in the triples $\{(\bu_i^k,\bv_i^k,\bw_i^k)\}_{i,k}$ in \eqref{eq:soncsocp}. Then $D\le (1+nd^2)^{n+1}$.
\end{lemma}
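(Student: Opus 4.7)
The plan is to trace how denominators propagate through Algorithm \ref{alg:medset}, the only source of non-integer coordinates among the triples $(\bu_i^k,\bv_i^k,\bw_i^k)$ appearing in \eqref{eq:soncsocp}. Each such triple is produced by one call ${\tt MedSet}(\TT_k,\b_k)$, where $\TT_k \subseteq \Lambda(f) \subseteq \supp(f)$ is a trellis (so its vertices are lattice points with coordinates bounded by $d$) and $\b_k \in \Conv(\TT_k)^\circ \cap \N^n$. It therefore suffices to fix one such call ${\tt MedSet}(\TT,\b)$ and bound the denominators that arise in the resulting rational mediated set.

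Write $\TT = \{\a_1,\dots,\a_m\}$ with $m \le n+1$, and expand $\b = \sum_{i=1}^m (q_i/p)\a_i$ in reduced form with $p = \sum_i q_i$. Algorithm \ref{alg:medset} introduces intermediate points $\b_j = \sum_{i>j}(q_i/p_j)\a_i$ with $p_j := p - q_1 - \cdots - q_j \le p$, and then invokes ${\tt LMedSet}(\a_{j+1},\b_{j+1},\b_j)$ for $j = 0,\dots,m-3$ together with ${\tt LMedSet}(\a_{m-1},\a_m,\b_{m-2})$ at the base. A generic output of such a call has the form $(1 - u/p_j)\a_{j+1} + (u/p_j)\b_{j+1}$ for some integer $u$ returned by ${\tt MedSeq}$. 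Since $\a_{j+1}$ is integral and the coordinates of $\b_{j+1}$ have denominator $p_{j+1}$, this point has $\R^n$-coordinates with common denominator dividing $\mathrm{lcm}(p_j,p_{j+1}) \le p_j p_{j+1} \le p^2$. The base case is tighter, yielding denominators at most $p_{m-2} \le p$ because both endpoints are lattice points. Thus every point in the mediated set has denominator at most $p^2$.

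It remains to bound $p$. The weights $\lambda_i = q_i/p$ solve the linear system
\[
\begin{pmatrix} \a_1 & \cdots & \a_m \\ 1 & \cdots & 1 \end{pmatrix} \boldsymbol{\lambda} = \begin{pmatrix} \b \\ 1 \end{pmatrix}.
\]
Since $\TT$ is a trellis, its vertices are affinely independent, so one can extract $m$ linearly independent rows---including the all-ones row---to form a nonsingular $m \times m$ submatrix $M$. By Cramer's rule, $p$ divides $|\det M|$. Each column of $M$ has $m-1 \le n$ entries of magnitude at most $d$ and one entry equal to $1$, hence Euclidean norm at most $\sqrt{1+nd^2}$. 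Hadamard's inequality then yields $|\det M| \le (1+nd^2)^{m/2} \le (1+nd^2)^{(n+1)/2}$. Combining with the previous paragraph, $D \le p^2 \le (1+nd^2)^{n+1}$, as claimed.

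The main difficulty is the denominator bookkeeping in the middle paragraph: one must verify that denominators compound at most once, yielding the factor $p_j p_{j+1}$, rather than multiplying along the chain $\b_0,\b_1,\dots,\b_{m-2}$ to produce $\prod_j p_j$. This relies on each call ${\tt LMedSet}(\a_{j+1},\b_{j+1},\b_j)$ reading $\b_{j+1}$ directly from the original barycentric data (with denominator $p_{j+1}$), not from a point previously constructed within the recursion. Once this is established, the Hadamard bound on $p$ is routine.
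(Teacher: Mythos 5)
Your proposal is correct and follows essentially the same route as the paper's proof: reduce to bounding the denominators produced by one call to ${\tt MedSet}$, observe that they compound at most once (giving $D\le p^2$ rather than a product over the whole chain), and then bound $p$ by $|\det|$ via Cramer's rule and Hadamard's inequality applied to the barycentric linear system. One small slip: the common denominator of $(1-u/p_j)\a_{j+1}+(u/p_j)\b_{j+1}$ divides the product $p_jp_{j+1}$, not $\mathrm{lcm}(p_j,p_{j+1})$, but since you immediately pass to $p_jp_{j+1}\le p^2$ the final bound is unaffected.
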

\begin{proof}
Consider any circuit $(\TT,\b)$ such that $\TT\subseteq\Lambda(f)$ and $\b\in\Gamma(f)$. Without loss of generality, assume that $\TT=\{\a_1,\ldots,\a_{n+1}\}$. We write $\b=\sum_{i=1}^{n+1}\frac{q_i}{p}\a_i$, where $p=\sum_{i=1}^{n+1}q_i$, $p,q_i\in\N\backslash\{0\}$, gcd$(p,q_1,\ldots,q_{n+1})=1$.

First, one can easily see that the denominators involved in the output $\{(\bu_i,\bv_i,\bw_i)\}_i$ of the algorithm ${\tt LMedSet}(\a',\a'',\b')$ do not exceed the common denominator of $\a',\a'',\b'$. Thus it suffices to consider the denominators of $\b_i$'s appearing in the algorithm ${\tt MedSet}$, which are clearly bounded from above by $p$. Note that $\bm\lambda=(\frac{q_1}{p},\ldots,\frac{q_{n+1}}{p})$ is the unique solution to the system of linear equations:
\begin{equation*}
    \begin{bmatrix}
    1&\cdots&1\\ \a_1&\cdots&\a_{n+1}
    \end{bmatrix}\bm\lambda=\begin{bmatrix}
    1\\ \b
    \end{bmatrix}.
\end{equation*}
Hence $p\le\det(\begin{bmatrix}
    1&\cdots&1\\ \a_1&\cdots&\a_{n+1}
    \end{bmatrix})\le(1+nd^2)^{\frac{n+1}{2}}$. 
The latter inequality follows from Hadamard's inequality. As a result, $D\le p^2\le (1+nd^2)^{n+1}$.
\end{proof}

For later purpose, we recall the result which bounds the roots of univariate polynomials with integer coefficients:
\begin{lemma}{~\cite[Theorem~4.2 (ii)]{Mignotte1992}}
	\label{lemma:mignotte}
	Let $f \in \Z[x]$ of degree $d$, with coefficient bit size bounded from above by $\tau$. 
	If $f(\tilde x) = 0$ and $\tilde x \neq 0$, then $\frac{1}{2^\tau + 1} \leq |\tilde x| \leq 2^\tau + 1$.
\end{lemma}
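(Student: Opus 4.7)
The plan is to establish this classical Mignotte bound by two applications of Cauchy's root bound, one for the upper bound and one (via the reciprocal polynomial) for the lower bound. Throughout, I would write $f(x) = \sum_{i=0}^d a_i x^i$ with $a_d \neq 0$ and use the hypothesis $\tau(a_i) \leq \tau$ to deduce the uniform bound $|a_i| \leq 2^\tau - 1$ directly from the definition of bit size.

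For the upper bound, if $|\tilde x| \leq 1$ the estimate $|\tilde x| \leq 2^\tau + 1$ is trivial, so I would assume $|\tilde x| > 1$. Isolating the leading term in $f(\tilde x) = 0$ gives
\[
|a_d|\,|\tilde x|^d \;\leq\; \sum_{i=0}^{d-1} |a_i|\,|\tilde x|^i \;\leq\; M \,\frac{|\tilde x|^d - 1}{|\tilde x| - 1},
\]
where $M := \max_{0 \leq i < d} |a_i|$. Dividing by $|\tilde x|^d$ and using $|a_d| \geq 1$ yields $|\tilde x| - 1 \leq M \leq 2^\tau - 1$, which gives $|\tilde x| \leq 2^\tau < 2^\tau + 1$.

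For the lower bound, I would pass to the reciprocal polynomial $g(x) := x^d f(1/x) = \sum_{i=0}^d a_{d-i} x^i$. Since $\tilde x \neq 0$, the value $1/\tilde x$ is a root of $g$, and $g$ has the same coefficients as $f$ up to reversal, so the bit-size bound $\tau$ still applies. Applying the upper bound argument to $g$ then delivers $|1/\tilde x| \leq 2^\tau + 1$, hence $|\tilde x| \geq 1/(2^\tau + 1)$.

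The only mild wrinkle is that the reciprocal argument requires the leading coefficient of $g$, namely $a_0 = f(0)$, to be nonzero. If $f(0) = 0$, I would first factor $f(x) = x^m h(x)$ with $h(0) \neq 0$; the nonzero roots of $f$ and $h$ coincide, and the coefficients of $h$ form a subset of those of $f$, so the bit-size hypothesis is preserved. Applying the reciprocal argument to $h$ then yields the same lower bound for $|\tilde x|$. This is the only subtlety; the rest of the proof is a textbook application of Cauchy's inequality and presents no real obstacle.
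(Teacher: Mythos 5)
Your proof is correct. Note that the paper does not prove this statement at all: it is stated as a quoted result with a citation to Mignotte's book (and in fact the lemma sits inside a commented-out block of the source), so there is no in-paper argument to compare against. Your Cauchy-bound derivation is the standard textbook proof of exactly that cited result, and all the steps check out: the bit-size definition $\tau(i)=\lfloor\log_2|i|\rfloor+1$ does give $|a_i|\le 2^\tau-1$; the geometric-series estimate with $|a_d|\ge 1$ gives $|\tilde x|\le 2^\tau$ when $|\tilde x|>1$ (and the bound is trivial otherwise); and the reciprocal-polynomial trick, together with your factoring out of $x^m$ when $f(0)=0$, correctly transfers the upper bound into the lower bound $|\tilde x|\ge 2^{-\tau}\ge 1/(2^\tau+1)$. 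The only degenerate case not explicitly mentioned, $\max_{i<d}|a_i|=0$, is vacuous since then $f=a_dx^d$ has no nonzero roots. Your argument actually yields the slightly sharper bounds $2^{-\tau}\le|\tilde x|\le 2^\tau$, which of course imply the stated ones.
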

\begin{lemma}
\label{lemma:sonc_distance}
Let $f=\sum_{\a\in\A}f_{\a}\x^{\a}\in\Q[\x]$ be a SONC PN-polynomial of degree $d \geq 2$ with $\tau = \tau (f)$. 
Assume that $\tilde{\A}=\varnothing$ and $f$ lies in the interior of the SONC cone.
Let $D$ be the maximal value of the denominators involved in the triples $\{(\bu_i^k,\bv_i^k,\bw_i^k)\}_{i,k}$ in \eqref{eq:soncsocp}, bounded as in Lemma \ref{bound-denom}.
Then, there exists $N \in \N$ such that for $\varepsilon := 2^{-N}$, $f - \varepsilon \sum_{k=1}^l \sum_{i=1}^{s_k} (\x^{\bu_i^k} + \x^{\bv_i^k} + \x^{\bw_i^k})$ has an SOBS decomposition (with rational exponents), with $N \leq \tau (\varepsilon) \in \bigo{(\tau \cdot (4d D+6)^{3n+3})}$, thus $N \leq \tau (\varepsilon) \in \bigo{(\tau \cdot (5d )^{3n+3} (1+nd^2)^{3 (n+1)^2} )}$.
\end{lemma}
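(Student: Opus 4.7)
The plan is to reduce to standard polynomials via the substitution $\x \mapsto \y^{2D}$, apply the definition of $\tau_{\sonc}$ at the level of integer-exponent polynomials, and then revert to rational exponents. Set $g(\y) := f(\y^{2D})$ and $h'(\x) := \sum_{k,i}(\x^{\bu_i^k}+\x^{\bv_i^k}+\x^{\bw_i^k})$, so that $h(\y) := h'(\y^{2D})$ has exponents $\{2D\gg : \gg \in \{\bu_i^k,\bv_i^k,\bw_i^k\}_{i,k}\}$. Since $D$ is (a multiple of) the denominators appearing in these rational vectors, every such exponent vector lies in $(2\N)^n$; hence $h$ is a sum of monomial squares, and in particular $h \in \SONC$.

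First I would transfer the interior property from $f$ to $g$ via Lemma \ref{sec4-lm}: since $f$ is a PN-polynomial, the map $p(\x) \mapsto p(\y^{2D})$ is a coefficient-preserving relabeling that identifies the SONC cones supported on $\supp(f)$ and on $2D \cdot \supp(f)$. Consequently $g$ lies in the interior of $\SONC$ and $\tau_{\sonc}(g)$ is well-defined. By the very definition of $\tau_{\sonc}(g)$ as an upper bound on the bit size of the distance from $g$ to the boundary of $\SONC$, choosing $\varepsilon := 2^{-N}$ with $N := \tau_{\sonc}(g)$ guarantees $g - \varepsilon h \in \SONC$. Moreover, $g - \varepsilon h$ remains a PN-polynomial, because $h$ contributes only positive coefficients at exponents in $(2\N)^n$, so subtracting $\varepsilon h$ can merely decrease positive even-exponent coefficients or create new negative terms, without ever creating a positive coefficient at a non-even exponent.

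Next I would apply Theorem \ref{sec4-thm} (or equivalently Theorem \ref{sec3-thm5}) to $g - \varepsilon h$: any SONC PN-polynomial admits an SOBS decomposition associated with any chosen collection of rational mediated sets for its negative support. Writing such a decomposition as $g - \varepsilon h = \sum_j (\alpha_j \y^{\bv'_j/2} - \beta_j \y^{\bw'_j/2})^2$ for suitable rational vectors $\bv'_j,\bw'_j$ and reverting via $\y \mapsto \x^{1/(2D)}$ produces the required SOBS decomposition of $f - \varepsilon h'$ with rational exponents.

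The main obstacle is making the transfer across the substitution $\x \mapsto \y^{2D}$ fully rigorous: one has to check that the notion of interior of $\SONC$ and the bit-size bound $\tau_{\sonc}(g)$ are preserved (or at least quantitatively controlled) by the relabeling of exponents, so that the threshold $\varepsilon = 2^{-N}$ indeed certifies SONC membership for both $g - \varepsilon h$ and the rational-exponent expression $f - \varepsilon h'$. This reduces to showing that the linear map on coefficient vectors induced by $\x \mapsto \y^{2D}$ is an isomorphism between the SONC cones on $\supp(f)$ and on $2D \cdot \supp(f)$, which is immediate once one identifies these supports term-by-term.
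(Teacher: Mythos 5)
Your proposal follows essentially the same route as the paper's proof: pass to $g(\x)=f(\x^{2D})$, use that $g$ inherits membership in the interior of the SONC cone to extract $\varepsilon = 2^{-\tau_{\sonc}(g)}$ with $g - \varepsilon\sum_{k,i}(\x^{2D\bu_i^k}+\x^{2D\bv_i^k}+\x^{2D\bw_i^k}) \in \SONC$, apply Theorem \ref{sec4-thm} to obtain an SOBS decomposition, and substitute back to conclude for $f - \varepsilon\sum_{k,i}(\x^{\bu_i^k}+\x^{\bv_i^k}+\x^{\bw_i^k})$. Your write-up is in fact slightly more careful than the paper's (you check that the perturbation term is a sum of monomial squares, hence SONC, and that the result is still a PN-polynomial), so it is a valid rendering of the same argument.
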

\begin{proof}
Let us define $g(\x) := f(\x^D)$.
Note that the degree of $g$ is at most $D d$. 
In addition, for each each $\gg \in \{\bu_i^k,\bv_i^k,\bw_i^k\}_{i,k}$ involved in the SOBS decomposition of $f$, one has $\x^{D \gg} \in \R[\x]$. 
Thus $g$ is a sum of binomial squares and one has $||D\gg||_1\leq D d$.
Since the polynomial $f$ is in the interior of the SONC cone, then the polynomial $g$ is also in the interior of the SONC cone. 
Then by definition, there exists $N \in \N$  such that for $\varepsilon := 2^{-N}$, one has $g - \varepsilon \sum_{k=1}^l \sum_{i=1}^{s_k} (\x^{D \bu_i^k} + \x^{D \bv_i^k} + \x^{D \bw_i^k}) \in \SONC$.
Let us define the polynomial $h(\x,z) := f(\x^D) - z \sum_{k=1}^l \sum_{i=1}^{s_k} (\x^{D \bu_i^k} + \x^{D \bv_i^k} + \x^{D \bw_i^k})$ , 
and consider the algebraic set $V$ defined by:
\[
V := \left\{ (\x,z) \in \R^{n+1} : h(\x,z) = \frac{\partial h }{\partial x_1} = \dots = \frac{\partial h}{\partial x_n} = 0 \right\} \,.
\] 
The degree of $h$ is also $D d$. 
Using Proposition~A.1 from  \cite{magron2018exact}, there exists a polynomial in $\Z[z]$ of degree less than $(D d +1)^{n+1}$ with coefficients of bit size less than $\tau \cdot (4 D d + 6)^{3 n + 3}$ such that its set of real roots contains the projection of $V$ on the $z$-axis. 
By Lemma~\ref{lemma:mignotte}, it is enough to take $N \leq \tau \cdot (4 D d + 6)^{3 n + 3}$.
By Lemma~\ref{bound-denom}, one has $D \leq (1+nd^2)^{n+1}$. The desired complexity bound follows from the fact that  $d\geq 2$ implies that $6 \leq d(1+nd^2)^{n+1}$.
By Theorem \ref{sec4-thm}, the polynomial $h(\x,\varepsilon)$ admits an SOBS decomposition. 
Therefore $h(\x^{1/D},\varepsilon)$ also admits an SOBS decomposition, the desired result.
\end{proof}
\begin{lemma}\label{bound-triple}
Let $f\in\R[\x]$ be as in Lemma \ref{lemma:sonc_distance} with $t$ terms. 
Let $L$ be the total number of triples $(\bu_i^k,\bv_i^k,\bw_i^k)$ appearing in \eqref{eq:soncsocp}, i.e., $L=\sum_{k=1}^l s_k$.
Then $L<\frac{1}{8}tn((n+1)\log_2(1+nd^2)+3)^2 \in \bigotilde{(d^n n^3)}$.
\end{lemma}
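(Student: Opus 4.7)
The plan is to bound $L = \sum_{k=1}^{l} s_k$ by separately estimating $l$, the number of simplices produced by ${\tt SimplexCover}$, and each $s_k$, the size of the rational mediated set associated to the $k$-th simplex (minus its vertices). The bulk of the work is the second estimate; the first is a straightforward accounting of Algorithm \ref{alg:simplexcover}.

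First, I would unpack how each $M_k$ is built by ${\tt MedSet}(\TT_k, \b_k)$ in Algorithm \ref{alg:medset}. Following the proof of Lemma \ref{sec4-lm4}, the algorithm performs $m - 1 \le n$ iterations (one per vertex of the trellis $\TT_k$, which has $m \le n + 1$ vertices), each calling ${\tt LMedSet}$ to produce a one-dimensional rational mediated set along a line segment. By Lemma \ref{sec4-lm3}, transported via the affine bijection between a segment and the line of integers that underlies Lemma \ref{sec4-lm2}, each such contribution has size at most $\frac{1}{2}(\log_2(p_k) + \frac{3}{2})^2$, where $p_k$ is the denominator common to the barycentric coordinates of $\b_k$ in $\TT_k$. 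Summing yields
\[
s_k \le \tfrac{n}{2}\bigl(\log_2(p_k) + \tfrac{3}{2}\bigr)^2.
\]
Next, I would invoke Hadamard's inequality exactly as in the proof of Lemma \ref{bound-denom} on the linear system determining the barycentric coordinates to obtain $p_k \le (1 + n d^2)^{(n+1)/2}$, so that $\log_2(p_k) \le \frac{n+1}{2}\log_2(1 + n d^2)$. Substituting gives
\[
s_k \le \tfrac{n}{8}\bigl((n+1)\log_2(1 + n d^2) + 3\bigr)^2.
\]

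To bound $l$, I would walk through the three \textbf{while}-loops of Algorithm \ref{alg:simplexcover}. In the first loop each iteration strictly decreases both $|U|$ and $|V|$; when it terminates, either the second branch (which strictly decreases $|V|$ at every iteration) or the third branch (which strictly decreases $|U|$ at every iteration) runs next. A careful bookkeeping therefore gives $l \le |\Lambda(f)| + |\Gamma(f)| = t$. Multiplying the two estimates yields the explicit bound $L < \frac{1}{8} t n \bigl((n+1)\log_2(1 + n d^2) + 3\bigr)^2$.

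For the asymptotic statement, I would bound the number of terms by the dimension of the ambient monomial space, $t \le \binom{n+d}{n} \in \bigo(d^n)$, and observe that the squared logarithmic factor lies in $\bigotilde(n^2)$, so that $L \in \bigotilde(d^n n^3)$. I do not foresee any hard technical obstacle; the most delicate step is the combinatorial bookkeeping for the three branches of ${\tt SimplexCover}$ which ensures $l \le t$, while the remainder is a direct composition of Lemmas \ref{sec4-lm3}, \ref{sec4-lm4} and \ref{bound-denom}.
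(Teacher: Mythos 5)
Your proposal is correct and follows essentially the same route as the paper's proof: bound the number of circuits from {\tt SimplexCover} by $t$, the number of {\tt LMedSet} calls per circuit by $n$, the size of each one-dimensional mediated sequence by $\frac{1}{2}(\log_2(p)+\frac{3}{2})^2$ via Lemmas \ref{sec4-lm3} and \ref{sec4-lm2}, and $p$ by Hadamard's inequality as in Lemma \ref{bound-denom}, then multiply and use $t\in\bigo(d^n)$ for the asymptotics. The only cosmetic difference is that you spell out the bookkeeping for the three loops of Algorithm \ref{alg:simplexcover}, which the paper leaves implicit.
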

\begin{proof}
First note that the number of circuits produced by the procedure ${\tt SimplexCover}(\Lambda(f), \Gamma(f))$ is less than $t$. Consider any circuit $(\TT,\b)$ such that $\TT\subseteq\Lambda(f)$ and $\b\in\Gamma(f)$. Without loss of generality, assume that $\TT=\{\a_1,\ldots,\a_{n+1}\}$. We write $\b=\sum_{i=1}^{n+1}\frac{q_i}{p}\a_i$, where $p=\sum_{i=1}^{n+1}q_i$, $p,q_i\in\N\backslash\{0\}$, gcd$(p,q_1,\ldots,q_{n+1})=1$. Then by the proof of Lemma \ref{bound-denom}, $p\le(1+nd^2)^{\frac{n+1}{2}}$. For each triple $(\a_k,\b_k,\b_{k-1})$, $1\le k\le n-1$ or $(\a_{n},\a_{n+1},\b_{n-1})$ appearing in the algorithm ${\tt MedSet}$, by Lemma \ref{sec4-lm3} and the proof of Lemma \ref{sec4-lm2}, the number of triples $(\bu_i,\bv_i,\bw_i)$ produced by the algorithm ${\tt LMedSet}$ is less than $\frac{1}{2}(\log_2(p)+\frac{3}{2})^2$. Putting all above together, we have $L<tn\cdot\frac{1}{2}(\log_2((1+nd^2)^{\frac{n+1}{2}})+\frac{3}{2})^2=\frac{1}{8}tn((n+1)\log_2(1+nd^2)+3)^2$.
Note that the number of terms $t$ is less than $2d^n$:
\[
t = \binom{n+d}{n} = \frac{(n+d)\dots (n+1)}{n!} = \left(1+\frac{d}{n}\right) \left(1+\frac{d}{n-1}\right) \dots (1+d) \leq d^{n-1}(1+d) \leq 2 d^n\,,
\]
yielding the final complexity result.
\end{proof}

\begin{remark}
By Lemma \ref{bound-triple}, the number of second-order cones used in (\textrm{SONC-SOCP}) \eqref{eq:soncsocp} scales linearly in the number of terms of the polynomial.
\end{remark}

\begin{theorem}
\label{th:arithcost}
Let $f, D$ be as in Lemma \ref{lemma:sonc_distance}. Let $L$ be the total number of triples $(\bu_i^k,\bv_i^k,\bw_i^k)$ appearing in \eqref{eq:soncsocp}, i.e., $L=\sum_{k=1}^l s_k$.
There exist $\hat{\delta}$, $\tilde{\delta}$ of bit size less than $\bigo{(\tau \cdot (4 D d + 6)^{3n+3})}$, such that ${\tt ExactSOBS}(f, \hat{\delta},\tilde{\delta})$ terminates and outputs a rational SOBS decomposition of $f$ within $\bigo{( (\tau (L)+ \tau \cdot (4  Dd + 6)^{3n+3}) \, L^{3.5} )}$ arithmetic operations, yielding a total number of 
 \[
 \bigotilde{\left(  \tau \cdot 5^{3n+3} d^{6.5n+3} n^{10.5}(1+nd^2)^{3 (n+1)^2}  \right)}
 \]
 arithmetic operations.
\end{theorem}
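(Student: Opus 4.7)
The plan is to analyze Algorithm \ref{alg:roundproject} step by step, first deriving precision requirements on $\tilde \delta$ and $\hat \delta$ from the distance of $f$ to the boundary of the SONC cone, then bounding the arithmetic cost of each subroutine, and finally combining these with the support-size bound of Lemma \ref{bound-triple}.

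First I would apply Lemma \ref{lemma:sonc_distance} to produce $\varepsilon := 2^{-N}$ with $N = \tau_{\sonc}(g)$ such that the perturbed polynomial $f - \varepsilon \sum_{k,i}(\x^{\bu_i^k}+\x^{\bv_i^k}+\x^{\bw_i^k})$ admits a rational SOBS decomposition with triples $(a_i^{k,\star}, b_i^{k,\star}, c_i^{k,\star}) \in \mathbf{K}$; this furnishes a reference interior feasible point for \eqref{eq:soncsocp}. The numerical output $(\tilde a_i^k, \tilde b_i^k, \tilde c_i^k)$ of the SOCP solver in Line \ref{line:socp} is $\tilde \delta$-close to some interior-feasible point, the rounding in Line \ref{line:round} introduces an additional $\hat \delta$-perturbation, and the projection in Lines \ref{line:proj}-\ref{line:projend} redistributes the coefficient residuals of magnitude $\bigo{L(\tilde \delta + \hat \delta)}$ across the $\eta(\gg) \leq L$ triples that involve each support point $\gg$. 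Consequently each final triple $(a_i^k, b_i^k, c_i^k)$ differs from its reference value by $\bigo{L(\tilde \delta + \hat \delta)}$, while the linear equalities \eqref{eq:eqsocp} hold exactly by construction.

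The main obstacle is certifying that after projection the triples remain in the rotated SOC $\mathbf{K}$. Since the reference triple is strictly feasible with slack bounded below by a function of $\varepsilon$, namely $2 a_i^{k,\star} b_i^{k,\star} - (c_i^{k,\star})^2 \geq 2\varepsilon^2$, a first-order perturbation argument shows that any deviation of norm at most $c_0 \varepsilon$ (for a small constant $c_0$) preserves membership in $\mathbf{K}$. Imposing $\tilde \delta + \hat \delta \leq c_0 \varepsilon / L$ is therefore sufficient, and corresponds to choosing $\tilde \delta$ and $\hat \delta$ of bit size $\bigo{\tau_{\sonc}(g) + \tau(L)} = \bigo{\tau_{\sonc}(g)}$, as claimed.

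For the cost bookkeeping, the preprocessing through {\tt SimplexCover} and {\tt MedSet} contributes $\bigotilde{L}$ arithmetic operations. A standard interior-point method applied to the SOCP \eqref{eq:soncsocp}, which features $L$ rotated three-dimensional SOCs and $\bigo{L}$ equality constraints, requires $\bigo{\sqrt{L}(\tau(L) + \tau_{\sonc}(g))}$ iterations of cost $\bigo{L^3}$ each at the prescribed precision, totalling $\bigo{(\tau(L) + \tau_{\sonc}(g)) L^{3.5}}$ arithmetic operations, while the rounding and projection steps add only $\bigo{L}$ rational operations of bit size $\bigo{\tau_{\sonc}(g)}$. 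Plugging in the bound $L \leq \bigotilde{d^n n^3}$ from Lemma \ref{bound-triple} together with $\tau(L) = \bigo{n \log d}$ then yields the final $\bigotilde{((n \log d + \tau_{\sonc}(g)) \, d^{3.5n} n^{10.5})}$ total arithmetic complexity.
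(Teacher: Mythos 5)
Your proposal follows the same route as the paper's proof: use Lemma \ref{lemma:sonc_distance} to produce a strictly feasible point of \eqref{eq:soncsocp} with slack quantified by $\varepsilon = 2^{-\tau_{\sonc}(g)}$, choose the solver and rounding precisions relative to that slack so that the projected triples stay in $\mathbf{K}$, invoke the interior-point complexity bound for the resulting SOCP, and conclude with Lemma \ref{bound-triple}. Two steps, however, need more care. First, the slack bound $2a_i^{k,\star}b_i^{k,\star}-(c_i^{k,\star})^2\geq 2\varepsilon^2$ is asserted but not derived: Lemma \ref{lemma:sonc_distance} only yields triples $(\overline a_i^k,\overline b_i^k,\overline c_i^k)\in\mathbf{K}$ for the \emph{perturbed} polynomial, and these may lie on the boundary of $\mathbf{K}$. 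One must add the $\varepsilon$-terms back, i.e.\ set $\tilde a_i^k=\overline a_i^k+\tfrac{\varepsilon}{2}$, $\tilde b_i^k=\overline b_i^k+\varepsilon$, $\tilde c_i^k=\overline c_i^k-\tfrac{\varepsilon}{2}$, and verify that the slack increases by $(2\overline a_i^k+\overline b_i^k+\overline c_i^k)\varepsilon+\tfrac{3\varepsilon^2}{4}\geq\tfrac{3\varepsilon^2}{4}$, which relies on the inequality $2a+b+c\geq 0$ valid on $\mathbf{K}$; this computation is the crux of the strict-feasibility claim. Second, your estimate of $\bigo(L(\tilde\delta+\hat\delta))$ for the per-triple deviation after projection is coarser than necessary: the residual at a support point $\gg$ is at most $5\eta(\gg)\hat\delta$ and the projection steps divide it by $\eta(\gg)$ (or $2\eta(\gg)$), so each triple moves by only $\bigo(\hat\delta)$, independently of $L$. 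With your coarser bound the precision requirement becomes $\hat\delta\lesssim\varepsilon/L$, whose bit size is $\bigo(\tau_{\sonc}(g)+\tau(L))$, and the simplification to $\bigo(\tau_{\sonc}(g))$ would need the unproved hypothesis $\tau(L)=\bigo(\tau_{\sonc}(g))$; the sharper analysis (taking $\hat\delta=\varepsilon/5$) avoids this. The final arithmetic-operation counts are unaffected either way, since $\tau(L)$ already appears in the stated complexity.
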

\begin{proof}
Let $\varepsilon$ be as in Lemma \ref{lemma:sonc_distance}. 
Then, there exist sequences of triples $\{(\overline{a}_i^k,  \overline{b}_i^k, \overline{c}_i^k)\}_{i,k}$ as in \eqref{eq:soncsocp} such that 
\begin{align*}
f - \varepsilon \sum_{k=1}^l \sum_{i=1}^{s_k} (\x^{\bu_i^k} + \x^{\bv_i^k} + \x^{\bw_i^k}) = & \sum_{k=1}^l\sum_{i=1}^{s_k}(2  \overline  a_i^k\x^{\bv_i^k}+ \overline{b}_i^k\x^{\bw_i^k}-2  \overline c_i^k\x^{\bu_i^k}), \quad(\overline a_i^k, \overline b_i^k, \overline   c_i^k)\in\mathbf{K},\forall i,k.
\end{align*}
(1) Let us define $ \delta := \min \{\frac{3 \varepsilon^2}{4},\frac{\varepsilon}{2} \}$, $\tilde a_i^k := \overline a_i^k + \frac{\varepsilon}{2}$, $\tilde b_i^k := \overline b_i^k + \varepsilon$ and $\tilde c_i^k := \overline c_i^k - \frac{\varepsilon}{2}$.
Let us show that $\{(\tilde{a}_i^k, \tilde{b}_i^k, \tilde{c}_i^k)\}_{i,k}$ is a strictly feasible solution of \eqref{eq:soncsocp} (with $\xi = 0$) such that $\tilde a_i^k \geq  \delta$, $b_i^k \geq \delta$ and $2 \tilde a_i^k \tilde b_i^k - (\tilde c_i^k)^2 \geq  \delta$.\\
According to these definitions, one has
\[
f  =  \sum_{k=1}^l\sum_{i=1}^{s_k}(2  \tilde a_i^k\x^{\bv_i^k}+ \tilde b_i^k\x^{\bw_i^k}-2  \tilde c_i^k\x^{\bu_i^k})\,,
\]
and 
\begin{align}
\label{eq:eqsocptilde}
 \sum_{ \underset{ \bv_p^q=\gg}{p,q}}2\tilde{a}_p^q+\sum_{\underset{ \bw_p^q=\gg}{p,q}}\tilde{b}_p^q-\sum_{\underset{ \bu_p^q=\gg}{p,q}}2\tilde{c}_p^q - f_{\gg}  = 0 \,,
\end{align}
for each $\gg \in \{\bu_i^k,\bv_i^k,\bw_i^k\}_{i,k}$.
Since $\overline a_i^k, \overline b_i^k \geq 0$, one has $\tilde a \geq \frac{\varepsilon}{2} \geq  \delta$, $\tilde b \geq \varepsilon \geq  \delta$.
In addition, for all $(a,b,c) \in \mathbf{K}$, one has $2 a b - c^2 \geq 0$, which implies that $- \sqrt{2a b} \leq c \leq \sqrt{2 ab}$ and $2 a + b + c \geq 2 a + b - \sqrt{2 a b} = (\sqrt{2 a} - \sqrt{b})^2 + \sqrt{2a b} \geq 0$.
Hence, one has
\[
2 \tilde a_i^k \tilde b_i^k - (\tilde c_i^k)^2 = (2 \overline a_i^k + \varepsilon)(\overline b_i^k + \varepsilon) - (\overline c_i^k - \frac{\varepsilon}{2})^2 = 2 \overline a_i^k  \overline b_i^k - (\overline c_i^k)^2 + (2 \overline a_i^k + \overline b_i^k + \overline c_i^k) \varepsilon + \frac{3 \varepsilon^2}{4} \geq \frac{3 \varepsilon^2}{4} \geq  \delta \,.
\]
With this choice, one has $\tau( \delta) \leq 2 \tau (\varepsilon) + \tau(\frac{3}{4}) $. So 
$\delta$ has bit size less than $\bigo{(\tau \cdot (4 D d + 6)^{3n+3})}$.\\
%
(2) \if{Let us fix $\tilde \delta > 0$. For any $\tilde \delta$-feasible solution $(\tilde{\A}_i^k, \tilde{b}_i^k, \tilde{c}_i^k)_{i,k}$ for \eqref{eq:soncsocp} (with $\xi = 0$), instead of having \eqref{eq:eqsocp}, one has
\begin{align}
\label{eq:eqsocptilde}
\left|  \sum_{ \underset{ \bv_p^q=\gg}{p,q}}2\tilde{\A}_p^q+\sum_{\underset{ \bw_p^q=\gg}{p,q}}\tilde{b}_p^q-\sum_{\underset{ \bu_p^q=\gg}{p,q}}2\tilde{c}_p^q - f_{\gg} \right| \leq \tilde \delta \,,
\end{align}
for each $\gg \in \{\bu_i^k,\bv_i^k,\bw_i^k\}$.
}\fi
Let us assume that one relies in Algorithm \ref{alg:roundproject} on a rounding procedure with precision $\hat \delta$ so that $|\tilde a_i^k - \hat a_i^k| \leq \hat \delta$ (and similarly for $\tilde b_i^k$ and $\tilde c_i^k$).
By using the triangular inequality, one obtains
\begin{align}
\label{eq:eqsocphat}
\left|  \sum_{ \underset{ \bv_p^q=\gg}{p,q}}2\hat{a}_p^q+\sum_{\underset{ \bw_p^q=\gg}{p,q}}\hat{b}_p^q-\sum_{\underset{ \bu_p^q=\gg}{p,q}}2\hat{c}_p^q - f_{\gg} \right| \leq 
  \sum_{ \underset{ \bv_p^q=\gg}{p,q}}2|\hat{a}_p^q - \tilde{\A}_p^q | +
    \sum_{ \underset{ \bw_p^q=\gg}{p,q}}|\hat{b}_p^q - \tilde{b}_p^q | +
      \sum_{ \underset{ \bu_p^q=\gg}{p,q}}2|\hat{c}_p^q - \tilde{c}_p^q | 
  \leq 5 \eta(\gg) \hat \delta  \,,
\end{align}
for each $\gg \in \{\bu_i^k,\bv_i^k,\bw_i^k\}_{i,k}$.
Let us consider the projection steps of Algorithm \ref{alg:roundproject}. 
For $a_i^k$, $b_i^k$ and $c_i^k$ defined from Line \ref{line:proj} to Line \ref{line:projend}, one has 
\begin{align}
\label{eq:hatdelta}
a_i^k \geq \hat a_i^k - \frac{5}{2} \hat \delta    \,, \\
b_i^k \geq \hat b_i^k - 5 \hat \delta  \,, \\
 \tilde c_i^k + \frac{5}{2}\hat \delta  \geq c_i^k  \,.
\end{align}
Let us choose  $ \hat \delta =  \frac{\varepsilon}{5}$.
Therefore in the worse case scenario, one has $a_i^k = \tilde a_i^k - \frac{\varepsilon}{2} = \overline a_i^k$, $b_i^k = \tilde b_i^k - \varepsilon = \overline b_i^k$ and $c_i^k = \tilde c_i^k + \frac{\varepsilon}{2} = \overline c_i^k$.

To conclude, if one solves the SOCP problem in Algorithm \ref{alg:roundproject} at precision $\tilde\delta$ at least $\delta$ and rounding accuracy $\hat\delta$, then the output will always provide a valid SOBS decomposition.
Both accuracy parameters have bit size less than $\bigo{(\tau \cdot (4 D d + 6)^{3n+3})}$.

The arithmetic complexity of the procedure to solve the SOCP problem at accuracy $\tilde{\delta}$ is derived from \S~4.6.2 in \cite{ben2001lectures}.
There are $3 L $ inequality constraints and $3 L $ equality constraints (which can themselves be cast as $6L$ inequality constraints), involving $3 L$ variables, yielding an overall complexity of $\bigo{\left(\sqrt{9L+1} \cdot 3 L \cdot  ((3L)^2 + 9L + 9L) \cdot \log \frac{L + \delta^2}{\delta} \right)}$, which leads to a number of $\bigo{\left( (\tau (L)+ \tau \cdot (4  Dd + 6)^{3n+3}) \, L^{3.5} \right)}$ arithmetic operations.
Using the bound on $L$ stated in Lemma \ref{bound-triple}, we obtain a total number of $\bigotilde{\left(  \tau \cdot 5^{3n+3} d^{6.5n+3} n^{10.5}(1+n d^2)^{3 (n+1)^2}  \right)}$ arithmetic operations.
The simplex cover algorithm ${\tt SimplexCover}$ solves $\bigo{(t)}$ linear programs involving $\bigo{(t)}$  variables. 
The complexity of a linear program is $O(t^{2.5})$. So the total complexity is $O(t^{3.5})$.
Let $p$ be as in Lemma \ref{bound-denom} which is bounded by $(1+nd^2)^{\frac{n+1}{2}}$. 
The procedure ${\tt MedSeq}(p,q)$ requires $\bigo{(\log_2 p )}$ iterations and each iteration needs $\bigo{(\log_2 p)}$ arithmetic operations. 
Since we have $\bigo{(t)}$  circuits and each circuit corresponds to (at most) $n$ mediated sequences, the total complexity is $\bigo{(t n \log_2(p)^2)}=\bigo{(t n^3 \log_2(1+n d^2)^2)}$.
Overall, this shows that the total number of arithmetic operations required by these two steps of ${\tt ExactSOBS}$ have a negligible cost by comparison with the numerical SOCP procedure, yielding the desired result.
\end{proof}

\section{Numerical experiments}
\label{sec:benchs}
In this section we present numerical results of the proposed algorithms for unconstrained POPs. 
Our tool, called {\tt SONCSOCP}, implements the simplex cover procedure (see Algorithm \ref{alg:simplexcover}) as well as the procedure ${\tt MedSet}$ (see Algorithm \ref{alg:medset}) computing rational mediated sets and computes the optimal value $\xi_{socp}$ of the SOCP \eqref{eq:soncsocp} with Mosek; see \cite{mosek}. 
All experiments were performed on an Intel Core i5-8265U@1.60GHz CPU with 8GB RAM memory and WINDOWS 10 system. {\tt SONCSOCP} is available at \href{https://github.com/wangjie212/SONCSOCP}{github:{\tt SONCSOCP}}.

Our benchmarks are issued from the database of randomly generated polynomials provided by Seidler and de Wolff in \cite{se}. 
Depending on the Newton polytope, these benchmarks are divided into three classes: the ones with standard simplices, the ones with general simplices and the ones with arbitrary Newton polytopes. See \cite{se} for the details on the construction of these polynomials.

\begin{table}[htbp]
\caption{The notation}\label{table1}
\begin{center}
\begin{tabular}{|c|c|}
\hline
$n$&number of variables\\
\hline
$d$&degree\\
\hline
$t$&number of terms\\
\hline
$l$&lower bound on the number of inner terms\\
\hline
opt&optimal value\\
\hline
time&running time in seconds\\
\hline
bit&bit size\\
\hline
\end{tabular}
\end{center}
\end{table}

\subsection{Computing a lower bound via SONC optimization}
In this subsection, we perform SONC optimization \eqref{eq:soncsocp} and compare the performance of {\tt SONCSOCP} with that of {\tt POEM}, which relies on the ECOS solver to solve geometric programs (see \cite{se} for more details). To measure the quality of a given lower bound $\xi_{lb}$, we rely on the ``{\tt local\_min}" function available in {\tt POEM} which computes an upper bound $\xi_{min}$ on the minimum of a  polynomial. The relative optimality gap is defined by 
$\frac{|\xi_{min}-\xi_{lb}|}{|\xi_{min}|}.$

\paragraph{\textbf{Standard simplex}}
For the standard simplex case, we take $10$ polynomials of different types (labeled by $N$). Running time and lower bounds obtained with {\tt SONCSOCP} and {\tt POEM} are displayed in Table \ref{tb2:standard}. Note that for polynomials with $\Lambda(\cdot)$ forming a trellis, the simplex cover is unique, thus the bounds obtained by {\tt SONCSOCP} and {\tt POEM} are the same theoretically, which is also reflected in Table \ref{tb2:standard}. 
For each polynomial, the relative optimality gap is less than $1\%$ and for $8$ out of $10$ polynomials, it is less than $0.1\%$ (see Figure \ref{fg2:standard}).

\begin{table}[htbp]
{\small
\begin{center}
\begin{tabular}{c|c|cccccccccc}
\multicolumn{2}{c|}{$N$}&$1$&$2$&$3$&$4$&$5$&$6$&$7$&$8$&$9$&$10$\\
\hline
\multicolumn{2}{c|}{$n$}&$10$&$10$&$10$&$20$&$20$&$20$&$30$&$30$&$40$&$40$\\
\multicolumn{2}{c|}{$d$}&$40$&$50$&$60$&$40$&$50$&$60$&$50$&$60$&$50$&$60$\\
\multicolumn{2}{c|}{$t$}&$20$&$20$&$20$&$30$&$30$&$30$&$50$&$50$&$100$&$100$\\
\hline
\multirow{2}*{time}&{\tt SONCSOCP}&$0.04$&$0.04$&$0.04$&$0.14$&$0.14$&$ 0.13$&$0.43$&$0.40$&$2.23$&$2.21$\\
\cline{3-12}
&{\tt POEM}&$0.26$&$0.27$&$0.26$&$0.43$&$0.44$&$0.42$&$1.78$&$1.79$&$2.20$&$2.25$\\
\hline
\multirow{2}*{opt}&{\tt SONCSOCP}&$3.52$&$3.52$&$3.52$&$2.64$&$2.64$&$2.64$&$2.94$&$2.94$&$4.41$&$4.41$\\
\cline{3-12}
&{\tt POEM}&$3.52$&$3.52$&$3.52$&$2.64$&$2.64$&$2.64$&$2.94$&$2.94$&$4.41$&$4.41$\\
\hline
\end{tabular}
\end{center}}
\caption{Results for the standard simplex case}\label{tb2:standard}
\end{table}

\begin{figure}[htbp]
\begin{center}
\begin{tikzpicture}
\begin{axis}[xlabel={$N$},
ylabel={running time (s)},
legend pos=north west]
\addplot[color=blue,mark=o]
coordinates{(1,0.04)(2,0.04)(3,0.04)(4,0.14)(5,0.14)(6,0.13)(7,0.43)(8,0.40)(9,2.23)(10,2.21)};
\addlegendentry{SONCSOCP}
\addplot[color=red,mark=star]
coordinates{(1,0.26)(2,0.27)(3,0.26)(4,0.43)(5,0.44)(6,0.42)(7,1.78)(8,1.79)(9,2.20)(10,2.25)};
\addlegendentry{POEM}
\end{axis}
\end{tikzpicture}
\end{center}
\caption{Running time for the standard simplex case}\label{fg1:standard}
\end{figure}
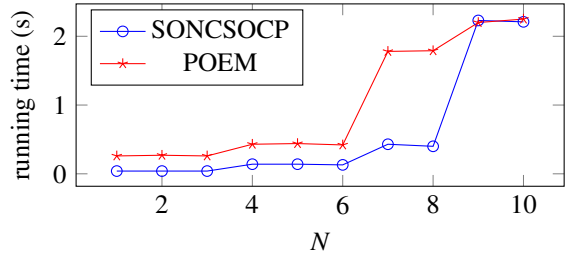

\begin{figure}[htbp]
\begin{center}
\begin{tikzpicture}
\begin{axis}[xlabel={$N$},
ylabel={Relative optimality gap (\%)},
legend pos=north east]
\addplot[color=blue,mark=o]
coordinates{(1,0.02)(2,0.13)(3,0.10)(4,0.0001)(5,0.0001)(6,0.004)(7,0.00001)(8,0.0001)(9,0.0001)(10,0.001)};
\addlegendentry{SONCSOCP}
\addplot[color=red,mark=star]
coordinates{(1,0.02)(2,0.13)(3,0.10)(4,0.0001)(5,0.0001)(6,0.004)(7,0.00001)(8,0.0001)(9,0.0001)(10,0.001)};
\addlegendentry{POEM}
\end{axis}
\end{tikzpicture}
\end{center}
\caption{Relative optimality gap for the standard simplex case}\label{fg2:standard}
\end{figure}
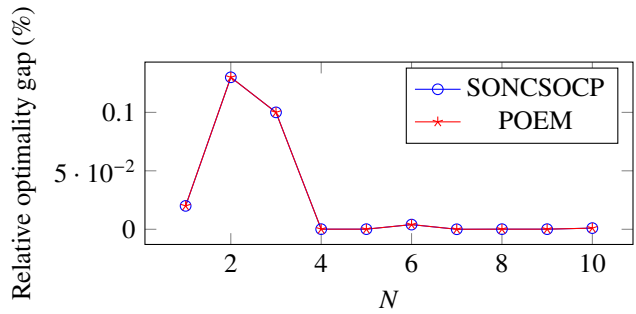

\paragraph{\textbf{General simplex}}
Here, we take $10$ polynomials of different types (labeled by $N$). 
Running time and lower bounds obtained with {\tt SONCSOCP} and {\tt POEM} are displayed in Table \ref{tb1:simplex}. 
As before, the SONC lower bounds obtained by {\tt SONCSOCP} and {\tt POEM} are the same. For each polynomial except for the one corresponding to $N = 7$, the relative optimality gap is within $30\%$, and for $6$ out of $10$ polynomials, the gap is below $1\%$ (see Figure \ref{fg2:simplex}). {\tt POEM} fails to obtain a lower bound for the instance $N = 10$ by returning $-$Inf.
Figure \ref{fg1:simplex} shows that, overall, the running times of {\tt SONCSOCP} and {\tt POEM} are close. 
{\tt SONCSOCP} is faster than {\tt POEM} for the instance $N = 6$, possibly because better performance is obtained when the degree is relatively low.

\begin{table}[htbp]
{\small
\begin{center}
\begin{tabular}{c|c|cccccccccc}
\multicolumn{2}{c|}{$N$}&$1$&$2$&$3$&$4$&$5$&$6$&$7$&$8$&$9$&$10$\\
\hline
\multicolumn{2}{c|}{$n$}&$10$&$10$&$10$&$10$&$10$&$10$&$10$&$10$&$10$&$10$\\
\multicolumn{2}{c|}{$d$}&$20$&$30$&$40$&$50$&$60$&$20$&$30$&$40$&$50$&$60$\\
\multicolumn{2}{c|}{$t$}&$20$&$20$&$20$&$20$&$20$&$30$&$30$&$30$&$30$&$30$\\
\hline
\multirow{2}*{time}&{\tt SONCSOCP}&$0.32$&$0.29$&$0.36$&$0.48$&$0.54$&$ 0.56$&$0.73$&$0.88$&$1.04$&$1.04$\\
\cline{2-12}
&{\tt POEM}&$0.28$&$0.31$&$0.31$&$0.31$&$0.43$&$0.74$&$0.75$&$0.74$&$0.72$&$0.76$\\
\hline
\multirow{2}*{opt}&{\tt SONCSOCP}&$1.18$&$0.22$&$0.38$&$0.90$&$0.06$&$4.00$&$-4.64$&$1.62$&$2.95$&$5.40$\\
\cline{2-12}
&{\tt POEM}&$1.18$&$0.22$&$0.38$&$0.90$&$0.06$&$4.00$&$-4.64$&$1.62$&$2.95$&$-$Inf\\
\end{tabular}
\end{center}}
\caption{Results for the general simplex case}\label{tb1:simplex}
\end{table}

\begin{figure}[htbp]
\begin{center}
\begin{tikzpicture}
\begin{axis}[xlabel={$N$},
ylabel={running time (s)},
legend pos=north west]
\addplot[color=blue,mark=o]
coordinates{(1,0.32)(2,0.29)(3,0.36)(4,0.48)(5,0.54)(6,0.56)(7,0.73)(8,0.88)(9,1.04)(10,1.04)};
\addlegendentry{SONCSOCP}
\addplot[color=red,mark=star]
coordinates{(1,0.28)(2,0.31)(3,0.31)(4,0.31)(5,0.43)(6,0.74)(7,0.75)(8,0.74)(9,0.72)(10,0.76)};
\addlegendentry{POEM}
\end{axis}
\end{tikzpicture}
\end{center}
\caption{Running time for the general simplex case}\label{fg1:simplex}
\end{figure}
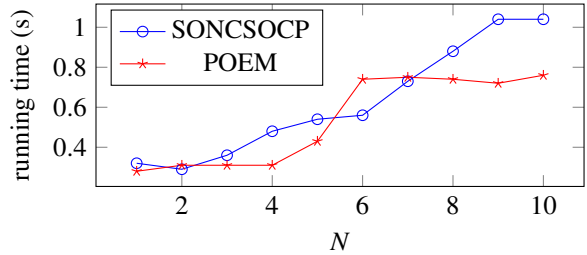

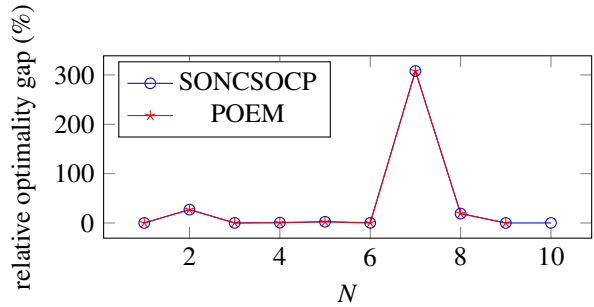
\begin{figure}[htbp]
\begin{center}
\begin{tikzpicture}
\begin{axis}[xlabel={$N$},
ylabel={relative optimality gap (\%)},
legend pos=north west]
\addplot[color=blue,mark=o]
coordinates{(1,0.069)(2,27.13)(3,0.102)(4,0.708)(5,2.46)(6,0.27)(7,308)(8,19.11)(9,0.016)(10,0.143)};
\addlegendentry{SONCSOCP}
\addplot[color=red,mark=star]
coordinates{(1,0.069)(2,27.13)(3,0.102)(4,0.708)(5,2.46)(6,0.27)(7,308)(8,19.11)(9,0.016)};
\addlegendentry{POEM}
\end{axis}
\end{tikzpicture}
\end{center}
\caption{Relative optimality gap for the general simplex case}\label{fg2:simplex}
\end{figure}

\paragraph{\textbf{Arbitrary polytope}}
For the arbitrary polytope case, 
we take $20$ polynomials of different types (labeled by $N$). 
With regard to these instances, 
{\tt POEM} always throws an error ``expected square matrix''. 
Running time and lower bounds obtained with {\tt SONCSOCP} are displayed in Table \ref{tb1:arbitrary}. 
The relative optimality gap is always within $25\%$ and within $1\%$ for $17$ out of $20$ polynomials  (see Figure \ref{fg2:arbitrary}).

\begin{table}[htbp]
{\small
\begin{center}
\begin{tabular}{c|c|cccccccccc}
\multicolumn{2}{c|}{$N$}&$1$&$2$&$3$&$4$&$5$&$6$&$7$&$8$&$9$&$10$\\
\hline
\multicolumn{2}{c|}{$n$}&$10$&$10$&$10$&$10$&$10$&$10$&$10$&$10$&$10$&$10$\\
\multicolumn{2}{c|}{$d$}&$20$&$20$&$20$&$30$&$30$&$30$&$40$&$40$&$40$&$50$\\
\multicolumn{2}{c|}{$t$}&$30$&$100$&$300$&$30$&$100$&$300$&$30$&$100$&$300$&$30$\\
\multicolumn{2}{c|}{$l$}&$15$&$71$&$231$&$15$&$71$&$231$&$15$&$71$&$231$&$15$\\
\hline
\multirow{2}*{{\tt SONCSOCP}}&time&$0.38$&$1.75$&$6.86$&$0.64$&$3.13$&$ 11.3$&$0.72$&$4.01$&$14.6$&$0.76$\\
\cline{2-12}
&opt&$0.70$&$3.32$&$31.7$&$3.31$&$15.3$&$3.31$&$0.47$&$5.42$&$38.7$&$1.56$\\
\hline
\hline
\multicolumn{2}{c|}{$N$}&$11$&$12$&$13$&$14$&$15$&$16$&$17$&$18$&$19$&$20$\\
\hline
\multicolumn{2}{c|}{$n$}&$10$&$10$&$10$&$10$&$10$&$20$&$20$&$20$&$20$&$20$\\
\multicolumn{2}{c|}{$d$}&$50$&$50$&$60$&$60$&$60$&$30$&$30$&$40$&$40$&$40$\\
\multicolumn{2}{c|}{$t$}&$100$&$300$&$30$&$100$&$300$&$50$&$100$&$50$&$100$&$200$\\
\multicolumn{2}{c|}{$l$}&$71$&$231$&$15$&$71$&$231$&$5$&$15$&$5$&$15$&$35$\\
\hline
\multirow{2}*{{\tt SONCSOCP}}&time&$4.41$&$16.8$&$1.84$&$11.2$&$42.4$&$ 3.20$&$8.84$&$2.60$&$10.5$&$38.7$\\
\cline{2-12}
&opt&$0.20$&$7.00$&$3.31$&$2.52$&$23.4$&$0.70$&$4.91$&$4.13$&$2.81$&$9.97$\\
\hline
\end{tabular}
\end{center}}
\caption{Results for the arbitrary polytope case}\label{tb1:arbitrary}
\end{table}


\begin{figure}[htbp]
\begin{center}
\begin{tikzpicture}
\begin{axis}[xlabel={$N$},
ylabel={relative optimality gap (\%)},
legend pos=north west]
\addplot[color=blue,mark=o]
coordinates{(1,1.56)(2,0.0015)(3,0.0006)(4,0.02)(5,0.004)(6,0.002)(7,0.25)(8,0.01)(9,0.0008)(10,0.026)(11,0.023)(12,0.008)(13,0.02)(14,0.07)(15,0.04)(16,0.01)(17,0.43)(18,0.0002)(19,22.9)(20,24.8)};
\addlegendentry{SONCSOCP}
\end{axis}
\end{tikzpicture}
\end{center}
\caption{Relative optimality gap for the arbitrary polytope case}\label{fg2:arbitrary}
\end{figure}
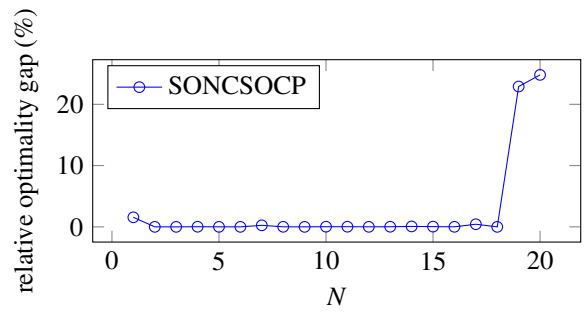

\subsection{Exact nonnegativity certificates}
In this subsection, we certify nonnegativity of polynomials by computing an exact SOBS decomposition via the procedure ${\tt ExactSOBS}$. The benchmarks are selected to be in the interior of the SONC cone. We take the number of variables $n=4,8,10$, the degree $d=10,20,30$ and the number of terms $t=20,30,50,100$. We set the precision parameter $\tilde \delta$ for the SOCP solver (Mosek) to be $10^{-8}$ and the rounding precision $\hat \delta$ to be $10^{-5}$.
The results are displayed in Table \ref{exact:4}, \ref{exact:8} and \ref{exact:10} respectively. For polynomials in each table, their Newton polytope is either a standard simplex or an arbitrary polytope. The notation ``-" indicates that the corresponding polynomial does not exist in the database.

As we can see from the tables, the procedure ${\tt ExactSOBS}$ is very efficient to compute an exact SOBS decomposition. Even for the largest instance ($n=10,d=30,t=100$), it takes around $6$ seconds to get the result. We also observe that (somewhat surprisingly) the time spent for symbolic computation and the time spent for numerical computation are nearly equal.
Note that we were not able to compare our certification procedure with the current version of the ${\tt POEM}$ software library, since the rounding-projection procedure from \cite{ma19} seems to be not available anymore.

\begin{table}[htbp]
{\small
\begin{center}
\begin{tabular}{c|cc|cc|cc||cc|cc|cc}
\multicolumn{7}{c||}{$n=4$, standard simplex}&\multicolumn{6}{c}{$n=4$, arbitrary polytope}\\
\hline
\multirow{2}*{$t$}&\multicolumn{2}{c|}{$d=10$}&\multicolumn{2}{c|}{$d=20$}&\multicolumn{2}{c||}{$d=30$}&\multicolumn{2}{c|}{$d=10$}&\multicolumn{2}{c|}{$d=20$}&\multicolumn{2}{c}{$d=30$}\\
\cline{2-7}\cline{8-13}
&time&bit&time&bit&time&bit&time&bit&time&bit&time&bit\\
\hline
20	&0.06	&123	&0.06	&154	&0.07	&162 &0.07	&109	&0.07	&129	&0.13	&118\\
30	&0.09	&189	&0.09	&193	&0.09	&199 &0.09	&135	&0.12	&106	&0.16	&121\\
50	&0.09	&248	&0.12	&282	&0.14	&332 &0.22	&195	&0.13	&124	&0.35	&226\\
100	&0.16	&362	&0.18	&364	&-	    &- &-	&-	&-	&-	&0.48	&210\\
\end{tabular}
\end{center}}
\caption{Exact SOBS decompositions for polynomials with four variables}\label{exact:4}
\end{table}

\begin{table}[htbp]
{\small
\begin{center}
\begin{tabular}{c|cc|cc|cc||cc|cc|cc}
\multicolumn{7}{c||}{$n=8$, standard simplex}&\multicolumn{6}{c}{$n=8$, arbitrary polytope}\\
\hline
\multirow{2}*{$t$}&\multicolumn{2}{c|}{$d=10$}&\multicolumn{2}{c|}{$d=20$}&\multicolumn{2}{c||}{$d=30$}&\multicolumn{2}{c|}{$d=10$}&\multicolumn{2}{c|}{$d=20$}&\multicolumn{2}{c}{$d=30$}\\
\cline{2-7}\cline{8-13}
&time&bit&time&bit&time&bit&time&bit&time&bit&time&bit\\
\hline
20	&-	&-	&0.07	&135	&0.07	&144		&0.13	&101	&0.27	&138	&0.19	&140\\
30	&-	&-	&0.11	&177	&0.13	&184		&0.17	&141	&0.31	&143	&0.56	&172\\
50	&-	&-	&0.18	&273	&0.18	&279		&0.29	&144	&0.75	&212	&1.28	&206\\
100	&-	&-	&0.45	&356	&0.46	&419	&0.81	&277	&2.61	&333	&3.45	&264\\
\end{tabular}
\end{center}}
\caption{Exact SOBS decompositions for polynomials with eight variables}\label{exact:8}
\end{table}

\begin{table}[htbp]
{\small
\begin{center}
\begin{tabular}{c|cc|cc|cc||cc|cc|cc}
\multicolumn{7}{c||}{$n=10$, standard simplex}&\multicolumn{6}{c}{$n=10$, arbitrary polytope}\\
\hline
\multirow{2}*{$t$}&\multicolumn{2}{c|}{$d=10$}&\multicolumn{2}{c|}{$d=20$}&\multicolumn{2}{c||}{$d=30$}&\multicolumn{2}{c|}{$d=10$}&\multicolumn{2}{c|}{$d=20$}&\multicolumn{2}{c}{$d=30$}\\
\cline{2-7}\cline{8-13}
&time&bit&time&bit&time&bit&time&bit&time&bit&time&bit\\
\hline
20	&-	&-	&0.06	&119	&0.07	&124		&-	&-	&0.29	&158	&0.58	&186\\
30	&-	&-	&0.16	&173	&0.13	&185		&0.11	&140	&0.84	&169	&1.46	&175\\
50	&-	&-	&0.24	&253	&0.25	&256		&0.31	&178	&1.43	&179	&2.77	&224\\
100	&-	&-	&0.53	&431	&0.44	&401		&-	&-	&3.98	&307	&6.38	&302\\
\end{tabular}
\end{center}}
\caption{Exact SOBS decompositions for polynomials with ten variables}\label{exact:10}
\end{table}

\section{Conclusions}
\label{sec:concl}
In this paper, we provide a constructive proof that each SONC cone admits a SOC representation. 
Based on this, we propose an algorithm to compute a lower bound for unconstrained POPs via SOCP. 
Numerical experiments demonstrate the efficiency of our algorithm even when the number of variables and the degree are fairly large. 
Even though the complexity of our algorithm depends on the degree in theory, 
it turns out that this dependency is rather mild. 
For all numerical examples tested in this paper, the running time is below one minute even for polynomials of degree up to $60$.

Since the running time is satisfactory, the main concern of SONC-based algorithms for sparse polynomial optimization might be the quality of obtained lower bounds. 
For many examples tested in this paper, the relative optimality gap is within $1\%$. However, 
it can happen that the SONC lower bound is not accurate and this cannot be avoided by computing an optimal simplex cover. 
To improve the quality of such bounds, it is mandatory to find more complex representations of nonnegative polynomials, which involve SONC polynomials. We will investigate it in the future.

Another line of research is to extend our SONC-SOCP framework to constrained polynomial optimization. Note that constrained polynomial optimization based on the SAGE certificate has been studied in \cite{murray2020signomial}. It is also worth investigating how the SOCP methodology works in the SAGE context.


\paragraph{\textbf{Acknowledgements}} 
Both authors were supported by the Tremplin ERC Stg Grant ANR-18-ERC2-0004-01 (T-COPS project).
The second author was supported by the FMJH Program PGMO (EPICS project) and  EDF, Thales, Orange et Criteo.
This work has benefited from  the European Union's Horizon 2020 research and innovation programme under the Marie Sklodowska-Curie Actions, grant agreement 813211 (POEMA) as well as from the AI Interdisciplinary Institute ANITI funding, through the French ``Investing for the Future PIA3'' program under the Grant agreement n$^{\circ}$ANR-19-PI3A-0004.
\bibliographystyle{plainnat}
\bibliography{refer}

\end{document}